\documentclass[12pt]{amsart}

\usepackage{amsthm}
\usepackage{mathscinet}
\usepackage{amssymb}
\usepackage{graphicx}									 %Include graphics
\usepackage{stmaryrd}                  %Includes \mapsfrom
\usepackage{footmisc}                  %Something for footnotes
\usepackage{paralist}
\usepackage{wrapfig}
\usepackage{hyperref}
\usepackage[draft]{pdfcomment}

\usepackage[ngerman,british]{babel}
\usepackage[arrow, matrix, curve]{xy}
\usepackage{color}

\usepackage[T1]{fontenc}
\usepackage[utf8]{inputenc}

\usepackage{enumitem}
\usepackage{fancyvrb}

%%% THEOREM Environments -------------------------------------------------
%%%-----------------------------------------------------------------------
\newtheorem*{thm-plain}{Theorem}

\newtheorem{theorem}{Theorem}[section]

\newtheorem{lemma}[theorem]{Lemma}
\newtheorem{corollary}[theorem]{Corollary}
\newtheorem{proposition}[theorem]{Proposition}
\newtheorem{definition}[theorem]{Definition}

\theoremstyle{remark}

\newtheorem*{question}{Question}
\newtheorem{remark}{Remark}[section]
\newtheorem{example}{Example}[section]

%%% Makros ---------------------------------------------------------------
%%%-----------------------------------------------------------------------

\newcommand{\R}{\mathbb{R}}
\newcommand{\N}{\mathbb{N}}

\newcommand{\Alex}{\mathrm{Alex}}
\newcommand{\Aff}{\mathcal{A}}

% Arrows

\newcommand{\To}{\rightarrow}

\newcommand{\MTo}{\mapsto}

\newcommand{\diam}{\operatorname{diam}}
\newcommand{\im}{\operatorname{im}}

\newcommand{\interior}{\operatorname{interior}}

\setlength\textwidth{\textwidth+90pt}
\setlength\oddsidemargin{\oddsidemargin-45pt}
\setlength\evensidemargin{\evensidemargin-45pt}

%%% ----------------------------------------------------------------------
\title[Affine functions on Alexandrov spaces]{Affine functions on Alexandrov spaces}
\author{Christian Lange}
\author{Stephan Stadler}
\address{Christian Lange, Mathematisches Institut der Universit\"at zu K\"oln, Weyertal 86-90, 50931 K\"oln, Germany}
\email{clange@math.uni-koeln.de}
\address{Stephan Stadler, Mathematisches Institut der Universität München, Theresienstr. 39, 80333 München, Germany}
\email{stephan.stadler@math.lmu.de}

\subjclass{51F99, 28C99, 53C20}

\begin{document}

\begin{abstract} We show that every finite-dimensional Alexandrov space X with curvature bounded from below embeds canonically into a product of an Alexandrov space with the same curvature bound 
and a Euclidean space such that each affine function on X comes from an affine function on the Euclidean space.
\end{abstract}
\maketitle	

\section{Introduction}

According to a classical result of Toponogov \cite{T64}, a complete Riemannian manifold $M$ with nonnegative sectional 
curvature that contains a straight line is isometric to the metric product of a nonnegatively curved manifold and a line (see also \cite{C36} for earlier results by Cohn-Vossen in the case of surfaces). In this case the Busemann function 
associated with the straight line is an \emph{affine function}, that is, its restriction to any unit-speed geodesic is 
affine. For nonnegatively curved Alexandrov spaces the same result was obtained by Milka \cite{Mi67}. In general, a map between geodesic metric spaces
is called affine, if it sends unit-speed geodesic segments to geodesics of constant speed. In the special case of $Y=\R$ this definition reads as follows.

\begin{definition} A function $f:X\To \R$ on a geodesic metric space $X$ is affine, if the restriction $f\circ \gamma$ to each geodesic $\gamma : [a,b] \To X$ is affine, i.e. it satisfies $(f\circ \gamma)''=0$.
\end{definition}

The easiest example of an affine function is the projection onto a Euclidean factor. Under some assumptions it is known that a space $X$ with one-sided curvature bound splits as a product $X = Y \times \R$ if it admits a non-constant affine function \cite{AB05}  (see also \cite{I82, Mashiko,Mashiko2} for earlier results). The decisive assumption in 
\cite{AB05} is that the space $X$ is geodesically
 complete in the case of an upper curvature bound or does not have boundary in the case of a lower curvature bound. The example of a Euclidean ball shows that such a splitting cannot exist without this assumption.
In this case the best one can expect is an isometric embedding of $X$ into the product of some space with a real line \cite{LS07}. For an upper curvature bound the existence of such an 
embedding is established in \cite{LS07}. In this paper we treat the case of a lower curvature bound. First, we prove 
the following regularity result.

\begin{theorem}\label{thm:Lip_cont} A measurable affine function on a finite-dimensional Alexandrov space with curvature bounded from below is Lipschitz continuous.
\end{theorem}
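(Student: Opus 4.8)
The plan is to deduce everything from two facts: that measurability forces local boundedness, and that the affine condition along geodesics is rigid. Throughout, $X$ is $n$-dimensional with $\operatorname{curv}\ge\kappa$, and I will freely use that the Hausdorff measure $\mathcal{H}^n$ is positive and finite on balls and obeys Bishop--Gromov comparison, that $\mathcal{H}^n\times\mathcal{H}^n$-almost every pair of points is joined by a unique geodesic depending measurably on its endpoints, that sufficiently close points are joined by a unique geodesic, and --- most importantly --- that geodesic interpolation, extension and reflection maps distort $\mathcal{H}^n$ only by a factor controlled by $\kappa$ and the scale (a measure contraction property), while away from an $\mathcal{H}^n$-null set geodesics extend a definite proportion of their length.

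\emph{Steps 1--2 (from measurable to a.e.\ Lipschitz).} Fix $p\in X$ and a small $r_0$. Since $f$ is finite a.e., choose $M$ with $\mathcal{H}^n(B(p,3r_0)\setminus A)<\varepsilon$, where $A:=\{|f|\le M\}$ and $\varepsilon>0$ is small, to be fixed. If $x,y\in A$ are joined by a geodesic, then $f$ is affine along it, so $|f|\le M$ there; in particular $|f(m(x,y))|\le M$ for the midpoint $m(x,y)$. Now suppose $z\in B(p,r_0)$ satisfied $|f(z)|>3M$: then $z$ could be the midpoint of no geodesic with endpoints in $A$. For $\mathcal{H}^n$-a.e.\ such $z$ the ``reflection through $z$'' map $R_z$, sending $x$ to the point at arclength $2d(x,z)$ on the extension of $[xz]$, is defined for $\mathcal{H}^n$-a.e.\ $x\in B(z,r_0)$ and distorts $\mathcal{H}^n$ by at most a factor $C=C(\kappa,r_0)$; hence $B(z,r_0)\subseteq(X\setminus A)\cup R_z^{-1}(X\setminus A)$ up to a null set, so $\mathcal{H}^n(B(z,r_0))\le(1+C)\varepsilon$, contradicting the uniform lower volume bound once $\varepsilon$ is chosen small. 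Thus $\{|f|>3M\}\cap B(p,r_0)$ is $\mathcal{H}^n$-null, i.e.\ $f$ is essentially bounded, by some $M'$, on $B(p,r_0)$. Next, for $\mathcal{H}^n\times\mathcal{H}^n$-a.e.\ pair $x,y\in B(p,r_0/4)$ the geodesic $[xy]$ is unique and extends past $y$, inside $B(p,r_0)$, to a point $y^+$ with $d(y,y^+)=\eta r_0$ for a fixed small $\eta$. Since $f$ is affine on $[xy^+]$, and $|f(x)|,|f(y^+)|\le M'$ for a.e.\ such pair (the map $(x,y)\mapsto y^+$ distorts $\mathcal{H}^n$ boundedly and $\{|f|>M'\}$ is null), the slope of $f$ along $[xy^+]$ is at most $2M'/(\eta r_0)$; hence $|f(x)-f(y)|\le (2M'/\eta r_0)\,d(x,y)$ for a.e.\ such pair. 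So $f$ agrees $\mathcal{H}^n$-a.e.\ on $B(p,r_0/4)$ with a Lipschitz function $\tilde f$.

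\emph{Step 3 (from a.e.\ to everywhere).} Fix $q\in B(p,r_0/8)$ and a small $\rho$. The midpoint map $x\mapsto m(q,x)$ is continuous on $B(q,\rho)$ and sends the full-measure set $\{f=\tilde f\}\cap B(q,\rho)$ onto a positive-measure subset of $B(p,r_0/4)$, which therefore meets $\{f=\tilde f\}$. Hence there is $x\in B(q,\rho)$ with $x\in\{f=\tilde f\}$ and $m(q,x)\in\{f=\tilde f\}$; since $f$ is affine on $[qx]$ we get $f(q)=2f(m(q,x))-f(x)=2\tilde f(m(q,x))-\tilde f(x)$, and letting $\rho\to0$ and using continuity of $\tilde f$ yields $f(q)=\tilde f(q)$. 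So $f=\tilde f$ near $p$; as $p$ was arbitrary and the constants above are uniform on bounded regions, $f$ is locally Lipschitz. (Once $f$ is known to be continuous a cleaner endgame is available: $f$ and $-f$ are then $0$-concave, so $f$ is affine along every quasigeodesic, and since quasigeodesics extend indefinitely one may run the slope estimate along a quasigeodesic through $x$ and $y$ extended a definite amount inside a ball on which $f$ is now genuinely bounded.)

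\emph{Main obstacle.} The delicate point is the geometric input feeding Steps 1--2: that, away from an $\mathcal{H}^n$-null set, geodesics extend a fixed proportion of their length and the attendant reflection and extension maps distort $\mathcal{H}^n$ by a factor uniform on the relevant scale. This rests on the structure theory of Alexandrov spaces --- genericity of regular points, the measure contraction property, and the behaviour of geodesics near the boundary and near lower-dimensional extremal subsets (near the boundary one must in addition arrange the extension steps, e.g.\ by first working in the interior and then extending $f$ by continuity, or by a reflection argument in the double of $X$). The remaining measure-theoretic bookkeeping, and the passage from ``almost everywhere'' to ``everywhere'', are then routine.
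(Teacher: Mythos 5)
Your overall strategy (Steinhaus-type: measurability forces essential boundedness, then an a.e.\ slope bound via geodesic extension, then upgrade to everywhere via midpoints) is genuinely different from the paper's, but it rests on geometric inputs that are not available in Alexandrov spaces, and you have correctly identified but not resolved them. Concretely: (1) Geodesics in an Alexandrov space need not be extendable, not even locally and not even at interior points -- this is precisely the defect that the theory of quasigeodesics was invented to repair -- and near the boundary a \emph{positive-measure} family of geodesics simply terminates, so your reflection map $R_z$ and extension map $(x,y)\mapsto y^+$ are not defined ``up to a null set'' in the generality you need. (2) Even where these maps are defined, the measure-distortion control you invoke goes the wrong way. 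Toponogov comparison makes contractions towards a point \emph{co-Lipschitz} (this is the one distortion estimate that is genuinely available, and it is the one the paper uses); consequently extensions away from a point are \emph{Lipschitz}, and the comparison-angle identities bound $\tilde\angle x_2 z x_1'$ from \emph{above} by $\pi-\tilde\angle x_1 z x_2$, not from below. So neither $\mu_n\bigl(R_z^{-1}(X\setminus A)\bigr)\le C\varepsilon$ nor ``$|f(y^+)|\le M'$ for a.e.\ pair'' (which needs preimages of null sets under the extension to be null, i.e.\ images of null sets under the contraction to be null) follows; both would require the reflection/contraction to be co-Lipschitz in the opposite sense, which fails in general. (3) Your proposed patches do not close the gap: extending $f$ ``by continuity'' from the interior presupposes the continuity you are trying to prove, and an affine function on $X$ is in general not affine on the double of $X$ (already $f(x)=x$ on $[0,1]$ fails on the doubled interval), so reflection in the double is unavailable. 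Step 3 is fine, but it sits on top of Steps 1--2.

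For comparison, the paper avoids all of this by exploiting convexity rather than extension. Since $f$ is affine, every fiber $f^{-1}(t)$ and every set $f^{-1}(I)$ ($I$ convex) is convex; a density argument using only contractions towards a point (co-Lipschitz, hence the usable direction of measure distortion) shows that a dense convex set of positive measure has full measure, that each fiber is null, hence that no fiber is dense, hence that fibers are closed, hence that $f$ is continuous. Local Lipschitz continuity in the interior is then Petrunin's result for functions that are both $0$-concave and $0$-convex, and the global Lipschitz bound up to the boundary is obtained by running the slope argument along \emph{quasigeodesics}, which -- unlike geodesics -- always extend and along which a continuous affine function is still affine. If you want to salvage your approach, the place to look is exactly this substitution of quasigeodesics for geodesic extension; but then you need continuity first, which is the content of the paper's measure-theoretic Section 3.
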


The measurability assumption could be dropped if one of the following open questions due to Lytchak admitted a positive solution (cf. Section \ref{sub:continuity}).

\begin{question} Is a dense convex subset of an $n$-dimensional Alexandrov space with curvature bounded from below measurable with respect to the $n$-dimensional Hausdorff measure? Does its complement have measure zero?
\end{question}

In dimension two the answers are yes (see also \cite{Mashiko} for a proof of Lipschitz continuity of affine functions on Alexandrov surfaces without boundary).

In the following we restrict ourselves 
to Lipschitz continuous affine functions. As in \cite{LS07} we choose a slightly more general formulation that takes 
into account all affine functions at once. We denote by $p_Y$ and $p_H$ the natural projections from 
the product $Y \times \R^m$ onto the factors $Y$ and $\R^m$, respectively.

\begin{theorem}\label{thm:char_affine} Let $X \in \Alex^n(\kappa)$, that is an $n$-dimensional Alexandrov space with curvature bounded from below by $\kappa$. Then there exists a geodesic metric space $Y$ and an isometric embedding $i:X\To Y\times \R^{m}$, $m\leq n$, with the following properties: 
\begin{enumerate}[label=(\alph*)]
\item The projection $\pi =p_Y \circ i : X \To Y$, is surjective.
\item Each Lipschitz continuous affine function $f:X\To \R$ factors as $f=\hat{f}\circ p_H \circ i$ where $\hat{f}:\R^{m}\To \R$ is affine. In particular, each Lipschitz continuous affine function on $Y$ is constant.
\item Each isometry of $X$ extends uniquely to an isometry of $Y\times \R^{m}$.
\item The embedding $i$ is open on an open neighborhood of the regular points in $X$.
\end{enumerate}
Moreover, the listed properties uniquely determine the dimension $m$, the space $Y$ up to isometry and the embedding $i:X\To Y\times \R^{m}$ up to composition with isometries.
\end{theorem}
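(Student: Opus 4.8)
The plan is to build $Y$ and $i$ from the space of affine functions, imitating the upper-curvature-bound construction in \cite{LS07} but using lower-curvature-bound technology. First I would consider the vector space $\Aff(X)$ of Lipschitz continuous affine functions on $X$ modulo constants. Evaluation of the differentials along geodesics makes $\Aff(X)$ into a space with a natural semi-norm: for $f \in \Aff(X)$ the quantity $\|f\| := \sup_\gamma |(f\circ\gamma)'|$, the supremum over unit-speed geodesics. The key structural input is that this semi-norm is Euclidean and $\Aff(X)$ is finite-dimensional of dimension $m \le n$. Finite-dimensionality follows because at a regular point $p \in X$ with tangent cone $\R^n$ the differentials $d_pf$ are linear functionals on $\R^n$, so $\dim\Aff(X) \le n$; and the fact that an affine function with vanishing differential at one point is constant (which uses connectedness of $X$ by regular geodesics, i.e. that geodesics between generic points avoid the singular set, or alternatively a direct continuity argument) shows that $f \mapsto d_pf$ is injective. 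That the semi-norm is a genuine Euclidean norm is the heart of the matter: one shows that for $f, g \in \Aff(X)$ the function $fg$ — more precisely the second-order behaviour of $(f,g)\colon X \to \R^2$ — forces the image of any geodesic under $(f,g)$ to be an affinely parametrized line, so that $(f,g)$ is itself an affine map into a Euclidean plane; applying the nonnegatively-curved splitting theorem of Milka \cite{Mi67} locally, or the argument of \cite{AB05}, to the relevant sub-level structure yields the parallelogram law for $\|\cdot\|$. This is the step I expect to be the main obstacle — controlling the \emph{interaction} of two affine functions and extracting an inner product requires precisely the lower curvature bound and is where the Alexandrov comparison geometry enters; everything else is bookkeeping.

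Once $\Aff(X) \cong \R^m$ isometrically is established, let $A\colon X \to \R^m$ be the map $x \mapsto (f_1(x), \dots, f_m(x))$ for an orthonormal basis $f_1, \dots, f_m$ of $\Aff(X)$ (normalized so that some base point maps to the origin). Then $A$ is affine and $1$-Lipschitz along geodesics in the coordinate directions, and I claim the map $i := (\pi, A)\colon X \to Y \times \R^m$ is an isometric embedding once $Y$ is defined correctly. The natural definition is $Y := X / \!\sim$ where $x \sim x'$ iff $A$ maps a geodesic from $x$ to $x'$ to a single point — equivalently, $Y$ is the quotient by the "directions in which all affine functions are constant." To see this is a metric space and that $\pi$ is a submetry fitting into a metric product decomposition, I would verify the infinitesimal product structure: at each point $p$, the tangent cone $T_pX$ splits off an $\R^{m'}$, $m' \le m$, spanned by the gradients of the $f_i$ at $p$, and a compactness/semicontinuity argument shows $m' = m$ generically, which forces $X$ to sit inside an honest metric product. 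The distance on $Y$ is then $d_Y(\pi x, \pi x') := \sqrt{d_X(x,x')^2 - |A(x)-A(x')|^2}$; affineness of the $f_i$ and the first variation formula guarantee the expression under the root is nonnegative and that $d_Y$ satisfies the triangle inequality. Surjectivity of $\pi$ (item (a)) is immediate, and the curvature bound $\kappa$ on $Y$ follows because $Y$ is, locally, a quotient/factor of $X$ by a parallel Euclidean field — splitting off a Euclidean factor preserves the lower curvature bound.

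For item (b): by construction every $f \in \Aff(X)$ is a linear combination of the $f_i$, hence $f = \hat f \circ p_H \circ i$ with $\hat f$ the corresponding affine functional on $\R^m$; conversely any Lipschitz affine function on $Y$ pulls back to one on $X$ that is constant in the $\R^m$ directions, hence lies in $\Aff(X)$ and is constant in the $\R^{m'}=\R^m$ directions — but it is also constant along the $Y$-directions by definition of the equivalence relation, so it is constant. For item (c), an isometry $\phi$ of $X$ acts linearly on $\Aff(X)$ preserving the Euclidean norm (since it permutes geodesics), hence induces an isometry $\hat\phi$ of $\R^m$, and $\phi$ descends to an isometry $\bar\phi$ of $Y$ because it preserves the equivalence relation; then $(\bar\phi, \hat\phi)$ is the desired extension, and uniqueness follows since $i(X)$ is not contained in any proper product sub-slab — indeed $i(X)$ projects onto $Y$ and its $\R^m$-shadow has nonempty interior by item (d). Item (d) is a consequence of Theorem \ref{thm:Lip_cont} and the structure of regular points: near a regular point $p$, $X$ is bi-Lipschitz to $\R^n$ and $i$ restricted there is, up to the splitting, the inclusion of an open set, so $i$ is open there. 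Finally, for the uniqueness clause: any embedding $i'\colon X \to Y' \times \R^{m'}$ with properties (a)–(d) has $p_{H'}\circ i'$ affine in each coordinate, so its coordinates lie in $\Aff(X)$ and span it (else a nonconstant affine function on $Y'$ would exist, contradicting (b)); hence $m' = m$ and after an isometry of $\R^m$ the $\R^m$-components of $i$ and $i'$ agree, which then forces the $Y$-components to agree up to isometry by the metric product structure. The one genuinely delicate point throughout — beyond the norm-from-two-functions issue flagged above — is verifying that $d_Y$ as defined is actually a metric (nonnegativity and triangle inequality), which I would handle via the first variation formula together with the fact that $A$ restricted to any geodesic is an affinely parametrized straight segment in $\R^m$.
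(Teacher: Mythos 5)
Your overall strategy coincides with the paper's: take the evaluation map $E_o\colon X\to\Aff(X)^*$, show that $\Aff(X)$ is a Hilbert space of dimension $m\le n$, prove that $\tilde d(y,z)=\sqrt{d(y,z)^2-\|E_o(y)-E_o(z)\|^2}$ is a pseudometric via the first variation formula, and let $Y$ be the associated metric space. However, there are three concrete problems. First, your definition of the equivalence relation is wrong: you declare $x\sim x'$ when $A$ maps a geodesic from $x$ to $x'$ to a single point, but with the distance formula you write down two sentences later, $x\sim x'$ holds exactly when $d(x,x')=|A(x)-A(x')|$, i.e.\ when $A$ maps $[xx']$ \emph{isometrically} onto a segment. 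Your stated relation would collapse the fibers of $A$ and produce (essentially) the image of $A$ in $\R^m$ rather than $Y$; the two descriptions in your text contradict each other.

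Second, your argument that the norm on $\Aff(X)$ is Euclidean does not work as described. The image of a geodesic under $(f,g)$ is automatically an affinely parametrized segment in $\R^2$ (each coordinate is an affine function of arclength), so that observation yields nothing, and it is unclear how Milka's splitting theorem could be applied ``locally'' to a space that need not contain any line (a Euclidean ball, say). The paper instead proves that for every interior point $x$ the map $[f]\mapsto[d_xf]$ is an \emph{isometric} embedding of $\Aff(X)$ into $\Aff(T_xX)$ --- which requires $\|d_xf\|=\|f\|$, and that in turn rests on the quasigeodesic argument showing the absolute gradient $|\nabla_xf|$ is constant on the interior (Corollary \ref{cor:abs_grad}) --- and then takes $x$ regular, so that $\Aff(T_xX)=\Aff(\R^n)$ is visibly a Hilbert space of dimension at most $n$. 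Third, and most seriously, property (d) is not a consequence of $X$ being bi-Lipschitz to $\R^n$ near a regular point: openness of $i$ means that $i(B_r(x))$ contains a ball of $Y\times\R^m$, so one must show that every point of a product neighborhood of $i(x)$ is actually hit. The paper spends the bulk of its Section 5 on exactly this: it identifies $T_x[x]$ and $T_xZ_x$ with the two factors of the splitting of $T_xX$ (Lemma \ref{lem:alex_dim}), proves Hausdorff continuity of the fibers $[x_i]\to[x]$ (Lemma \ref{lem:Hausdorffcont}), uses boundary-avoidance of geodesics together with the domain invariance theorem to obtain the local product structure (Lemma \ref{lem:fiber_structure}), and only then concludes that $\pi$ maps $B_r(x)\cap Z_x$ onto $B_r(\pi(x))$ (Lemma \ref{lem:pi_open}). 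None of this is visible in your sketch, and without it neither (d) nor your uniqueness argument for the extension in (c) (which relies on $i(X)$ having nonempty interior in $Y\times\R^m$) is justified.
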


In particular, if $X \in \Alex^n(\kappa)$ admits a non-constant measurable affine function, then $\kappa\leq 0$.
Compared to \cite{LS07} we additionally obtain that the embedding $i$ is open on an open and dense subset of $X$ and that the data $i:X\To Y\times \R^{m}$ are uniquely determined by $X$ and the properties stated in the theorem.

\begin{remark} Given Theorem \ref{thm:curvature_bound} below and a domain invariance theorem for Alexandrov spaces (cf. \cite{BIP10}), property $(d)$ in Theorem \ref{thm:char_affine} actually holds on the set of all interior points of $X$.
\end{remark}

The fibers of the projection $P_H= p_H \circ i: X \To \R^m$ consist of points that
 cannot be separated by Lipschitz continuous affine functions. Each of these fibers is a convex subset of $X$ and projects isometrically into 
$Y$. The whole space $Y$ is made up of such pieces. As in \cite{LS07} this picture suggests that $Y$ has the same curvature bound as $X$. 
Indeed, using a recent result of Petrunin \cite{Pe15} we prove the following statement.

\begin{theorem}\label{thm:curvature_bound} Let $X \in \Alex^n(\kappa)$. Then the completion of the space  $Y$ constructed 
in Theorem \ref{thm:char_affine} has the same curvature bound as $X$. More precisely, $\bar Y \in \Alex^{n-m}(\kappa)$ with 
$m$ being given as in Theorem \ref{thm:char_affine}.
\end{theorem}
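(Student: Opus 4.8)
The plan is to establish the curvature bound first on a dense open part of $Y$, exploiting the local product structure near the regular points of $X$ furnished by Theorem~\ref{thm:char_affine}(d), and then to upgrade it to the whole completion $\bar Y$ by invoking the result of Petrunin \cite{Pe15}. If $m=0$ then $Y=X$ and there is nothing to prove, so I would assume $m\geq 1$ from the outset. Then $X$ carries a non-constant Lipschitz affine function (a non-constant coordinate of $P_H=p_H\circ i$ provides one), so $\kappa\leq 0$; in particular a metric product of an Alexandrov space of curvature $\geq\kappa$ with a Euclidean factor again has curvature $\geq\kappa$, so the assertion is at least consistent.

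First I would treat the regular locus. Let $p\in X$ be a regular point. By Theorem~\ref{thm:char_affine}(d) the map $i$ is open on a neighbourhood of $p$, and since $i$ is isometric this identifies a neighbourhood of $p$ with an open subset of the metric product $Y\times\R^{m}$; concretely, small balls of $Y\times\R^{m}$ around $i(p)$ become isometric to small balls of $X$ around $p$. As an open subset of a finite-dimensional Alexandrov space of curvature $\geq\kappa$ is locally of curvature $\geq\kappa$, the product $Y\times\R^{m}$ is locally of curvature $\geq\kappa$ near $i(p)$. Now in any metric product $Z\times\R^{m}$ each slice $Z\times\{q\}$ is convex, since a shortest path between two points with the same $\R^{m}$-coordinate keeps that coordinate constant, and a convex subset of a space of curvature $\geq\kappa$ has curvature $\geq\kappa$. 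Restricting the comparison for small triangles of $Y\times\R^{m}$ near $i(p)$ to the slice $Y\times\{q\}$, and noting that $\dim Y+m=\dim(Y\times\R^{m})=\dim X=n$, I would conclude that near $\pi(p)=p_Y(i(p))$ the space $Y$ is an $(n-m)$-dimensional Alexandrov space of curvature $\geq\kappa$. Since the regular points are dense in $X$ and $\pi$ is continuous and onto, the set $Y^{\circ}\subseteq Y$ of points near which this holds is open and dense.

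Next I would bring in the global picture. By Theorem~\ref{thm:char_affine}(a) and the discussion preceding it, $Y=\bigcup_{h}\pi(F_{h})$, where $F_{h}=P_H^{-1}(h)$ runs over the convex fibres of $P_H$, each $F_{h}$ is an Alexandrov space of curvature $\geq\kappa$ of dimension at most $n-m$, and $\pi$ maps $F_{h}$ isometrically onto $\pi(F_{h})$. Thus $\bar Y$ is a complete length space which is covered by isometric copies of Alexandrov spaces of curvature $\geq\kappa$ and which already carries the lower curvature bound near the dense open set $Y^{\circ}$, its ``bad'' locus being the image of the non-manifold part of $X$ under a map which over the regular locus is the projection of a metric product, hence has flat totally geodesic fibres there. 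The theorem of Petrunin \cite{Pe15} is precisely what lets one deduce from such data that $\bar Y$ itself has curvature $\geq\kappa$; together with the dimension count this gives $\bar Y\in\Alex^{n-m}(\kappa)$.

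I expect this last step to be the main obstacle. The naive statement that a complete length space which is locally Alexandrov of curvature $\geq\kappa$ on a dense open subset must be Alexandrov of curvature $\geq\kappa$ is false (a flat cone over a circle of length greater than $2\pi$ is a counterexample), so one genuinely has to use the extra structure supplied by Theorem~\ref{thm:char_affine}: the fibres of $\pi$ are convex and flat, and over the regular locus $\pi$ is the projection of a metric product. Matching this to the hypotheses of \cite{Pe15}, and in particular verifying the required convexity properties inside $\bar Y$ rather than merely inside $X$, is where the real work would lie.
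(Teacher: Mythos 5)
Your local step is sound and essentially coincides with the paper's: via Corollary \ref{cor:i_open} (equivalently Lemma \ref{lem:pi_open}) a small ball in $Y$ around $\pi(p)$, for $p$ regular, is isometric to a genuine subset of the Alexandrov space $X$ (namely $B_r(p)\cap Z_p$), so the comparison holds there, and the dimension count $\dim Y = n-m$ comes out as you say. The gap is exactly where you flag it: you never say how Petrunin's theorem actually applies, and the data you propose to feed it --- a covering of $Y$ by isometric copies of the convex fibres $\pi(Z_x)$ together with the curvature bound on a dense open subset --- is not the hypothesis of \cite{Pe15}. That theorem is a globalization statement for a \emph{geodesic} (possibly non-complete) space in which the comparison holds in a neighbourhood of \emph{every point of that space}; it says nothing about a complete space carrying the bound only on a dense open subset, which is precisely why your cone example worries you. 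As written, the final step is an acknowledged unresolved obstacle, not a proof.

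The missing idea is to apply \cite{Pe15} not to $Y$ or $\bar Y$ but to $Y_0:=\pi(R)$, where $R\subset X$ is the set of regular points of the Alexandrov space $X$. By \cite[Corollary~1.10]{Pe98} the set $R$ is \emph{convex} in $X$ (and dense); since $\pi$ sends geodesics of $X$ to minimizing geodesics of $Y$ (Lemma \ref{lem:prop1}), any two points of $Y_0$ are joined by a geodesic of $Y$ lying entirely in $Y_0$, so $Y_0$ is itself a geodesic metric space, dense in $Y$. Every point of $Y_0$ has, by Lemma \ref{lem:pi_open} together with Perelman's existence of small convex neighbourhoods in $Z_x$, a neighbourhood in $Y$ isometric to a convex subset of some $Z_x$, hence the comparison holds near every point of $Y_0$. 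Now \cite{Pe15} applies verbatim to the geodesic space $Y_0$ and yields that its completion --- which equals $\bar Y$ by density --- lies in $\Alex^{n-m}(\kappa)$, the dimension coming from Lemma \ref{lem:alex_dim} and Lemma \ref{lem:pi_open}. So the ``real work'' you defer is exactly the convexity of the regular set; without invoking it (or some substitute that makes a dense, locally $\Alex(\kappa)$ subset of $Y$ geodesic in its own right) the argument does not close.
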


The proof of Theorem \ref{thm:char_affine} is based on the same idea as the proof of the analogous result in \cite{LS07} 
in the case of an upper curvature bound. If the conclusion of Theorem \ref{thm:char_affine} holds, then the metric on $Y$ must satisfy $ d(\pi(y),\pi(z))= \sqrt{d(y,z)^2-||F(y)-F(z)||^2}$ 
for all $y,z \in X$ where $F= p_H \circ i:X \To \R^m$. In particular, the right hand side of this expression must be a 
pseudometric on $X$ with $Y$ being the corresponding metric space. To prove \ref{thm:char_affine} we first define some 
Hilbert space $H$ and a natural map $F:X \To H$ that would coincide with $p_H \circ i$ if the theorem were true. Then 
we show that the term $\sqrt{d(y,z)^2-||F(y)-F(z)||^2}$ defines a pseudometric on $X$. These steps work similarly as in 
\cite{LS07} but require different ingredients. For instance, our proof relies on the theory of quasigeodesics \cite{QG95, Pe07}. The dimension of $H$ then turns out to be finite (cf. Lemma \ref{lem:affine_hilbert}). The proof 
of the openess statement in Theorem \ref{thm:char_affine}, $(d)$, is based on the fact that finite-dimensional 
Alexandrov spaces are locally Euclidean on an open and dense subset and uses the domain invariance theorem. 
This property, together with Petrunin's result \cite{Pe15}, is needed in the proof of Theorem \ref{thm:curvature_bound}. The listed ingredients restrict our proof to finite dimensions.
\begin{question} Do the results of this paper continue to hold for infinite-dimensional Alexandrov spaces with curvature bounded from below?
\end{question}

\section{Preliminaries}

\subsection{Spaces}\label{sub:spaces}

By $d$ we denote the distance in metric spaces without an extra reference to the space. By $B_r(x)$ we denote
 the open metric ball of radius $r$ around a point $x$. A \emph{pseudometric} is a metric for which the distance 
between different points may be zero. Identifying points with pseudo distance equal to zero yields a metric space. 

A \emph{geodesic} in a metric space is a length minimizing curve parametrized proportionally to arclength. A 
metric space is \emph{geodesic} if each pair of points is connected by a geodesic. A subspace of a geodesic 
space is called \emph{(totally) convex} if it contains every geodesic between pairs of its points. An Alexandrov space 
with curvature $\geq \kappa$ is a complete, geodesic metric space in which triangles are not thinner than in 
the two-dimensional model space of constant curvature $\kappa$. The Hausdorff 
dimension of such a space is an integer or infinite. We denote the set of $n$-dimensional Alexandrov spaces with curvature $\geq \kappa$ 
by $\Alex^n(\kappa)$ and consider only finite-dimensional spaces. We refer to \cite{MR1835418,BGP92} for a more detailed discussion of such spaces. We will 
need the following estimate that is a direct consequence of the $\Alex(\kappa)$ definition and hyperbolic trigonometry.
 There exist numbers $A=A(\kappa)$, $r=r(\kappa)$ such that for each triple $x_1,x_2,x_3$ in a space $X \in \Alex(\kappa)$ 
with $d(x_i,x_j)\leq r$ the following holds. Let $m_i$ be the midpoint between $x_j$ and $x_k$ ($j \neq i \neq k \neq j$). 
\begin{itemize}
\item If $d(x_1,x_2),d(x_1,x_3)\leq 2t$, then $d(m_2,m_3)\geq\frac{1}{2}d(x_2,x_3)(1-At^2)$.
\end{itemize}
Note that for $\kappa \geq 0$ we may take $A=0$.

\subsection{Space of directions and tangent cone}\label{sub:tangent_cones} For two points $x$ and $y$ in an Alexandrov space $X$ we
 denote a geodesic between $x$ and $y$ by $[xy]$ and the interior of this geodesic by $(xy)$. Such a geodesic always exists, but is in general not uniquely determined by $x$ and $y$. We put
\[
	\Sigma_x' := \{[xy] | y \in X \backslash \{y\} \}/ \sim.
\]
where the equivalence relation is defined such that $[xy] \sim [xz]$ if and only if $[xy] \subset [xz]$ 
or $[xz] \subset [xy]$. Note in this regard that geodesics in $X$ cannot branch. Measurement of angles defines a metric on $\Sigma_x'$ \cite[§4.3]{MR1835418}. The \emph{space of directions} $\Sigma_x$ is defined to be the metric completion
 of $\Sigma_x'$. The metric cone over the space of directions is called the \emph{tangent cone} of $X$ 
at $x$ and is denoted by $T_x$ or $T_x X$. Alternatively, the tangent cone $T_x$ can be defined as pointed Gromov-Hausdorff limit of rescaled 
versions of $X$ \cite[Theorem 7.8.1]{BGP92}. If  $X \in \Alex^{n}(\kappa)$, then $\Sigma_x \in \Alex^{n-1}(1)$ 
and $T_x \in \Alex^{n}(0)$. Due to a result of Perelman, every point in a finite-dimensional Alexandrov space has a neighborhood that is pointed homeomorphic to the tangent cone at that point (cf. \cite[Thm.~10.10.2]{MR1835418}). A point in $X$ is called \emph{regular}, if its tangent cone is isometric to $\R^n$. 
The set of regular points in $X$ is convex \cite[Corollary~1.10]{Pe98} and dense in $X$ \cite[Corollary~10.9.13]{MR1835418}. 
The boundary of an Alexandrov space can be defined inductively via the spaces of directions. A point belongs 
to the boundary if and only if the boundary of its space of directions is non-empty. The \emph{interior} is the complement of the boundary. The interior of an 
$\Alex^n(\kappa)$ space $X$ is open in $X$ \cite[13.3. b)]{BGP92}. A geodesic $\gamma:[a,b) \To X$ that starts in the interior of $X$ stays
 in the interior of $X$ \cite[Thm.~1.1 A]{Pe98}. In particular, the interior of $X$ is convex.

\subsection{$\lambda$-concave functions and quasigeodesics}\label{sub:qg} Let $X \in \Alex^n(\kappa)$ 
and $U$ be an open subset \emph{in the interior} of $X$. 
A continuous function $f$ on $U$ is called \emph{$\lambda$-concave}, if for all unit-speed
 geodesics $\gamma$ in $U$ the function 
\[ 
	f\circ \gamma(t)-\frac{\lambda}{2} t^2
\]
is concave (cf. \cite[Cor.~3.3.2]{Pe15}). A curve $\gamma$ in $U$ is called \emph{quasigeodesic}, if for any $\lambda \in \R$ and any $\lambda$-concave function
 $f$, the composition $f \circ \gamma$ is $\lambda$-concave. Quasigeodesics have nice properties, e.g. they are unit 
speed curves \cite[Thm.~7.3.3]{Pe07} and for any point $x \in X$ and any direction $\xi \in \Sigma_x$ there exists a 
quasigeodesic with $\gamma(0)=x$, $\gamma^+(0)=\xi$. Here $\gamma^+(0)$ is defined to be the limit in $\Sigma_x$ of 
the directions $[x\gamma(t)]$ for $t\searrow 0$ (cf. \cite[Thm.~A.0.1]{Pe07}). Observe that if $f: X \To \R$ is 
affine and continuous in the interior of $X$, then its restriction to a quasigeodesic in the 
interior of $X$ is affine.

\subsection{Space of affine functions}\label{sub:functions} A map $f:X \To Y$ is called \emph{$L$-Lipschitz} if
 it satisfies $d(f(x),f(z))\leq Ld(x,z)$ for all $x,z \in X$. The smallest $L$ as above is called the 
\emph{optimal Lipschitz constant} of $f$. Let $X$ be a geodesic metric space. As in \cite{LS07} we denote by $\tilde\Aff(X)$ the
 vector space of Lipschitz continuous affine functions on $X$ and by $\Aff(X)=\tilde \Aff(X) / \mathrm{Const}(X)$ the quotient vector 
space by the subspace of constant functions. For $f \in \tilde \Aff(X)$ we denote by $[f]$ the corresponding element 
in $\Aff(X)$. The optimal Lipschitz constant defines a norm on $\Aff(X)$ with respect to which $\Aff(X)$ is a Banach space.

The evaluation map $E: X \times X \To \Aff^*$ is defined by $E(x,y)([f])=f(y)-f(x)$ and satisfies $||E(x,y)||\leq d(x,y)$. 
It maps geodesics to linear intervals of the Banach space $\Aff(X)^*$. Observe that $E(x,y)=0$ if and only if the points 
$x$ and $y$ cannot be separated by a Lipschitz continuous affine function on $X$. By $E_x: X \To \Aff^*$ is denoted the
 restriction $E_x(y)([f])=f(y)-f(x)$. The following lemma holds. 

\begin{lemma}[{\cite[3.1]{LS07}}] \label{lem:evaluation}
The evaluation map $E_x:X \To \Aff^*$ is $1$-Lipschitz. For each Lipschitz continuous affine function 
$f\in \tilde \Aff$ we have $[E_x(\cdot)([f])]=[f]$.
\end{lemma}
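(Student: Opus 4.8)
The plan is a direct unwinding of the definitions, since both assertions are essentially formal. First I would check that $E_x$ is well defined, that is, that for fixed $x,y \in X$ the assignment $[f]\mapsto f(y)-f(x)$ is a well-defined bounded linear functional on the Banach space $\Aff(X)$. Linearity is immediate from the formula; independence of the chosen representative $f$ of the class $[f]$ holds because two representatives differ by a constant, which cancels in the difference $f(y)-f(x)$; and boundedness with $||E_x(y)||\leq d(x,y)$ follows from $|f(y)-f(x)|\leq L\cdot d(x,y)$, where $L=||[f]||$ is the optimal Lipschitz constant of the class $[f]$. In particular $E_x$ indeed takes values in $\Aff^*$.

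For the Lipschitz estimate, note that for $y,z \in X$ and $[f]\in\Aff(X)$ one has
\[
	\big(E_x(y)-E_x(z)\big)([f]) = \big(f(y)-f(x)\big)-\big(f(z)-f(x)\big) = f(y)-f(z),
\]
so that $|(E_x(y)-E_x(z))([f])|\leq ||[f]||\cdot d(y,z)$. Taking the supremum over all $[f]$ in the closed unit ball of $\Aff(X)$ yields $||E_x(y)-E_x(z)||\leq d(y,z)$, i.e. $E_x$ is $1$-Lipschitz. (This is the same computation that gives $||E(x,y)||\leq d(x,y)$ for the two-variable evaluation map, applied to the difference $E_x(y)-E_x(z)$.)

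For the second assertion, fix $f\in\tilde\Aff(X)$ and consider the function $g:X\To\R$, $g(y):=E_x(y)([f])=f(y)-f(x)$. Since $f$ is Lipschitz continuous and affine and $g$ differs from $f$ by the constant $f(x)$, the function $g$ is again Lipschitz continuous and affine, hence $g\in\tilde\Aff(X)$, and $[g]=[f]$ in $\Aff(X)$. This is precisely the claim $[E_x(\cdot)([f])]=[f]$.

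There is no genuine obstacle in this lemma; the only point that deserves a line of care is the well-definedness of all objects involved — $E_x(y)$ as an element of the dual Banach space $\Aff^*$, and $y\mapsto E_x(y)([f])$ as an element of $\tilde\Aff(X)$ — which is exactly what the first step settles. Everything else is a one-line manipulation of the defining formula.
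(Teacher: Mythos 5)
Your proof is correct and is exactly the routine verification intended here: the paper itself offers no proof but simply cites \cite[3.1]{LS07}, and your unwinding of the definitions (well-definedness of $E_x(y)$ as a functional, the bound $\lvert f(y)-f(z)\rvert\leq \lVert [f]\rVert\, d(y,z)$ giving $1$-Lipschitzness, and $E_x(\cdot)([f])=f-f(x)$ giving the second claim) matches the argument in that reference. No gaps.
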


\subsection{Measures}

The (n-dimensional) Hausdorff measure on $X\in \Alex^n(\kappa)$ will be denoted by $\mu_n$. 
Recall that for a subset $E\subset X$ we have 
$$
\mu_n(E):=\lim_{\delta\to 0}\inf\{\sum_{i=1}^\infty (\diam A_i)^n :\ E\subset\bigcup_{i=1}^\infty A_i,\ \diam A_i<\delta\}
$$
and that $\mu_n$ is a Borel measure on $X$ (by the same proof as in the Euclidean case, cf. \cite[Sec.~2.1, Thm.~1, Claim \#3]{MR1158660}). 
Since the diameter of a subset of $X$ does not increase when passing to its closure, we can restrict
to closed coverings in the above definition. It follows that for every set $A\subset X$ there exists a Borel 
set $B$ such that $A\subset B$ and $\mu_n(A)=\mu_n(B)$. From the Bishop inequality (\cite[Thm.~10.6.8]{MR1835418}) we get that $\mu_n$ takes finite values on compact subsets of $X$.
So $\mu_n$ is a Radon measure in the sense of \cite{MR1158660}.
For a 
subset $A\subset X$ and a point $x\in X$ we define the density of $A$ at the point $x$ by
$$
\Theta_A(x):=\lim_{r\to 0}\frac{\mu_n(B_r(x)\cap A)}{\omega_n r^n}
$$
if the limit exists. (Here $\omega_n$ denotes the volume of the n-dimensional Euclidean unit ball.)
The Lebesgue density theorem says that for a $\mu_n$-measurable set $A\subset \R^n$ the density $\Theta_A(x)$ 
is equal to one
for almost every point $x$ in $A$ while it is equal to zero for almost every point in the complement $X-A$
 (see \cite[Ch.~1.7, Cor.~3]{MR1158660}). Since there exists a subset $S\subset X$ of singular points such that $X-S$ 
 is locally bi-Lipschitz equivalent to $\R^n$ and $\mu_n(S)=0$ (see \cite{OS94}),
 the statement remains true for $\mu_n$-measurable sets $A\subset X$.

\section{Continuity and Lipschitz continuity}

\subsection{Measuring convex dense sets}
\label{sub:continuity}

\begin{lemma}\label{lem:full}
Let $C$ be a dense convex subset of $X$. If there exists a point $p\in X$ and a radius $r_0>0$,
such that $\mu_n(C\cap B_{r_0}(p))>0$, then $\mu_n(C\cap B_{\rho}(q))=\mu_n(B_{\rho}(q))$ holds for every 
$q\in X$ and every $\rho>0$.
\end{lemma}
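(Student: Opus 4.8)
The plan is to use the Lebesgue density theorem together with a doubling-type argument to propagate positive density through the space. First I would observe that since $C$ is convex and $X-S$ is locally bi-Lipschitz to $\R^n$ with $\mu_n(S)=0$, the set $C$ is $\mu_n$-measurable near regular points, so the density $\Theta_C(x)$ is defined and equals $1$ for $\mu_n$-almost every $x\in C$ and $0$ for $\mu_n$-almost every $x\in X-C$. The hypothesis $\mu_n(C\cap B_{r_0}(p))>0$ then guarantees the existence of at least one density point $x_0\in C$, i.e.\ a point with $\Theta_C(x_0)=1$.

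The heart of the argument is to show that the set $D:=\{x\in X : \Theta_C(x)=1\}$ is all of $X$, or at least dense with full local measure everywhere. For this I would exploit the midpoint estimate from Section \ref{sub:spaces}: if $x_2,x_3$ have midpoints $m_2,m_3$ of the segments from a fixed point to them, then $d(m_2,m_3)\geq \frac12 d(x_2,x_3)(1-At^2)$; equivalently, the midpoint map (or a suitable geodesic homothety toward a fixed center) is bi-Lipschitz onto its image with constant close to $\frac12$ on small balls. Since $C$ is convex, if $x_0\in C$ is a density point and $q\in C$, then the geodesic homothety $h_s$ pulling points toward $x_0$ by factor $s$ maps $B_\rho(q)\cap C$ into $C$; combining the lower bound on how much $h_s$ contracts distances with the Bishop volume comparison (which controls how much $\mu_n$ can concentrate), I would deduce that a definite fraction of $B_{s\rho}(h_s(q))$ lies in $C$, and then let $s\to 0$ to transfer the full-density property from $x_0$ to $q$. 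More carefully: for $q\in C$, the points $h_s(q)$ converge to $x_0$, and $h_s(B_\rho(q)\cap C)\subset B_{s\rho(1+o(1))}(h_s(q))\cap C$ contains an image of definite relative measure; reading the density of $C$ at $x_0$ along this shrinking sequence of balls forces $\Theta_{X-C}(q)$ to vanish, hence $\Theta_C(q)=1$. Thus every point of $C$ is a density point, and since $C$ is dense and $\mu_n$ is Radon, $\mu_n(C\cap B_\rho(q))=\mu_n(B_\rho(q))$ for all $q\in C$; approximating an arbitrary $q\in X$ by points of $C$ and using $\mu_n(B_\rho(q))=\lim_{q'\to q}\mu_n(B_\rho(q'))$ (inner/outer regularity of $\mu_n$) gives the conclusion for all $q\in X$.

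The main obstacle I anticipate is making the volume-transfer step quantitative in a way that survives the curvature bound, i.e.\ controlling both the distortion of the geodesic homothety $h_s$ (the factor $1-At^2$ is only an estimate, not an isometry) and the possible nonuniformity of $\mu_n$ under Bishop comparison, simultaneously as $s\to 0$. The saving grace is that as $s\to 0$ all relevant balls shrink to the single density point $x_0$, where $X$ looks Euclidean at first order, so the distortions are $1+o(1)$ and the Bishop ratio tends to $1$; the delicate bookkeeping is to ensure the errors are uniform over the ball $B_\rho(q)$ being transported. An alternative, possibly cleaner route would be to bypass homotheties and instead argue directly that $D$ is open (any density point has a neighborhood of density points, via the locally Euclidean structure on $X-S$ and Lebesgue density in $\R^n$) and closed in $X-S$, hence all of the connected, dense regular part, and then conclude by density of $C$ and regularity of $\mu_n$; I would try this topological version first and fall back on the quantitative homothety argument only if connectedness of the regular set or the boundary behaviour causes trouble.
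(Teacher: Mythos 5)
There is a genuine gap, and it sits in the two central steps. First, your density transfer runs in the wrong direction. Contracting $B_\rho(q)\cap C$ towards a density point $x_0\in C$ only produces sets of definite relative measure inside $C$ \emph{near $x_0$}, which is information you already have; it tells you nothing about $C$ near $q$. To force $\Theta_{X-C}(q)=0$ you would need the homothety $h_s$ to carry $X-C$ into $X-C$, but convexity of $C$ only controls geodesics between points \emph{of} $C$, so $h_s$ may well map points of $X-C$ into $C$ and the intended contradiction never materialises; running $h_s^{-1}$ instead costs a factor $s^{-n}$ that exactly cancels the gain. Second, even granting that every point of $C$ is a density point of $C$, the conclusion $\mu_n(C\cap B_\rho(q))=\mu_n(B_\rho(q))$ does not follow from that together with density of $C$ and regularity of $\mu_n$: the complement of a fat Cantor set in $[0,1]$ is open and dense, hence every one of its points has density $1$, yet it fails to have full measure. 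What is actually needed is a positive lower density of $C$ at \emph{every} point of the ball, including points of $X-C$; only then does the Lebesgue density theorem force the complement to be null. Your fallback route fares no better, since the set of density points of a measurable set is in general not open. A further (more easily repaired) issue is that you assume $C$ is measurable near regular points; measurability of dense convex sets is exactly the open question quoted in the introduction, and one must instead work with a Borel hull $\tilde C\supset C\cap B_\rho(q)$ of equal outer measure.

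The paper's proof runs the contraction the other way: given an arbitrary $x\in X$ and small $r>0$, it picks $y\in B_{r/2}(x)\cap C$ using density of $C$ and contracts the set $B_1(p)\cap C$ of known positive measure towards $y$ by a factor $\lambda\sim r$. Convexity keeps the image inside $C$, the comparison estimate of Section \ref{sub:spaces} makes the contraction $\lambda$-co-Lipschitz so the image has measure $\gtrsim r^n$, and the image lands in $B_r(x)$. This yields a positive lower density of $\tilde C$ at every point of $B_\rho(q)$, whence the Lebesgue density theorem gives $\mu_n(\tilde C)=\mu_n(B_\rho(q))$. Your homothety idea is salvageable, but only if you aim the contraction at an arbitrary target point of $X$ and feed it the ball where $C$ is already known to have positive measure, rather than aiming at a density point of $C$.
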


\begin{proof}
Without loss of generality we may assume that $r_0=1$. We first treat the case $\kappa\geq 0$.
Set $M:=\mu_n(B_1(p)\cap C)$. For some $x\in X$ and $r>0$ choose $y\in B_{\frac{r}{2}}(x)\cap C$. To find measure in $B_r(x)\cap C$ 
we will pull $B_1(p)\cap C$ towards $y$ by a co-Lipschitz map. Set $\lambda:=\frac{r}{2(r+d(p,x)+1)}$
and choose for every point $p'\in B_1(p)\cap C$ a shortest geodesic $[p'y]$ and define $y'$ as the unique point on 
$[p'y]$ with $d(y,y')=\lambda d(y,p')$. Then the assignment $p'\mapsto y'$ is $\lambda$-co-Lipschitz and since 
$d(y,y')=d(y,p')\lambda\leq (r+d(p,x)+1)\lambda<r/2$, it maps $B_1(p)$ into $B_r(x)$. Hence, for $r<1$ we have
 \begin{equation}\label{equ:coLip}
  \mu_n(B_r(x)\cap C)\geq \lambda^n M \geq \underbrace{\frac{M}{(2d(p,x)+4)^n}}_{=:M_x}r^n.
 \end{equation}
In particular, $\Theta_C(x)\geq M_x/\omega_n >0$. 

Now pick some $q\in X$ and $\rho>0$. By Borel regularity, there is a measurable subset $\tilde C\subset B_\rho(q)$
with $C\cap B_\rho(q)\subset \tilde C$ and $\mu_n(C\cap B_\rho(q))=\mu_n(\tilde C)$. From above we obtain
$\Theta_{\tilde C}(x)>0$ for all $x\in B_\rho(q)$. Thus the Lebesgue density theorem implies 
$\mu_n(\tilde C)=\mu_n(B_\rho(q))$. This finishes the proof for $\kappa\geq 0$. If $\kappa<0$ we have to replace
Equation \ref{equ:coLip} by 
$$
\mu_n(B_r(x)\cap C)\geq c^n M
$$
where $c=\frac{\sinh(-\kappa\lambda R)}{\sinh(-\kappa R)}$ (see \cite[p.~370]{MR1835418}) and $R$ is some fixed
 radius $\geq d(p,x)+1$.
Since $\lim_{r\to 0}\frac{c}{r}>0$ the argument goes through as before.
\end{proof}

\subsection{Continuity}
In this subsection $f:X\to \R$ will denote a nonconstant affine function on an Alexandrov space $X \in \Alex^n(\kappa)$
 that is measurable with respect to the $n$-dimensional Hausdorff measure of $X$. For a value $t\in \im(f)$ we define
 $F_t:=f^{-1}(t)$ to be the {\em fiber over $t$}.

\begin{remark}
 If $f:X\to \R$ is an affine function, then the inverse image of every convex set is convex.
\end{remark}

\begin{lemma}\label{lem:fibernull}
Every fiber $F_t$ is a $\mu_n$-null set.
\end{lemma}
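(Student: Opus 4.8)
The plan is to show that each fiber $F_t$ is contained in a convex set whose complement has positive measure, and then invoke Lemma~\ref{lem:full} in contrapositive form. Here is the strategy in detail.

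\textbf{Step 1: The fiber is nowhere dense, or at least its complement contains a dense convex set of positive measure.} Since $f$ is non-constant and affine, the sets $C^+ := \{x \in X : f(x) > t\}$ and $C^- := \{x \in X : f(x) < t\}$ are both convex (by the Remark preceding the lemma, applied to the convex half-lines in $\R$), and at least one of them is nonempty; in fact, since $f$ is non-constant and its image is an interval, for $t$ in the interior of $\im(f)$ both are nonempty, and for an endpoint value at least one is. I would first reduce to the case that both $C^+$ and $C^-$ are nonempty: if $t$ is an endpoint of $\im(f)$, then $F_t$ is contained in the boundary of the convex sublevel or superlevel set, and one can either handle this directly or perturb. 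Assuming both are nonempty, pick $x^+ \in C^+$ and $x^- \in C^-$; every geodesic from $x^+$ to $x^-$ meets $F_t$ in exactly one point by affineness, so $F_t$ separates $X$.

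\textbf{Step 2: Use affineness plus the midpoint estimate to show $F_t$ has density zero everywhere, hence measure zero.} The cleaner route avoids Lemma~\ref{lem:full} and argues directly. Fix $x \in F_t$ and a small radius $r$. Using the co-Lipschitz pushing construction from the proof of Lemma~\ref{lem:full}: push a fixed ball $B_1(p^+) \cap C^+$ (which, if it had positive measure, would contradict nothing yet) — instead, the key point is that $C^+$ and $C^-$ are disjoint convex sets partitioning $X \setminus F_t$. If $\mu_n(F_t) > 0$, then by the Lebesgue density theorem $F_t$ has density one at $\mu_n$-a.e.\ point of $F_t$, so at such a point $x$ we have $\mu_n(B_r(x) \cap C^\pm)/(\omega_n r^n) \to 0$. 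On the other hand, I claim the convex sets $C^+$ and $C^-$ each have \emph{positive upper density} at every point of $X$: given $x \in F_t$ and $y^+ \in C^+$, $y^- \in C^-$ nearby, the co-Lipschitz contraction toward $y^+$ sends a fixed positive-measure chunk of $C^+$ into $B_r(x) \cap C^+$, exactly as in \eqref{equ:coLip} — note $C^+$ is convex so the pushed points stay in $C^+$. This forces $\Theta_{C^+}(x) > 0$, contradicting density one of $F_t$ at $x$. Therefore $\mu_n(F_t) = 0$.

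\textbf{Step 3 (clean packaging).} Alternatively, and more in the spirit of the preceding lemma: apply Lemma~\ref{lem:full} to $C = C^+ \cup F_t$, which is convex (it is a sublevel-type set $\{f \geq t\}$) but need not be dense; instead apply it to the genuinely dense convex set obtained by noting that $\{f > t\}$ is dense in $\{f \geq t\}$ and taking the ambient argument on the convex body $\overline{\{f \geq t\}}$. I would choose whichever of these is cleanest; the density-comparison argument of Step~2 is self-contained and I would lead with it. \textbf{The main obstacle} I anticipate is the endpoint case (when $t$ is the minimum or maximum of $\im(f)$, so one of $C^+, C^-$ is empty): then $F_t$ is the zero-level set of the convex function $f - t \geq 0$, and one must argue that the zero set of a non-constant affine-hence-concave-and-convex function still has measure zero — here one uses that $\{f > t\}$ is nonempty (as $f$ is non-constant) and open-ish along quasigeodesics, together with convexity of $\{f > t\}$ being dense in $X$ when $t = \min \im f$ fails to be all of $X$; pushing $\{f > t\} \cap B_1(p)$ toward a point of $F_t$ again yields positive density of the complement of $F_t$ at every point of $F_t$, closing the argument.
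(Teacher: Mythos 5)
Your overall strategy---comparing the density of $F_t$ with that of the super-level set $C^+=\{f>t\}$ at a point where $F_t$ has density one---differs from the paper's. The paper instead contracts $F_t\cap B_R(q)$ radially toward a point $q$ with $f(q)=t'\neq t$; by affineness the image under the contraction of ratio $\tau$ lies in the fiber over the intermediate value determined by $\tau$, and by co-Lipschitzness it has positive measure. Letting $\tau$ range over $(0,1)$ produces uncountably many pairwise disjoint measurable sets of positive measure inside $B_R(q)$, contradicting $\mu_n(B_R(q))<\infty$.

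Your route has a genuine gap at its central step. In Step 2 you push ``a fixed positive-measure chunk of $C^+$'' toward a nearby point $y^+\in C^+$ and use convexity of $C^+$ to keep the image inside $C^+$; but nowhere do you establish that $C^+$ carries positive measure anywhere. A nonempty convex subset of $X$ can perfectly well be $\mu_n$-null (a point, a geodesic segment, any lower-dimensional convex piece), so the existence of such a chunk is not free---and Step 3 inherits the same problem, since Lemma \ref{lem:full} only upgrades a dense convex set that is \emph{already known} to have positive measure in some ball. The repair is to push the set you actually know to have positive measure, namely $F_t\cap B_1(p)$, toward a point $y^+\in C^+\cap B_{r/2}(x)$: the image is a positive-measure subset of $B_r(x)$ by the co-Lipschitz estimate \eqref{equ:coLip}, and it lies in $C^+$ not because $C^+$ is convex but because the value of $f$ at an image point is the strict convex combination $(1-\lambda)f(y^+)+\lambda t>t$. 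With that correction your density comparison closes, and it also disposes of the endpoint case, since only one of $C^{\pm}$ needs to be nonempty. Note, however, that this repaired step is exactly the paper's co-Lipschitz pushing of the fiber, after which the ``uncountably many disjoint positive-measure fibers'' count is the shorter way to conclude.
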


\begin{proof}
 Assume there is a radius $r>0$ and a point $p\in F_t$ such that $\mu_n(F_t\cap B_r(p))>0$. 
Choose a point $q$ with $f(q)=t'\neq t$. We may assume that $t<t'$.
 Set $R:=r+d(p,q)$. For every $\tau\in(0,1)$ we have a map $\varphi_\tau:B_R(q)\to B_{\tau R}(q)$ 
which sends a point $z\in B_R(q)$ to a point $\varphi_\tau(z)$
 on a shortest geodesic from $q$ to $z$ which lies at distance $\tau d(q,z)$ from $q$. Note that 
$\varphi_\tau$ is co-Lipschitz and maps $F_t\cap B_R(q)$ to 
 $F_{(1-\tau)t+\tau t'}\cap B_{\tau R}(q)$. Hence $\mu_n(F_{\theta}\cap B_{R}(q))>0$ for
every $\theta\in(t,t')$. Since the sets $F_{\theta}\cap B_{R}(q)$ are all measurable and disjoint, 
this contradicts $\mu_n(B_R(q))<\infty$.
\end{proof}

\begin{lemma}\label{lem:nodense}
 No fiber of $f$ is dense in $X$.
\end{lemma}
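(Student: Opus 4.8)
The plan is to argue by contradiction. Assume that $F_t$ is dense in $X$. By the Remark above, $F_t=f^{-1}(\{t\})$ is convex, and by Lemma~\ref{lem:fibernull} it is a $\mu_n$-null set. Since the restriction of $f$ to any geodesic is affine, hence continuous, $f$ has the intermediate value property on the connected space $X$, so $\im(f)$ is an interval; write $I$ for its interior, a nonempty open interval.

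The crucial step is to upgrade the density of $F_t$ to the density of $F_s$ for every $s\in I$. Fix $q\in X$ with $f(q)=t''$ for some $t''\in\im(f)$ and, as in the proof of Lemma~\ref{lem:fibernull}, consider the radial contraction $\psi_\lambda\colon B_R(q)\to B_{\lambda R}(q)$ sending $z$ to the point of a shortest geodesic $[qz]$ at distance $\lambda\,d(q,z)$ from $q$; it satisfies $f(\psi_\lambda(z))=(1-\lambda)t''+\lambda f(z)$, so it maps $F_t\cap B_R(q)$ into the fiber over $(1-\lambda)t''+\lambda t$. Since $t\in\im(f)$ and $\im(f)$ is an interval, as $t''$ ranges over $\im(f)$ and $\lambda$ over $(0,1)$ this target value, together with $t$ itself, covers all of $I$. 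Now $F_t\cap B_R(q)$ is dense in $B_R(q)$, and $\psi_\lambda$ is continuous with image dense in $B_{\lambda R}(q)$ once one restricts to points $z$ off the cut locus of $q$, where geodesics from $q$ are unique and can be extended by the factor $1/\lambda$; hence $\psi_\lambda(F_t\cap B_R(q))$ is dense in a ball around $q$, and letting $q$ and $R$ vary shows that each fiber $F_s$, $s\in I$, is dense in $X$.

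Given this, fix any $s\in I$. The sets $A_-^s:=f^{-1}((-\infty,s])$ and $A_+^s:=f^{-1}([s,\infty))$ are convex by the Remark and both dense, since each contains the dense fiber $F_s$. As $A_-^s\cup A_+^s=X$ has positive $\mu_n$-measure, Lemma~\ref{lem:full} forces one of them to be co-null, hence one of the open tails $f^{-1}((s,\infty))$, $f^{-1}((-\infty,s))$ to be $\mu_n$-null. Therefore $I=I_-\cup I_+$ with $I_-:=\{s\in I:\mu_n(f^{-1}((s,\infty)))=0\}$ an up-set and $I_+:=\{s\in I:\mu_n(f^{-1}((-\infty,s)))=0\}$ a down-set in $I$. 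An interval covered by an up-set and a down-set either has the two overlap at some $s_0$, and then $X=f^{-1}((-\infty,s_0))\cup F_{s_0}\cup f^{-1}((s_0,\infty))$ is a union of three $\mu_n$-null sets, using Lemma~\ref{lem:fibernull} for $F_{s_0}$; or the two abut at a value $\gamma$, and then a one-sided limit shows $\{f<\gamma\}$, $F_\gamma$ and $\{f>\gamma\}$ are all $\mu_n$-null. In either case $\mu_n(X)=0$, which is absurd. (The case where $I_-$ or $I_+$ is empty is similar and even simpler, forcing $f$ to be constant off a null set.)

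The main obstacle is the crucial step, i.e.\ the propagation of density through the contractions $\psi_\lambda$. One must ensure that $\psi_\lambda$ is continuous --- it is, away from the branch set of geodesics issuing from $q$, which is meager and $\mu_n$-null and can be circumvented --- and that the geodesic from $q$ to a typical point can be prolonged by the required factor, so that $\psi_\lambda$ indeed has dense image in a ball about $q$. Both are handled by always choosing the relevant base points in appropriate dense subsets.
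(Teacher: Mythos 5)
Your measure-theoretic endgame (the up-set/down-set decomposition of $I$ and the conclusion that $X$ would be a union of $\mu_n$-null sets) is sound and is close in spirit to the paper's own argument. The gap is in your ``crucial step'': the claim that density of the single fiber $F_t$ propagates to density of $F_s$ for \emph{every} $s$ in the interior of $\im(f)$. Your mechanism for this is that $\psi_\lambda(F_t\cap B_R(q))$ is dense in $B_{\lambda R}(q)$, and for that you need, given a target point $w$ near $q$, to prolong the geodesic $[qw]$ by the factor $1/\lambda$ so as to hit (a neighbourhood of) a point of $F_t$. But geodesics in an Alexandrov space with curvature bounded below need not be extendable --- this is exactly the boundary phenomenon (e.g.\ a Euclidean ball) that the whole paper is designed to handle. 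Consequently $\psi_\lambda(B_R(q))$ can omit an open subset of $B_{\lambda R}(q)$ (take $q$ near the boundary of a disk), so the image of a dense set under $\psi_\lambda$ is only dense in $\psi_\lambda(B_R(q))$, not in $B_{\lambda R}(q)$. Your proposed remedy --- choosing base points in suitable dense subsets --- only addresses non-uniqueness/continuity of $\psi_\lambda$, not this extension obstruction, and I do not see how to repair the step without essentially assuming what you want to prove (you cannot yet use continuity of $f$ near an arbitrary point, since that is only established later, in Corollary \ref{cor:affine_continuity}). Without density of $F_s$, the sets $A_\pm^s$ in your second step are not known to be dense, and Lemma \ref{lem:full} no longer applies to them.

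The fix is to avoid the propagation step entirely, which is what the paper does: run the zero-or-full-measure dichotomy of Lemma \ref{lem:full} only on sets of the form $f^{-1}(J)$ for closed intervals $J$ \emph{containing} $t_0$. These automatically contain the dense fiber $F_{t_0}$, hence are dense and convex, so each has either zero or full measure. Taking the minimal closed interval $[a,b]\ni t_0$ whose preimage has full measure, minimality forces $f^{-1}([a,c])$ to be null for every $c\in[t_0,b)$ (and symmetrically on the other side), whence $f^{-1}([a,b))$ is null and $F_b=f^{-1}([a,b])\setminus f^{-1}([a,b))$ has full measure --- contradicting Lemma \ref{lem:fibernull}. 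Note how full measure of the boundary fiber is obtained by complementation inside a full-measure set, never by applying Lemma \ref{lem:full} to a set whose density is unknown; this is precisely the manoeuvre your argument is missing.
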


\begin{proof}
  Assume that there is a fiber $F_{t_0}$ which is dense in $X$. Then, by Lemma \ref{lem:full} and Lemma \ref{lem:fibernull} there is 
  a unique minimal closed interval $[a,b]$ containing $t_0$ such that $f^{-1}([a,b])$ has full measure in $X$. By minimality and Lemma \ref{lem:full}
  we conclude that the inverse image of any closed subinterval of $[a,b]$ which contains one of the endpoints but not the value $t_0$ has full measure in 
  $X$. This contradicts Lemma \ref{lem:fibernull} because it implies that one of the fibers $F_a$ or $F_b$ has full measure in $X$.
\end{proof}

\begin{corollary}\label{cor:clofib}
Every fiber $F_t$ is closed.
\end{corollary}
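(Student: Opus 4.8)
The plan is to pass to the closure of a hypothetical non-closed fiber and apply Lemma \ref{lem:nodense} there. Suppose $F_{t_0}$ is not closed and fix $p\in\overline{F_{t_0}}\setminus F_{t_0}$, so that $f(p)=s_0$ for some $s_0\neq t_0$. By the Remark above, $F_{t_0}=f^{-1}(t_0)$ is totally convex, and hence so is $C:=\overline{F_{t_0}}$: in the proper geodesic space $X$ the closure of a totally convex set is totally convex, since a uniform limit (extracted via Arzel\`a--Ascoli) of geodesics joining approximating pairs of points is again a geodesic. Thus $C$ is a closed totally convex subset of $X\in\Alex^n(\kappa)$, so $C\in\Alex^k(\kappa)$ with $k=\dim C$, its intrinsic metric coinciding with the restricted one.

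Now $f$ restricts to an affine function $h:=f|_C$ on $C$ — geodesics of $C$ are geodesics of $X$ — and $h$ is \emph{nonconstant}, since it equals $t_0$ on $F_{t_0}\subseteq C$ and equals $s_0\neq t_0$ at $p$. Moreover $h^{-1}(t_0)=F_{t_0}$, which is \emph{dense} in $C$ by construction. So $h$ is a nonconstant affine function on an Alexandrov space possessing a dense fiber, which contradicts Lemma \ref{lem:nodense} applied to $(C,h)$. Hence every $F_t$ is closed.

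The one point deserving care is the last application of Lemma \ref{lem:nodense}: it was stated for the fixed space $X$ with a $\mu_n$-measurable $f$, while $h=f|_C$ carries no measurability information when $k<n$ (then $\mu_n(C)=0$). However, the proof of Lemma \ref{lem:nodense} uses its measurability hypothesis only through two facts, both valid for an arbitrary nonconstant affine function on any $\Alex^k(\kappa)$ space: first, the dichotomy of Lemma \ref{lem:full} for dense convex subsets, whose proof is purely metric and already avoids measurability by passing to a Borel hull; and second, the statement that no fiber of a nonconstant affine function has full $\mu_k$-measure. The latter follows from the co-Lipschitz contraction argument of Lemma \ref{lem:fibernull}: if $h^{-1}(a)$ had full measure, then contracting it towards a point $q'$ with $h(q')\neq a$ sends it, for each factor $\tau\in[1/2,1)$, into the fiber over $(1-\tau)h(q')+\tau a$; since the contraction is $c(\tau)$-co-Lipschitz with $c(\tau)\geq c(1/2)>0$ and (choosing geodesics measurably) maps the full-measure set to an analytic, hence $\mu_k$-measurable, set of measure $\geq c(1/2)^k\,\mu_k(B_R(q'))$, one obtains uncountably many pairwise disjoint measurable sets of uniformly positive measure inside the finite-measure ball $B_R(q')$, which is impossible. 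With this, the proof of Lemma \ref{lem:nodense} applies to $(C,h)$ word for word.

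Accordingly, I expect the main obstacle to lie in this bookkeeping — verifying that the auxiliary results of this section persist once the global measurability assumption is dropped and one restricts attention to the convex subset $C$, the only genuine input being that a fiber of a nonconstant affine function is never of full $\mu_k$-measure. (In the case $k=n$ one could instead argue that $C$ has nonempty interior in $X$, by invariance of domain, cf. \cite{BIP10}, together with density of the regular set, so that $f$ is already $\mu_n$-measurable on a convex ball inside $C$ and Lemma \ref{lem:nodense} applies directly; but the case $k<n$ still seems to require the measurability-free observation.)
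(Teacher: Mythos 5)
Your argument is the paper's argument: pass to $C=\overline{F_{t_0}}$, which as a closed convex subset of $X$ is itself an Alexandrov space with the same lower curvature bound, observe that $f|_C$ is a nonconstant affine function on $C$ with a dense fiber, and contradict Lemma \ref{lem:nodense}. The long second half of your write-up addresses a point the paper passes over in silence: Lemma \ref{lem:nodense} was proved under the standing assumption that the affine function is $\mu_n$-measurable on the ambient space, and $f|_C$ is not known to be $\mu_k$-measurable when $k=\dim C<n$. Your diagnosis is correct --- the only genuinely measure-theoretic input is that no fiber of a nonconstant affine function on an $\Alex^k(\kappa)$ space can have full measure --- but your proof of that input via measurable selections and analytic images is heavier than necessary: if $h^{-1}(a)$ had $\mu_k$-null complement in $C$, then for $\tau\in(0,1)$ the $c(\tau)$-co-Lipschitz contraction toward a point $q'$ with $h(q')\neq a$ would carry a set of positive outer measure into the fiber over $(1-\tau)h(q')+\tau a$, i.e.\ into that null complement; since a $c$-co-Lipschitz map decreases outer Hausdorff measure by at most the factor $c^k$, this is already absurd with no measurability of images or fibers. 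Two cautions on the claim that Lemma \ref{lem:nodense} then applies ``word for word'': first, what that proof actually needs is ``null complement'' rather than ``full outer measure'' (these differ for non-measurable sets, and only the former survives the countable intersections and complementations taken there), whereas the dichotomy of Lemma \ref{lem:full} a priori delivers only full outer measure --- so a little extra rewording is required; second, the statement you should extract is precisely ``no fiber has null complement,'' which is what the outer-measure argument above provides. These are repairable bookkeeping issues, not gaps in the idea, and the same issues are latent in the paper's own one-line proof.
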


\begin{proof}
If $F_{t_0}$ is not closed for some $t_0\in\im f$, then the restriction $f|_{\bar F_{t_0}}$ is a nonconstant 
affine function on the closure $\bar F_{t_0}$, which is itself an Alexandrov space of dimension $k\leq n$. 
This contradicts Lemma \ref{lem:nodense} because $f|_{\bar F_{t_0}}$ has a dense fiber.
\end{proof} 

\begin{corollary}\label{cor:affine_continuity}
The map $f$ is continuous.
\end{corollary}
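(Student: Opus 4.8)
The plan is to deduce continuity of $f$ directly from Corollary \ref{cor:clofib}, using that $f$ is affine along geodesics together with the intermediate value property this forces. Fix $x_0\in X$ and set $c:=f(x_0)$; the goal is continuity of $f$ at $x_0$. Let $\epsilon>0$ be arbitrary. The sets $f^{-1}(c-\epsilon)$ and $f^{-1}(c+\epsilon)$ are closed — by Corollary \ref{cor:clofib} if $c\pm\epsilon\in\im f$, and trivially (being empty) otherwise — and neither contains $x_0$. Hence their union is a closed set missing $x_0$, and there is $r>0$ with
\[
 B_r(x_0)\cap\bigl(f^{-1}(c-\epsilon)\cup f^{-1}(c+\epsilon)\bigr)=\emptyset .
\]

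Now fix any $y\in B_r(x_0)$ and choose a geodesic $\gamma:[0,L]\To X$ from $x_0$ to $y$ parametrized by arclength, with $L=d(x_0,y)<r$; such a geodesic exists because $X$ is a geodesic space. Every point of $\gamma$ satisfies $d(x_0,\gamma(t))=t\le L<r$, so the whole geodesic lies in $B_r(x_0)$. By the definition of an affine function, $(f\circ\gamma)''=0$, so $f\circ\gamma$ is an affine function of $t$ with $f(\gamma(0))=c$ and $f(\gamma(L))=f(y)$; in particular its image is the closed interval between $c$ and $f(y)$. If $f(y)\ge c+\epsilon$, then $f\circ\gamma$ would attain the value $c+\epsilon$ at some $t^{*}\in[0,L]$, so $\gamma(t^{*})\in f^{-1}(c+\epsilon)\cap B_r(x_0)$, contradicting the choice of $r$; symmetrically $f(y)\le c-\epsilon$ is impossible. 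Thus $f(y)\in(c-\epsilon,c+\epsilon)$ for all $y\in B_r(x_0)$, which is exactly continuity of $f$ at $x_0$. Since $x_0$ was arbitrary, $f$ is continuous.

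Note that this argument uses nothing beyond Corollary \ref{cor:clofib}, the affineness of $f$ along geodesics, and the fact that $X$ is geodesic; the measurability and non-constancy hypotheses have already been consumed in the chain of lemmas producing Corollary \ref{cor:clofib}. The only point requiring a moment's care — and the nearest thing to an obstacle — is the observation that a geodesic issued from $x_0$ remains inside $B_r(x_0)$, so that an intermediate value $c\pm\epsilon$, if attained along $[x_0y]$, is attained at a point of the neighborhood we arranged to be free of those fibers; the rest is immediate.
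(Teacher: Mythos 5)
Your argument is correct and rests on exactly the same two ingredients as the paper's proof: the closedness of fibers from Corollary \ref{cor:clofib} and the intermediate value property of $f$ along geodesics. The paper packages this as ``preimages of compact intervals are closed'' via a sequential argument, while you give a direct $\epsilon$--$r$ formulation, but the underlying mechanism is identical.
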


\begin{proof}
We will show that $f^{-1}([a,b])$ is closed for every compact interval $[a,b]$ in $\R$. Assume that the closure 
of $f^{-1}([a,b])$ contains a point $x$ with $f(x)\notin [a,b]$. We may assume $f(x)>b$. Choose a sequence
$(x_i)$ in $f^{-1}([a,b])$ with $x_i\to x$. Then the geodesic segments connecting $x_i$ to $x$ intersect the fiber
$F_b$ in points $y_i$. But then $y_i\to x$ and therefore $x\in F_b$ by Corollary \ref{cor:clofib}.
\end{proof}

\begin{corollary}[Local Lipschitz regularity in the interior]\label{lem:loLip}
The function $f$ is locally Lipschitz continuous in the interior of $X$.
\end{corollary}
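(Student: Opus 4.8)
The plan is to bound, uniformly in a neighbourhood of a given interior point, the slope of $f$ along geodesics, by prolonging them to quasigeodesics along which $f$ is still affine and stays bounded. Fix a point $p$ in the interior of $X$. Since the interior is open, I would choose $R>0$ with $\bar B_{5R}(p)$ contained in the interior of $X$; this closed ball is compact (a complete, locally compact length space is proper), and since $f$ is continuous by Corollary~\ref{cor:affine_continuity} the number $M:=\sup_{\bar B_{5R}(p)}|f|$ is finite. The claim will be that $f$ is $(M/R)$-Lipschitz on $B_R(p)$; this suffices, as $p$ was an arbitrary interior point.

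So let $x,y\in B_R(p)$ with $x\neq y$, and put $\ell:=d(x,y)<2R$. Choose a geodesic $[xy]$; since $x$ lies in the interior, $[xy]$ stays in the interior \cite[Thm.~1.1 A]{Pe98}. Now I would prolong $[xy]$ past $y$: by Petrunin's theory of quasigeodesics \cite{QG95, Pe07} every geodesic extends to a quasigeodesic, so there is a unit-speed quasigeodesic $\gamma\colon[0,\ell+2R]\to X$ with $\gamma|_{[0,\ell]}=[xy]$. (Concretely, one may take $\gamma$ to be the concatenation of $[xy]$ with a quasigeodesic issuing from $y$ in the forward direction of $[xy]$, which exists by \cite[Thm.~A.0.1]{Pe07}; at the junction $y$ the concatenation is again a quasigeodesic because the incoming one-sided derivative of any $\lambda$-concave function dominates the outgoing one — the two directions at $y$ being antipodal, this is the concavity of the differential of a $\lambda$-concave function on the tangent cone.) Being unit-speed of length $\ell+2R<4R$ and issuing from a point of $B_R(p)$, the curve $\gamma$ stays in $\bar B_{5R}(p)$, hence in the interior of $X$.

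Finally, by the observation at the end of Section~\ref{sub:qg}, $f\circ\gamma$ is affine; let $s$ denote its constant slope. Comparing the values of $f\circ\gamma$ at the endpoints $0$ and $\ell+2R$ gives $|s|=|f(\gamma(\ell+2R))-f(x)|/(\ell+2R)\leq 2M/(\ell+2R)\leq M/R$, and since $\gamma(\ell)=y$, affinity yields $|f(y)-f(x)|=|s|\,\ell\leq (M/R)\,d(x,y)$, proving the claim.

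I expect the prolongation step to be the only substantive point. Affinity alone gives no a priori bound on the slope of $f$; the estimate works precisely because a geodesic issuing from an interior point can be extended, as a quasigeodesic, by a fixed length $2R$ without leaving the compact ball $\bar B_{5R}(p)$ on which $f$ is bounded. That extendability, together with the unit-speed property of quasigeodesics and the fact that an affine function continuous in the interior restricts to an affine function along interior quasigeodesics, is exactly the input from \cite{QG95, Pe07}; verifying that the concatenation at $y$ yields a genuine quasigeodesic — via the differential of a $\lambda$-concave function at the junction point — is the technical heart of the argument.
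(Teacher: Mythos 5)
Your argument is correct, but it takes a genuinely different route from the paper's, whose entire proof is a citation: since $f$ is continuous by Corollary~\ref{cor:affine_continuity} and affine along geodesics, its restriction to the interior is $0$-concave in the sense of Section~\ref{sub:qg}, and \cite[Cor.~3.3.2]{Pe07} asserts that such semiconcave functions are locally Lipschitz. You instead reprove this fact for affine $f$ by hand: you prolong each geodesic $[xy]$ beyond $y$ by a quasigeodesic of definite length $2R$ that stays inside a compact ball on which $f$ is bounded by $M$, and read off the explicit local Lipschitz constant $M/R$ from the constant slope of $f$ along the prolongation; the chain $|s|\leq 2M/(\ell+2R)\leq M/R$ and the containment of the prolongation in $\bar B_{5R}(p)$ both check out. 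The one step you should cite rather than sketch is the prolongation itself: a direction at $y$ genuinely antipodal (at angle $\pi$) to the incoming direction of $[xy]$ need not exist, and the correct notion is a \emph{polar} direction, for which the sum of the directional derivatives of any $\lambda$-concave test function is nonpositive --- exactly the inequality that makes the one-sided derivatives match up at the junction so that the concatenation is again a quasigeodesic. This prolongation property is part of the Perelman--Petrunin theory \cite{QG95, Pe07} and is in fact used without comment elsewhere in the paper (in the proof of the gradient-constancy lemma and of Corollary~\ref{cor:abs_grad}), so invoking it here is legitimate. What your route buys is a self-contained, quantitative argument with an explicit constant that makes transparent why interiority is needed (prolongations of fixed length exist inside the interior, where $f$ restricted to quasigeodesics is affine); what it costs is reliance on the prolongation machinery, which is at least as heavy as the Lipschitz property of semiconcave functions that the paper simply quotes.
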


\begin{proof}
This follows from Corollary 3.3.2 in \cite{Pe07}.
\end{proof}

\subsection{Lipschitz continuity} \label{sub:lipschitz_continuity}

Let $X$ be an $\Alex^n(\kappa)$ space and let $f:X \To \R$ be a non-constant locally Lipschitz continuous affine function.
 Recall that the tangent cone $T_x$ at a point $x \in X$ is an $\Alex^n(0)$ space. The function $f$ has a well-defined 
directional derivative $d_xf: T_x \To \R$ that is itself
 a Lipschitz continuous affine function and obeys the same Lipschitz constant as $f$, that is 
$||d_x f||\leq||f||$. If $v \in \Sigma_x$ is represented by a curve $\gamma:[0,\epsilon)\to X$ 
with $\gamma(0)=x$ (cf. \cite[2.1]{Pe07}), then $d_x f(v)$ is given by $(f\circ \gamma)'(0)$. If $d_x f$ 
attains positive values, then it attains its maximum on $\Sigma_x$ at a unique unit-vector $\xi_x$ \cite{Pe07}. 
In this case the vector $d_x f(\xi_x) \xi_x$ is called the \emph{gradient of $f$ at $x$} and is denoted by $\nabla_x f$. 
Otherwise, one sets $\nabla_x f=o_x \in T_x$. The length $|\nabla_x f|$ is called the 
\emph{absolute gradient} of $f$ at $x$. We have $|\nabla_x f|= \max \{0, \sup_{x\neq z} \left(f(z)-f(x)\right)/d(x,z) \}$. 
This implies that $|\nabla_x f|$ is lower semi-continuous, i.e. $\lim \inf  |\nabla_{x_i} f| \geq |\nabla_{x} f|$ for 
$x_i \To x$  \cite[Lem. 3.2.1]{QG95} (cf. \cite{Plaut}).
By Corollary \ref{lem:loLip} $|\nabla_x f|$ is finite for points $x$ in the 
interior of $X$.

In the following we frequently use that the restriction of $f$ 
to quasigeodesics in the interior is affine (cf. Section \ref{sub:qg}).

\begin{lemma} Let $\gamma: [a,b) \To X$ be a quasigeodesic such that $\gamma((a,b))$ is contained in the interior
of $X$. Then $p(t)=|\nabla_{\gamma(t)} f|$ is constant $p_0$ on $(a,b)$ and we have $p(a)\leq p_0$.
\end{lemma}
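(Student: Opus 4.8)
The plan is to exploit the affine behaviour of $f$ along the quasigeodesic $\gamma$ together with the lower semicontinuity of the absolute gradient. First I would record that since $\gamma((a,b))$ lies in the interior of $X$, the composition $f\circ\gamma$ is affine on $(a,b)$ by the observation at the end of Section \ref{sub:qg}; write $f\circ\gamma(t)=\alpha t+\beta$ for constants $\alpha,\beta$ on this interval. If $\alpha=0$ then $f$ is constant along $\gamma$ on $(a,b)$, and I claim $p(t)=0$ there: indeed for any fixed $t_0\in(a,b)$ the directions $\gamma^+(t_0)$ and $\gamma^-(t_0)$ are opposite unit vectors in $\Sigma_{\gamma(t_0)}$ along which $d_{\gamma(t_0)}f$ vanishes, so by affinity of $d_{\gamma(t_0)}f$ on geodesics of $\Sigma_{\gamma(t_0)}$ and the formula $|\nabla_x f|=\max\{0,\sup_{z\neq x}(f(z)-f(x))/d(x,z)\}$, one deduces the directional derivative cannot be everywhere positive — more directly, if $|\nabla_{\gamma(t_0)}f|>0$ there is a direction $\xi$ with $d_{\gamma(t_0)}f(\xi)>0$, hence also $d_{\gamma(t_0)}f(-\xi)<0$ by affinity, but then moving $\gamma$ slightly and using that $f\circ\gamma$ is affine with slope $0$ forces a contradiction via the co-Lipschitz/comparison estimates. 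So assume $\alpha>0$ (replacing $f$ by $-f$ if necessary, noting $|\nabla_x(-f)|=|\nabla_x f|$ would need care — instead just handle $\alpha<0$ symmetrically using $\gamma$ run backwards).

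Next, the core point: for each $t\in(a,b)$ I would show $|\nabla_{\gamma(t)}f|=\alpha$, independent of $t$. The inequality $|\nabla_{\gamma(t)}f|\geq\alpha$ is immediate because $\gamma^+(t)$ is a unit direction with $d_{\gamma(t)}f(\gamma^+(t))=(f\circ\gamma)'(t)=\alpha>0$, and $|\nabla_x f|=d_x f(\xi_x)\geq d_xf(v)$ for every unit $v$. For the reverse inequality I would use that $f$ restricted to \emph{every} quasigeodesic through $\gamma(t)$ is affine: given the maximizing direction $\xi_{\gamma(t)}\in\Sigma_{\gamma(t)}$, pick a quasigeodesic $\sigma$ with $\sigma(0)=\gamma(t)$, $\sigma^+(0)=\xi_{\gamma(t)}$ (such exists by \cite[Thm.~A.0.1]{Pe07}), which stays in the interior for small time since the interior is convex and open; then $f\circ\sigma$ is affine with slope $|\nabla_{\gamma(t)}f|$ at $0$, hence globally of slope $|\nabla_{\gamma(t)}f|$ on its interior domain. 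This says $f$ increases along $\sigma$ at rate $|\nabla_{\gamma(t)}f|$; but the absolute gradient at the \emph{other} endpoint $\sigma(s)$ is at least $|\nabla_{\gamma(t)}f|$ as well (same argument with $\sigma^+(s)$), and lower semicontinuity of $|\nabla f|$ plus the fact that $\xi$ maximizes means the absolute gradient can only jump down in the limit — so $s\mapsto|\nabla_{\sigma(s)}f|$ is nondecreasing along any quasigeodesic in the interior. Applying this monotonicity to $\gamma$ itself and to $\gamma$ traversed backwards (which is again a quasigeodesic on any subinterval of $(a,b)$, since quasigeodesics are reversible) forces $p$ to be simultaneously nondecreasing and nonincreasing on $(a,b)$, hence constant, say $p_0$.

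Finally, for the endpoint estimate $p(a)\leq p_0$: I would invoke lower semicontinuity directly. Taking $t_i\searrow a$ with $\gamma(t_i)\to\gamma(a)$, lower semicontinuity gives $\liminf_i|\nabla_{\gamma(t_i)}f|\geq|\nabla_{\gamma(a)}f|$, i.e. $p_0\geq p(a)$, which is exactly the claim. (Note the inequality can be strict, e.g. when $\gamma(a)$ is a boundary point where $d_{\gamma(a)}f$ has smaller maximum.)

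\textbf{Main obstacle.} The delicate step is establishing the monotonicity $s\mapsto|\nabla_{\sigma(s)}f|$ nondecreasing along a quasigeodesic $\sigma$ in the interior. Lower semicontinuity alone only controls limits from the moving side; the real input is that $f\circ\sigma$ is affine with a definite slope equal to the absolute gradient at the starting point, so that slope is a uniform \emph{lower} bound for $|\nabla_{\sigma(s)}f|$ at every later point, while the maximality of the gradient direction together with semicontinuity prevents it from increasing without the slope of $f\circ\sigma$ increasing too — reconciling these requires carefully choosing the quasigeodesic emanating in the gradient direction at each point and arguing that concatenations behave well, which is where the affineness-along-quasigeodesics machinery does the work. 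I would also need to be slightly careful that the quasigeodesic $\sigma$ in the gradient direction does indeed remain in the interior long enough, using that a geodesic — and by the same token a quasigeodesic, via \cite[Thm.~1.1 A]{Pe98} — starting in the interior stays there.
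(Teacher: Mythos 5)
There is a genuine gap, and in fact the central intermediate claim of your argument is false. You assert that $|\nabla_{\gamma(t)}f|$ equals the slope $\alpha=(f\circ\gamma)'$ of $f$ along $\gamma$ (and, in the degenerate case, that $\alpha=0$ forces $p\equiv 0$). This fails already in $X=\R^2$ with $f(x,y)=y$ and $\gamma(t)=(t\cos\theta,t\sin\theta)$: here $\alpha=\sin\theta$ while $|\nabla_{\gamma(t)}f|=1$ for every $\theta$, including $\theta=0$. Only the inequality $|\nabla_{\gamma(t)}f|\geq\alpha$ is valid, which is the half of your argument that works. The same confusion undermines the monotonicity step: what your reasoning actually shows is that along a quasigeodesic $\sigma$ issuing in the gradient direction $\xi_{\sigma(0)}$ one has $|\nabla_{\sigma(s)}f|\geq(f\circ\sigma)'=|\nabla_{\sigma(0)}f|$, because the slope of $f$ along $\sigma$ is a lower bound for the absolute gradient at every point of $\sigma$. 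For the given quasigeodesic $\gamma$, however, the slope of $f\circ\gamma$ is in general strictly smaller than $p(t)$, so the same reasoning only returns $p(s)\geq\alpha$, not $p(s)\geq p(t)$; the step ``applying this monotonicity to $\gamma$ itself and to $\gamma$ reversed'' is therefore unjustified. Lower semicontinuity supplies inequalities pointing the wrong way to close this gap.

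What is actually needed --- and what the paper does --- is a comparison of the full supremum over all directions at $\gamma(s)$ with the corresponding supremum at $\gamma(s\pm t)$. Concretely: for an arbitrary point $m$ near $\gamma(s)$, one extends the geodesic from $\gamma(s-t)$ through $m$ to a quasigeodesic of twice the length with endpoint $z$; affineness of $f$ along quasigeodesics gives $f(m)-f(\gamma(s))=\frac{1}{2}\left(f(z)-f(\gamma(s+t))\right)$, while the midpoint comparison estimate of Section \ref{sub:spaces} gives $d(m,\gamma(s))\geq\frac{1}{2}d(z,\gamma(s+t))(1-At^2)$. Dividing and taking the supremum over $m$ yields $p(s)(1-At^2)\leq p(s+t)$ for small $t$ of either sign, which makes $p$ locally Lipschitz with vanishing derivative, hence constant. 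The essential point you are missing is that the test point $m$ must range over \emph{all} directions at $\gamma(s)$, not only the direction of $\gamma$ or the gradient direction. Your treatment of the endpoint inequality $p(a)\leq p_0$ via lower semicontinuity of the absolute gradient is correct and agrees with the paper.
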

\begin{proof} Since the restriction of $f$ to $\gamma$ is continuous, we have $\mathrm{lim\ inf}_{t\To a} p(t) \geq p(a)$ (cf. Section \ref{sub:functions}). 
It remains to prove that $p$ is constant on $(a,b)$. The statement is local, therefore we may assume that $\gamma$ is
 parameterized by the arclength and has length smaller than $r/2$. Here and below $r$ and $A$ are chosen as in Section \ref{sub:spaces}.

We claim that for each $s \in (a,b)$ and sufficiently small $t$ with $|t|\leq \min \{s-a,b-s\}$ one has
 $p(s)(1-At^2)\leq p(s+t)$. This claim implies that $p$ is locally Lipschitz and that the differential of $p$ vanishes at each point in $(a,b)$.
Hence $p$ has to be constant on $(a,b)$.

In order to prove the claim, choose $t>0$ such that $B_{5t}(\gamma(s))$ is contained in the interior of $X$ 
and pick a point $m \in B_{t}(\gamma(s)) \backslash\{\gamma(s)\}$. We can extend a minimizing geodesic between $\gamma(s-t)$ and $m$ through $m$ to a quasigeodesic
of twice the length which lies completely in the interior of $X$. Let $z \neq \gamma(s-t)$ be its endpoint. 
Again using the fact that restrictions of $f$ to quasigeodesics are affine, we obtain 
$f(m)-f(\gamma(s))=\frac{1}{2}(f(z)-f(\gamma(s+t))$. On the other hand, we have $d(m,\gamma(s)) \geq\frac{1}{2}d(z,\gamma(s+t))(1-At^2)$ 
(see Section \ref{sub:spaces}). Hence,
\[
		\max \left\{0,\frac{f(m)-f(\gamma(s))}{d(m,\gamma(s))} \right\} \leq \max \left\{0,\frac{f(z)-f(\gamma(s+t))}{d(z,\gamma(s+t))(1-At^2)}\right\}.
\]
We deduce that $p(s)(1-At^2)\leq p(s+t)$ and the claim follows.
\end{proof}

As a corollary we obtain Theorem \ref{thm:Lip_cont}.

\begin{corollary}\label{cor:abs_grad} Let $x \in X$ be a point in the interior of $X$ and let $z \in X$ be any point.
 Then $|\nabla_z f|\leq |\nabla_x f|$. In particular, $|\nabla_x f|$ is constant $L_0< \infty$ in the interior of $X$ and $f$ is Lipschitz continuous with optimal Lipschitz constant $L_0$.
\end{corollary}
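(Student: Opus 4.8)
The plan is to combine the lemma just proved with the lower semicontinuity of the absolute gradient. First I would take an arbitrary point $z \in X$ and an interior point $x$, and connect them appropriately. The natural idea is to pick a direction $\xi \in \Sigma_z$ and a quasigeodesic $\gamma:[0,b)\To X$ with $\gamma(0)=z$, $\gamma^+(0)=\xi$; since the interior of $X$ is convex and open, and $x$ lies in the interior, one can arrange (possibly after reparametrising and shortening) that $\gamma((0,b))$ meets the interior. Actually the cleaner route: join $z$ to $x$ by a geodesic; a geodesic that starts anywhere and enters the interior stays in the interior (Section~\ref{sub:tangent_cones}), so the geodesic from $x$ to $z$, traversed from the $z$-endpoint, is a quasigeodesic $\gamma:[a,b)\To X$ with $\gamma(a)=z$ and $\gamma((a,b))$ contained in the interior of $X$. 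The previous Lemma then gives $p(a)=|\nabla_z f|\leq p_0$, where $p_0=|\nabla_{x'} f|$ for every $x'\in\gamma((a,b))$, and in particular $p_0 = |\nabla_x f|$ if $x$ itself is $\gamma(t)$ for some interior parameter — which we can ensure by choosing the geodesic to run from $x$ to $z$ and noting $x=\gamma(b^-)$ is approached from within the interior, so $|\nabla_x f|\geq p_0$ by lower semicontinuity while the Lemma gives $|\nabla_{\gamma(t)}f|=p_0$; combined this yields $|\nabla_z f|\leq p_0 = |\nabla_x f|$.

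Next I would deduce that $|\nabla_x f|$ is constant on the interior: applying the inequality $|\nabla_z f|\leq|\nabla_x f|$ with both $x$ and $z$ interior, and then with their roles reversed, gives equality. Call this common value $L_0$. It is finite by Corollary~\ref{lem:loLip}. It is nonzero — indeed positive — because $f$ is nonconstant: if $|\nabla_x f|$ vanished identically on the dense interior, then $f$ would be locally constant there, hence constant on the (convex, connected) interior, hence constant on $X$ by continuity (Corollary~\ref{cor:affine_continuity}), contradicting the standing assumption. And for an arbitrary $z\in X$ we have just shown $|\nabla_z f|\leq L_0$ as well.

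Finally I would upgrade this pointwise gradient bound to the global Lipschitz estimate. By the formula quoted in Section~\ref{sub:lipschitz_continuity}, $|\nabla_z f| = \max\{0,\sup_{w\neq z}(f(w)-f(z))/d(z,w)\}$, so $|\nabla_z f|\leq L_0$ for all $z$ means $f(w)-f(z)\leq L_0\, d(z,w)$ for every ordered pair $z,w$; swapping $z$ and $w$ gives $|f(w)-f(z)|\leq L_0\, d(z,w)$, i.e.\ $f$ is $L_0$-Lipschitz. That $L_0$ is optimal is immediate: any Lipschitz constant $L$ of $f$ satisfies $|\nabla_x f|\leq L$ at every point, so $L\geq L_0$.

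The main obstacle I anticipate is the bookkeeping around the endpoints of the quasigeodesic: making sure that the point $z$ (arbitrary, possibly on the boundary) can be taken as the initial endpoint $\gamma(a)$ of a quasigeodesic whose interior lies in $\interior(X)$, and simultaneously that the interior reference point $x$ is genuinely approached from within $\gamma((a,b))$ so that lower semicontinuity gives $|\nabla_x f|\geq p_0$ rather than the useless reverse inequality. Running the geodesic from $x$ (interior) toward $z$ and invoking \cite[Thm.~1.1\,A]{Pe98} handles the ``stays in the interior'' point cleanly; the only care needed is that a geodesic is a quasigeodesic, which is standard, and that $\gamma^+$ at the $z$-end is well defined, which we do not actually need. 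Everything else is a direct consequence of the preceding lemma and the cited semicontinuity.
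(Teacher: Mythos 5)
Your overall strategy is the paper's, but the key step --- transferring the constant value $p_0$ of $|\nabla f|$ along the quasigeodesic to the interior point $x$ --- contains a sign error that leaves a genuine gap. You parametrise the geodesic so that $z=\gamma(a)$ and $x=\gamma(b^-)$ is the terminal endpoint, and you claim that lower semicontinuity gives $|\nabla_x f|\geq p_0$. It gives the opposite: the statement quoted in Section \ref{sub:lipschitz_continuity} is $\liminf |\nabla_{x_i}f|\geq|\nabla_{x}f|$ for $x_i\To x$, so approaching $x$ along $\gamma((a,b))$ yields $|\nabla_x f|\leq p_0$ --- exactly the ``useless reverse inequality'' you were worried about. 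Combined with $|\nabla_z f|=p(a)\leq p_0$ this gives no comparison between $|\nabla_z f|$ and $|\nabla_x f|$. Applying the preceding Lemma to the reversed parametrisation (starting at $x$) does not help either: it again only yields $|\nabla_x f|\leq p_0$, because the Lemma pins down the value of $|\nabla f|$ only at \emph{interior} parameters of the quasigeodesic, while at an endpoint one only ever gets ``$\leq p_0$''.

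The fix is the paper's one-line argument: since $x$ lies in the interior of $X$, extend the geodesic $[zx]$ \emph{through} $x$ to a quasigeodesic $\gamma:[a,b)\To X$ with $\gamma((a,b))\subset\interior X$ and $x=\gamma(c)$ for some $c\in(a,b)$; such an extension exists and stays in the interior, exactly as used inside the proof of the preceding Lemma. Then the Lemma gives $|\nabla_x f|=p(c)=p_0\geq p(a)=|\nabla_z f|$ directly, with no appeal to semicontinuity. The rest of your argument --- constancy on the interior by swapping the roles of two interior points, finiteness from Corollary \ref{lem:loLip}, and the upgrade from the pointwise bound $|\nabla_z f|\leq L_0$ to the global Lipschitz estimate via the formula $|\nabla_z f|=\max\{0,\sup_{w\neq z}(f(w)-f(z))/d(z,w)\}$ together with the optimality of $L_0$ --- is correct and is what the paper intends.
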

\begin{proof} Extend a geodesic from $y$ to $x$ through $x$ to a quasigeodesic that contains $x$ in its interior. 
Now the claim follows from the preceding lemma.
\end{proof}

In particular, we have the following.

\begin{corollary} \label{cor:sequence_dense}
For any point in the interior of $X$ we have $|\nabla_x (-f)|= |\nabla_x f|=||f||$.
\end{corollary}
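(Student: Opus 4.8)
The plan is to deduce this from Corollary~\ref{cor:abs_grad} applied to both $f$ and $-f$, which is legitimate because $-f$ is again a non-constant locally Lipschitz continuous affine function, so everything proven so far applies verbatim to it. By Corollary~\ref{cor:abs_grad} the quantities $|\nabla_x f|$ and $|\nabla_x(-f)|$ are each constant on the interior of $X$; call these constants $L_0$ and $L_0'$. It remains to show $L_0=L_0'$, and that this common value equals the optimal Lipschitz constant $\|f\|=\|-f\|$. The latter is already contained in Corollary~\ref{cor:abs_grad}: it identifies $L_0$ with the optimal Lipschitz constant of $f$, which is the same as that of $-f$, hence $L_0' = \|-f\| = \|f\| = L_0$.

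If one prefers a more self-contained argument for $L_0 = L_0'$, here is the approach I would take. Fix an interior point $x$ and use the directional derivative $d_x f : T_x \to \R$, which is a Lipschitz affine function on the Euclidean-type cone $T_x \in \Alex^n(0)$ with $\|d_x f\| \le \|f\|$, and similarly $d_x(-f) = -d_x f$. Pick any point $z\neq x$ in the interior with $f(z)>f(x)$ (such a $z$ exists since $f$ is non-constant, after possibly replacing $f$ by $-f$); extending a geodesic $[zx]$ beyond $x$ to a quasigeodesic $\gamma$ with $\gamma(0)=x$ and $\gamma$ contained in the interior near $0$, affineness of $f$ along $\gamma$ forces $f$ to strictly decrease in the direction $\gamma^+(0)$, and by the displayed formula $|\nabla_x f|= \max\{0,\sup_{z\neq x}(f(z)-f(x))/d(x,z)\}$ together with its analogue for $-f$, both absolute gradients are realized as genuine slopes along quasigeodesics. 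Now fix the unit vector $\xi = \xi_x \in \Sigma_x$ at which $d_x f$ attains its maximum $L_0 = d_xf(\xi)$, and take a quasigeodesic $\eta$ with $\eta(0)=x$, $\eta^+(0)=\xi$, running into the interior for $t>0$ and, after extending backwards, also for $t<0$. Since $f\circ\eta$ is affine with $(f\circ\eta)'(0) = d_xf(\xi) = L_0$, it has constant slope $L_0$, so along $\eta$ traversed in the reverse direction $f$ decreases at rate $L_0$; hence $d_x(-f)$ attains the value $L_0$ in the direction $\eta^+(0^-)$, giving $L_0' = |\nabla_x(-f)| \ge L_0$. By symmetry (swapping the roles of $f$ and $-f$) we get $L_0 \ge L_0'$, hence equality.

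Finally, to conclude $L_0 = \|f\|$ directly: the inequality $L_0 = |\nabla_x f| \le \|f\|$ is immediate from the slope formula, while for the reverse inequality one notes that for any two points $y,w$ with, say, $f(w) > f(y)$, a geodesic $[yw]$ can be extended to a quasigeodesic passing through an interior point $x'$, and affineness of $f$ along this quasigeodesic together with $|\nabla_{x'} f| = L_0$ bounds $(f(w)-f(y))/d(y,w)$ by $L_0$; taking the supremum gives $\|f\| \le L_0$. Since the same reasoning applies to $-f$, we obtain $|\nabla_x(-f)| = |\nabla_x f| = \|f\|$ for every interior point $x$, which is the assertion.

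I expect the only real subtlety to be ensuring that the relevant quasigeodesics can be chosen to lie in the interior of $X$ near the base point (both forwards and, after extension, backwards), so that Corollary~\ref{cor:abs_grad} and the affineness of $f$ along quasigeodesics in the interior actually apply; this is exactly the kind of extension-through-an-interior-point trick already used in the proof of Corollary~\ref{cor:abs_grad}, and it goes through here because the interior is convex and a geodesic starting in the interior stays in the interior (Section~\ref{sub:tangent_cones}). Everything else is a routine repetition of the already-established results for the function $-f$.
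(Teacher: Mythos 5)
Your first paragraph is exactly the paper's (implicit) argument: the corollary is deduced directly from Corollary~\ref{cor:abs_grad} applied to both $f$ and $-f$, using that the optimal Lipschitz constants of $f$ and $-f$ coincide. The additional self-contained argument is sound but unnecessary; the proposal is correct and takes the same route as the paper.
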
 

\begin{remark}
The analogous statement in \cite{LS07} only holds on a convex and dense subset.
\end{remark} 

Let $x$ be a point in the interior of $X$ and consider the unit vectors $v^{\pm}$ in $T_x$ 
with $d_xf (v^{\pm})= \pm |\nabla_x f|$. Since $d_xf$ is $||f||$-Lipschitz, we must have $d(v^+,v^-)=2$, 
that is the concatenation of the homogenous rays $\gamma^{\pm}(t)=t v^{\pm}$ is a line in $T_x$ (cf. \cite{LS07}). By construction $(d_x f \circ \gamma)'=||f||$ and hence $||d_x f||=||f||$. This proves the following statement.

\begin{lemma} \label{lem:isometric_embedding} For any point $x$ in the interior of $X$ the map $\Aff (X) \To \Aff(T_x X)$, $[f] \mapsto [d_x f]$ is an isometric embedding.
\end{lemma}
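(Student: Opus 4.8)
The goal is to show that $[f]\mapsto[d_xf]$ defines an isometric embedding $\Aff(X)\to\Aff(T_xX)$ for $x$ in the interior of $X$. The plan is to verify, in order, (i) linearity and well-definedness, (ii) that the map does not increase norms, and (iii) that it does not decrease norms — the last being where the real content lies and where the material assembled just before the lemma does its work.

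First I would record that $d_x$ is linear in $f$ and sends constant functions to constant functions, so it descends to a linear map $\Aff(X)\to\Aff(T_xX)$; here I use that $d_xf$ is affine on $T_xX$ and $||f||$-Lipschitz, which was established in Section~\ref{sub:lipschitz_continuity}. For the norm estimate $||d_xf||\le||f||$, I would invoke the same fact: since $d_xf$ is $||f||$-Lipschitz on $T_xX$ and affine, its optimal Lipschitz constant — which is its norm in $\Aff(T_xX)$ — is at most $||f||=||[f]||_{\Aff(X)}$. (Concretely, $|\nabla_{o_x}d_xf|\le|\nabla_xf|$ by passing to directional derivatives of $d_xf$, but the Lipschitz bound already suffices.)

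The heart of the argument is the reverse inequality $||d_xf||\ge||f||$, and this is exactly the content of the displayed computation preceding the lemma statement. By Corollary~\ref{cor:sequence_dense}, since $x$ is interior, $|\nabla_x f|=|\nabla_x(-f)|=||f||$, so there exist unit vectors $v^\pm\in\Sigma_x$ with $d_xf(v^\pm)=\pm||f||$. Because $d_xf$ is $||f||$-Lipschitz we get $2||f||=d_xf(v^+)-d_xf(v^-)\le||f||\,d(v^+,v^-)\le 2||f||$, forcing $d(v^+,v^-)=2$; hence the concatenation of the two homogeneous rays $\gamma^\pm(t)=tv^\pm$ is a geodesic line $\gamma$ in $T_xX$, along which $(d_xf\circ\gamma)'\equiv||f||$. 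This exhibits a unit-speed geodesic in $T_xX$ on which $d_xf$ has slope exactly $||f||$, so the optimal Lipschitz constant of $d_xf$ is at least $||f||$, i.e. $||d_xf||\ge||f||$. Combining with the previous paragraph gives $||d_xf||=||f||$, which is the required isometry property; that $[f]\mapsto[d_xf]$ is then in particular injective, hence an isometric embedding, is immediate.

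The main obstacle is really just the existence of the maximizing directions $v^\pm$ with $d(v^+,v^-)=2$, i.e. the fact that the gradient directions of $f$ and $-f$ at an interior point are antipodal in $\Sigma_x$; but this has already been reduced, via Corollary~\ref{cor:sequence_dense}, to the statement $|\nabla_xf|=|\nabla_x(-f)|=||f||$ at interior points, so no genuinely new work is needed — one only has to chain the Lipschitz inequality to conclude antipodality and then read off the slope along the resulting line. The interiority hypothesis enters precisely through Corollary~\ref{cor:sequence_dense} (equivalently Corollary~\ref{cor:abs_grad}), which fails at boundary points, matching the remark that in \cite{LS07} the analogous statement holds only on a convex dense subset.
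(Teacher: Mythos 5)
Your proposal is correct and follows essentially the same route as the paper: the displayed discussion preceding the lemma is exactly your third paragraph, namely using Corollary~\ref{cor:sequence_dense} to produce the antipodal maximizing directions $v^{\pm}$, concatenating the homogeneous rays into a line in $T_xX$ along which $d_xf$ has slope $\|f\|$, and combining with the elementary bound $\|d_xf\|\le\|f\|$. You merely spell out the chain inequality forcing $d(v^+,v^-)=2$ a bit more explicitly than the paper does.
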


For a regular point $x$ in $X$, the tangent cone $T_x X$ is isometric to $\R^n$. Since $\Aff(T_x X)$ is a Hilbert space, so is $\Aff (X)$ by the preceding lemma.

\begin{lemma}\label{lem:affine_hilbert} The Banach space $\mathcal{A}(X)$ of Lipschitz continuous 
affine functions on $X$ is a Hilbert space of dimension not larger than $n$.
\end{lemma}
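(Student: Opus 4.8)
The plan is to combine the two facts already established in the excerpt. By Lemma \ref{lem:isometric_embedding}, for any interior point $x$ the linear map $[f]\mapsto[d_xf]$ is an isometric embedding of $\Aff(X)$ into $\Aff(T_xX)$. Since the set of regular points is dense in $X$ \cite[Cor.~10.9.13]{MR1835418} and the interior of $X$ is open, I can pick a regular interior point $x$; then $T_xX$ is isometric to $\R^n$, so $\Aff(T_xX)=\Aff(\R^n)$, which is the space of linear functionals on $\R^n$ with the Euclidean operator norm — a Hilbert space of dimension exactly $n$. An isometric linear embedding into a Hilbert space of dimension $n$ forces the source to be a Hilbert space (the parallelogram identity is inherited) of dimension at most $n$. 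This gives both assertions at once.

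First I would fix the regular interior point: regular points are dense, and to get one in the interior I note that the regular points form a convex set \cite[Cor.~1.10]{Pe98} while the interior is also convex and open, and both are dense, so their intersection is nonempty (e.g. the interior is open and dense, hence meets the dense set of regular points). Alternatively one may invoke that regular points of $X$ are interior points, since a tangent cone isometric to $\R^n$ has empty-boundary space of directions $S^{n-1}$. Second, I would identify $\Aff(T_xX)$: a Lipschitz affine function on $\R^n$ is an affine-linear map, and modulo constants these are exactly the linear functionals, so $\Aff(\R^n)\cong(\R^n)^*$ as normed spaces, with the norm being the Euclidean norm of the gradient; in particular it is an $n$-dimensional Hilbert space. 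Third, via Lemma \ref{lem:isometric_embedding} the composition $\Aff(X)\hookrightarrow\Aff(T_xX)\cong(\R^n)^*$ is an isometric linear embedding, so $\Aff(X)$ is isometrically isomorphic to a linear subspace of an $n$-dimensional Hilbert space, hence itself a Hilbert space of dimension $\le n$.

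I do not anticipate a genuine obstacle here, since the lemma is essentially a corollary of Lemma \ref{lem:isometric_embedding} together with the density of regular points; the only point requiring a word of care is the existence of a point that is simultaneously regular and interior, which follows from the density and convexity statements recalled in Section \ref{sub:tangent_cones}. If one wished to avoid even that, one could instead use any interior point $x$, observe $\Aff(X)$ embeds isometrically into $\Aff(T_xX)$, and then iterate into $\Aff(\Sigma_x)$-type cones until reaching a regular tangent cone; but the direct argument via a regular interior point is cleaner and is what I would write.
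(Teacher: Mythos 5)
Your proposal is correct and is essentially the argument the paper gives: immediately before the lemma the authors note that for a regular point $x$ the tangent cone $T_xX$ is isometric to $\R^n$, so $\Aff(T_xX)$ is a Hilbert space of dimension $n$, and Lemma \ref{lem:isometric_embedding} then transfers this to $\Aff(X)$. Your additional remarks (that a regular point is automatically interior, and that the parallelogram law is inherited under an isometric linear embedding) just make explicit what the paper leaves implicit.
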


\section{Normalization}

\subsection{Basic splitting results}

\begin{lemma}\label{lem:line_splitting} Let $X \in \Alex^n(0)$ and $f: X \To \R$ be a Lipschitz continuous affine function. 
If for some unit-speed line  $\gamma$ in $X$ we have $\infty > (f\circ \gamma)'=||f||>0$ then $X$ splits as $X=Z\times \R$ 
and $f$ is given by $f(z,t)=||f|| t$.
\end{lemma}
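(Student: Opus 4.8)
The plan is to exhibit the splitting $X = Z \times \R$ directly by using the sub/superlevel structure of $f$ together with the Toponogov splitting theorem for nonnegatively curved Alexandrov spaces. Without loss of generality normalize $||f|| = 1$, so $f$ is $1$-Lipschitz and $(f\circ\gamma)' = 1$ along the given line $\gamma$; thus $\gamma$ is a gradient line of $f$. First I would argue that $f$ has no critical points and in fact $|\nabla_x f| = 1$ everywhere: since $T_xX \in \Alex^n(0)$ has the line coming from $\pm\gamma^{\pm}(0) \in \Sigma_x$ (via the argument preceding Lemma \ref{lem:isometric_embedding}, once we know $|\nabla_x f| = 1$), and by Corollary \ref{cor:abs_grad} applied in the boundaryless homogeneous setting — here $X \in \Alex^n(0)$ need not be boundaryless, but the line $\gamma$ passes through interior points, so $|\nabla_x f| = 1$ holds on the interior, and by lower semicontinuity and the gradient-flow reaching into the interior it holds everywhere. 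Hence for every $x$, $d_xf$ attains both values $\pm 1$ on unit vectors $v^{\pm} \in \Sigma_x$ with $d(v^+, v^-) = 2$.

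Next I would invoke the gradient flow of $f$ (equivalently of the $\lambda$-concave, in fact affine/linear, function $f$) on $X$: because $|\nabla f| \equiv 1$ and $f$ is affine, the gradient flow $\Phi_s$ is by unit-speed quasigeodesics on which $f$ increases at unit rate, and $f \circ \Phi_s(x) = f(x) + s$. The key point is that these gradient curves are in fact \emph{lines}: running the flow backwards also produces unit-speed quasigeodesics along which $f$ decreases at unit rate, and concatenating gives a curve $c_x : \R \to X$ with $f(c_x(t)) = f(x) + t$ and $c_x$ locally a quasigeodesic; since $f$ is $1$-Lipschitz, $d(c_x(t), c_x(t')) \ge |f(c_x(t)) - f(c_x(t'))| = |t - t'|$, while $c_x$ being a unit-speed quasigeodesic it has length $|t-t'|$ between these parameters, forcing $d(c_x(t), c_x(t')) = |t-t'|$, i.e. $c_x$ is a line. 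Then I would set $Z := f^{-1}(0)$ and define $\Psi : Z \times \R \to X$ by $\Psi(z, t) := c_z(t) = \Phi_t(z)$. Surjectivity is clear since $c_x$ hits the level $f^{-1}(0)$; injectivity follows because two gradient lines through points of different $f$-value at the same height would have to coincide there by uniqueness of gradient curves in Alexandrov spaces, and $c_z(t) = c_{z'}(t')$ forces $f(z)+t = f(z')+t'$, hence $t = t'$ and then $z = z'$.

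Then I would prove $\Psi$ is an isometry when $Z$ carries the induced metric. That $\Psi$ is $1$-Lipschitz-ish in the right sense and distance-nonincreasing is routine; the substance is the reverse inequality $d_X(\Psi(z,t), \Psi(z',t'))^2 \ge d_Z(z,z')^2 + (t-t')^2$. For this I would use the first variation / Toponogov comparison: the line $c_z$ and the point $\Psi(z', t')$ together with the angle condition — since $c_z$ is a line and $X \in \Alex^n(0)$, the Busemann function $b$ of $c_z$ (one of its ends) is affine, $1$-Lipschitz, and by the classical splitting theorem for nonnegatively curved Alexandrov spaces (Milka \cite{Mi67}, cf. the discussion in the introduction) $X$ splits \emph{isometrically} as $X = X_0 \times \R$ along any line. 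The remaining task is to identify this $\R$-factor with the one given by $f$: the Busemann function $b$ of $c_z$ satisfies $b = -f + \text{const}$ because both are affine $1$-Lipschitz functions with the same gradient line $c_z$, and an affine function on a product $X_0 \times \R$ that is $1$-Lipschitz with gradient in the $\R$-direction must be (up to sign and constant) the coordinate projection; hence $f(z,t) = t$ after identification.

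The main obstacle I anticipate is the step showing $|\nabla_x f| \equiv 1$ on all of $X$ (including possible boundary points) and that the gradient curves are genuinely bi-infinite lines rather than merely quasigeodesic rays — one must be careful that the gradient flow of $f$ (which is concave, being affine) is defined for all positive time and that $-f$ (also affine) generates the backward flow, so that concatenation yields a line; the Lipschitz bound then upgrades the quasigeodesic to a geodesic line, at which point the known splitting theorem applies. A secondary subtlety is pinning down that the $\R$-factor produced by Milka's splitting along $c_z$ is exactly the one parametrized by $f$, i.e. that $Z = f^{-1}(0)$ is isometric to $X_0$ and sits as $X_0 \times \{0\}$; this is where the characterization of $1$-Lipschitz affine functions on a metric product via their gradient direction does the work.
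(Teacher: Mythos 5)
There is a genuine gap, and the route is far more roundabout than it needs to be. The gap: your claim that $|\nabla_x f|=1$ at \emph{every} point of $X$, including boundary points, is justified by ``lower semicontinuity and the gradient-flow reaching into the interior'', but lower semicontinuity runs the wrong way for this. For $x_i\To x$ it gives $|\nabla_x f|\leq \liminf |\nabla_{x_i}f|$, i.e.\ only the upper bound $|\nabla_x f|\leq 1$ at a boundary point approached from the interior -- which you already know from the Lipschitz bound. To get $|\nabla_x f|\geq 1$ at a boundary point you would have to exhibit a direction in $\Sigma_x$ along which $f$ increases at unit rate, and this is exactly what is not available a priori (it becomes available only \emph{after} the splitting is established, since then every point lies on a line parallel to $\gamma$). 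The same issue infects the construction of bi-infinite gradient lines $c_x$ through arbitrary $x$: without $|\nabla_x(\pm f)|=1$ everywhere, the backward flow need not exist as a unit-speed curve, so you cannot concatenate to get a line through every point. Your injectivity argument for $\Psi$ also leans on ``uniqueness of gradient curves'' in the backward direction, where gradient flows can merge; this would have to be replaced by non-branching of geodesics once the $c_x$ are known to be geodesic lines.

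The structural problem is that the hypothesis already hands you a line $\gamma$, and the splitting theorem for $\Alex^n(0)$ spaces applies to that single line with no further preparation: $X=Z\times\R$ immediately. After that, the only remaining task is to identify $f$ with $\|f\|$ times the $\R$-coordinate, and this is a short Lipschitz computation rather than a gradient-flow argument: for the line $\gamma_z$ through $(z,0)$ parallel to $\gamma$, comparing $f(\gamma(t))-f(\gamma_z(t))$ with $\|f\|\, d(\gamma(t),\gamma_z(t))=\|f\|\sqrt{d(z,z_0)^2}$ as $t\To\infty$ forces $(f\circ\gamma_z)'=(f\circ\gamma)'=\|f\|$; and if $f$ were non-constant on a slice $Z\times\{t_0\}$, a geodesic combining a direction of increase in $Z$ with the $\R$-direction would have slope exceeding $\|f\|$, contradicting the Lipschitz bound. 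Your final paragraph (identifying the Busemann function of $c_z$ with $-f$ up to a constant) implicitly needs this same computation anyway -- two affine $1$-Lipschitz functions agreeing on one line do not agree globally without it -- so the gradient-flow machinery buys you nothing and introduces the unproved boundary claim.
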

\begin{proof} According to the well-known splitting theorem \cite[Thm.~10.5.1]{MR1835418} the line 
$\gamma$ defines a line factor of $X$, that is we have a splitting $X=Z\times \R$. Let $z\in Z$ be 
arbitrary and let $\gamma_z$ be the line through $(z,0)$ parallel to $\gamma$. Since $f$ is Lipschitz continuous, 
we must have $(f\circ \gamma)'=(f\circ \gamma_z)'$. The condition $\infty > (f\circ \gamma)'=||f||>0$ 
implies that $f$ is constant on level sets $Z\times \{t_0\}$, $t_0 \in \R$. Now the last statement is clear, too.
\end{proof} 

Using the preceding lemma we obtain the following analogue of \cite[Lemma 4.2]{LS07}. The proof is the
 same as in \cite{LS07}.

\begin{lemma}\label{lem:splitting} Let $X$ be an $\Alex^n(0)$ space and $F: X \To \R^k$ be a $1$-Lipschitz 
affine map with coordinates $F_i$. Assume that there is a point $x\in X$ and unit-speed lines $\gamma_1,\ldots,\gamma_k$
 through $x$ such that $(F_i \circ \gamma_i)'=1$. Then $X$ splits as $X=Z \times \R^k$ such that $F$ is
 the projection onto the $\R^n$ factor. 
\end{lemma}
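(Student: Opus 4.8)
The plan is to prove Lemma \ref{lem:splitting} by induction on $k$, peeling off one Euclidean factor at a time using the already-established Lemma \ref{lem:line_splitting}.

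First I would handle the base case $k=1$. Here we have a $1$-Lipschitz affine function $F_1 = F$ and a unit-speed line $\gamma_1$ through $x$ with $(F_1\circ\gamma_1)' = 1$. Since $F_1$ is $1$-Lipschitz we have $\|F_1\| \le 1$, and the existence of the line along which the derivative equals $1$ forces $\|F_1\| \ge 1$, so $\|F_1\| = 1$. Then $\infty > (F_1\circ\gamma_1)' = \|F_1\| = 1 > 0$, and Lemma \ref{lem:line_splitting} applies verbatim: $X = Z\times\R$ with $F_1(z,t) = t$, i.e. $F$ is the projection onto the $\R$ factor.

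For the inductive step, suppose the statement holds for $k-1$ and we are given $F=(F_1,\dots,F_k)$ and lines $\gamma_1,\dots,\gamma_k$ through $x$. Applying the base-case argument to $F_k$ and $\gamma_k$ gives a splitting $X = Z'\times\R$ in which $F_k$ is the coordinate on the $\R$ factor. The point $x$ corresponds to $(x', s)$ for some $x'\in Z'$ and $s\in\R$. Now I would restrict attention to $Z'$, which lies in $\Alex^{n-1}(0)$ (a factor of a nonnegatively curved Alexandrov space is again an Alexandrov space of the same curvature bound), and to the map $F' = (F_1,\dots,F_{k-1})\colon Z'\to\R^{k-1}$. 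The key technical point is to produce, for each $i\le k-1$, a unit-speed line in $Z'$ through $x'$ along which $F_i$ has derivative $1$. The natural candidate is the projection $\bar\gamma_i$ of $\gamma_i$ to the $Z'$ factor; the subtlety is that this projection need not be a line, and its speed could drop. However, because $F_i$ is $1$-Lipschitz and $F_k$ is $1$-Lipschitz and $X$ splits metrically, one can argue that along $\gamma_i$ the $\R$-coordinate $F_k\circ\gamma_i$ must be constant: otherwise the combined variation of $(F_i\circ\gamma_i, F_k\circ\gamma_i)$ in $\R^2$ would have speed strictly exceeding $1$ on a set of positive measure, contradicting that $\gamma_i$ has unit speed in the product metric while $F_i$ moves at unit rate along it. Hence $\gamma_i$ is entirely contained in a slice $Z'\times\{c\}$, its projection $\bar\gamma_i$ to $Z'$ is an honest unit-speed line through $x'$, and $(F_i\circ\bar\gamma_i)' = 1$. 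The restriction $F'$ is still $1$-Lipschitz on $Z'$, so the inductive hypothesis yields $Z' = Z\times\R^{k-1}$ with $F'$ the projection onto $\R^{k-1}$. Combining, $X = Z\times\R^{k-1}\times\R = Z\times\R^k$ and $F$ is the projection onto the $\R^k$ factor, as claimed.

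I expect the main obstacle to be exactly the step just described: showing that each line $\gamma_i$ stays inside a single slice of the splitting $X = Z'\times\R$, equivalently that $F_k$ is constant along $\gamma_i$. The clean way to see this is that along $\gamma_i$, write $\gamma_i(t) = (\bar\gamma_i(t), h_i(t))$; then $\bar\gamma_i$ has speed $\sqrt{1 - (h_i')^2}$ almost everywhere, and affineness of $F_i$ gives $F_i(\gamma_i(t)) = F_i(\gamma_i(0)) + t$, so $(F_i\circ\bar\gamma_i)$ increases at rate $1$ while $\bar\gamma_i$ has speed at most $1$; since $F_i$ restricted to $Z'$ is $1$-Lipschitz, this forces $\bar\gamma_i$ to have unit speed a.e., hence $h_i' = 0$ a.e., hence $h_i$ constant. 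Once this is in place the rest is bookkeeping with the induction. Since the paper says ``The proof is the same as in \cite{LS07}'', I would, in the actual write-up, either reproduce this argument or simply cite \cite[Lemma 4.2]{LS07} and remark that the only input beyond that source is Lemma \ref{lem:line_splitting}, which has just been established in the Alexandrov setting.
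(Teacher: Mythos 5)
Your proof is correct and follows essentially the same route as the paper, which simply defers to \cite[Lemma 4.2]{LS07}: induction on $k$, splitting off one line factor via Lemma \ref{lem:line_splitting}, and then checking via the $1$-Lipschitz condition on $(F_i,F_k):X\To\R^2$ that the remaining lines lie in a single slice $Z'\times\{c\}$ and project to unit-speed lines there. The only point to make explicit in a final write-up is that each $F_i$ ($i<k$) is independent of the split-off $\R$-coordinate \emph{everywhere}, not merely along $\gamma_i$ (your inductive hypothesis only identifies $F_i$ on the slice through $x$); this follows from the very same observation you already use, since along a vertical line $t\mapsto(z',t)$ the coordinate $F_k$ has derivative $\pm1$, so the $1$-Lipschitz bound on $(F_i,F_k)$ forces $F_i$ to be constant there.
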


\subsection{Normalized maps and their regular points}

We transfer a property of the evaluation map from \cite{LS07} to the setting of $\Alex^n(\kappa)$ spaces.

\begin{definition} Let $X$ be an $\Alex^n(\kappa)$ space, $H$ be a Hilbert space and $F:X \To H$ an affine map.
 We call $F$ \emph{normalized}, if $F$ is $1$-Lipschitz and for each unit vector $h \in H$ the affine function
 $F^h: X \To \R$ given by $F^h(x)=\left\langle F(x),h \right\rangle$ satisfies $||F^h|| = 1$. 
\end{definition}
An affine function $f: X \To \R$ is normalized if and only if it has norm $1$. 

\begin{example} Let $H_0 \subset H$ be a Hilbert subspace. Then the orthogonal projection $p: H \To H_0$ is 
normalized. If $F: X \To H$ is normalized then so is the composition $p \circ F$.
\end{example}

Using the natural identification between $\Aff(X)$ and its dual $\Aff(X)^*$ we deduce from Lemma \ref{lem:evaluation}:

\begin{lemma} For each point $x\in X$ the evaluation map $E_x: X \To \Aff(X)^*$ is normalized.
\end{lemma}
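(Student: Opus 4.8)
The plan is to unwind the statement through the definitions and Lemma~\ref{lem:evaluation}. Recall that $E_x \colon X \To \Aff(X)^*$ is defined by $E_x(y)([f]) = f(y) - f(x)$, and that we are using the natural identification $\Aff(X) \cong \Aff(X)^*$, which is available because $\Aff(X)$ is a Hilbert space by Lemma~\ref{lem:affine_hilbert}. Under this identification, a unit vector $h \in \Aff(X)^*$ corresponds to a class $[f] \in \Aff(X)$ with $\|f\| = 1$, and the affine function $E_x^h \colon X \To \R$ is then $E_x^h(y) = \langle E_x(y), h\rangle = E_x(y)([f]) = f(y) - f(x)$.

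There are two things to check. First, $E_x$ is $1$-Lipschitz: this is exactly the first assertion of Lemma~\ref{lem:evaluation}. Second, for each unit vector $h$ — that is, each $[f]$ with $\|f\| = 1$ — the affine function $E_x^h$ has norm $1$. But $E_x^h(y) = f(y) - f(x)$ differs from $f$ only by the additive constant $-f(x)$, so $[E_x^h] = [f]$ in $\Aff(X)$; alternatively this is the content of the second assertion of Lemma~\ref{lem:evaluation}, namely $[E_x(\cdot)([f])] = [f]$. Since the norm on $\Aff(X)$ is the optimal Lipschitz constant and is insensitive to additive constants, $\|E_x^h\| = \|[E_x^h]\| = \|[f]\| = \|f\| = 1$. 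Thus $E_x$ is normalized.

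I expect no real obstacle here: the lemma is essentially a repackaging of Lemma~\ref{lem:evaluation} into the language of the \emph{normalized} notion just introduced, once one notes that the Hilbert space structure on $\Aff(X)$ lets one identify $\Aff(X)$ with $\Aff(X)^*$ isometrically and thereby speak of unit vectors in the target. The only point worth stating carefully is that the pairing $\langle E_x(y), h\rangle$, after the identification, literally evaluates to $f(y)-f(x)$ for the representative $f$ with $[f] = h$; everything else is immediate.

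\begin{proof}
By Lemma~\ref{lem:affine_hilbert} the Banach space $\Aff(X)$ is a Hilbert space, and we use the canonical isometric identification $\Aff(X)^* \cong \Aff(X)$. The first assertion of Lemma~\ref{lem:evaluation} states that $E_x$ is $1$-Lipschitz. It remains to check the normalization condition. Let $h \in \Aff(X)^*$ be a unit vector and let $[f] \in \Aff(X)$ be the corresponding class, so that $\|f\| = \|[f]\| = \|h\| = 1$. For $y \in X$ we have
\[
	E_x^h(y) = \langle E_x(y), h \rangle = E_x(y)([f]) = f(y) - f(x).
\]
Thus $E_x^h$ and $f$ differ by the additive constant $-f(x)$, so $[E_x^h] = [f]$ in $\Aff(X)$ (this is also the second assertion of Lemma~\ref{lem:evaluation}). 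Since the norm on $\Aff(X)$ is the optimal Lipschitz constant, which is unchanged under adding a constant, we conclude $\|E_x^h\| = \|[E_x^h]\| = \|[f]\| = 1$. Hence $E_x$ is normalized.
\end{proof}
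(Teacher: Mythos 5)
Your proof is correct and follows exactly the paper's route: the paper derives this lemma directly from Lemma~\ref{lem:evaluation} via the natural (Riesz) identification of $\Aff(X)$ with $\Aff(X)^*$, with the $1$-Lipschitz property giving the first condition and $[E_x(\cdot)([f])]=[f]$ giving $\|E_x^h\|=1$ for each unit vector $h$. You have merely written out in full the unwinding that the paper leaves implicit.
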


Given an $L$-Lipschitz continuous affine map $F:X \To H$ to a finite-dimensional Hilbert space $H$ one can 
define directional differentials $d_xF: T_x \To T_{f(x)} = H$ by setting $d_xF(v)= (F\circ \gamma)'$ for a
 quasigeodesic $\gamma$ starting at $x$ in the direction $v$ and extending homogenously. The differentials are again $L$-Lipschitz and affine.

\begin{definition}\label{dfn:regular_normalized} Let $X$ be an $\Alex^n(\kappa)$ space, $H$ be a Hilbert
 space and $F:X \To H$ a normalized affine map. We call a point $x \in X$ regular (with respect to F) if
 $T_x$ has a splitting $T_x= C_x' \times H_x$, with a Hilbert space $H_x$ such that $d_xF: T_x \To H$ is 
a composition of the projection of $T_x$ to $H_x$ and an isometry.
\end{definition}

An affine function $f: X \To \R$ is normalized if and only if its optimal Lipschitz constant is $1$. In 
this case a point $x \in X$ is regular with respect to $f$ if and only if $|\nabla_x f|=|\nabla_x (-f)|=1$ 
as the discussion preceding Lemma \ref{lem:isometric_embedding} together with Lemma \ref{lem:line_splitting} 
shows. We obtain the following analogue of \cite[Lemma 4.8]{LS07}. 

\begin{lemma}\label{lem:normalized_dense} Let $X$ be an $\Alex^n(\kappa)$ space and $F:X \To \R^n$ a 
normalized affine map. Then every point in the interior of $X$ is regular for $F$.
\end{lemma}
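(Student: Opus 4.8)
The plan is to differentiate $F$ at an interior point $x$ and then apply the splitting Lemma \ref{lem:splitting} on the tangent cone $T_x$. Let $e_1,\dots,e_n$ denote the standard basis of $\R^n$. Since $F$ is normalized, each coordinate $F^{e_i}\colon X\To\R$ is a non-constant affine function with optimal Lipschitz constant $\|F^{e_i}\|=1$, so it is in particular locally Lipschitz; by Corollary \ref{cor:sequence_dense} this gives $|\nabla_x F^{e_i}|=|\nabla_x(-F^{e_i})|=1$ at every point $x$ in the interior of $X$.

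The first step is to produce, for each $i$, a unit-speed line in $T_x$ through the vertex $o_x$ along which the $i$-th coordinate of $d_xF$ has slope $1$. Applying the discussion preceding Lemma \ref{lem:isometric_embedding} to $f=F^{e_i}$, there are unit vectors $v_i^{\pm}\in T_x$ with $d_xF^{e_i}(v_i^{\pm})=\pm1$, and since $d_xF^{e_i}$ is $1$-Lipschitz this forces $d(v_i^+,v_i^-)=2$; hence the concatenation $\gamma_i$ of the homogeneous rays $t\mapsto t v_i^{\pm}$ is a unit-speed line in $T_x$ with $\gamma_i(0)=o_x$. Because $d_xF^{e_i}$ is the $i$-th coordinate $(d_xF)_i$ of the map $d_xF\colon T_x\To\R^n$ and is obtained by extending homogeneously over the cone $T_x$, we get $(d_xF)_i\circ\gamma_i(t)=t$, so $((d_xF)_i\circ\gamma_i)'=1$.

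The second step is to feed this into Lemma \ref{lem:splitting}. The map $d_xF\colon T_x\To\R^n$ is $1$-Lipschitz and affine and $T_x\in\Alex^n(0)$; the lines $\gamma_1,\dots,\gamma_n$ all pass through the common point $o_x$ and satisfy $((d_xF)_i\circ\gamma_i)'=1$. Hence $T_x$ splits as $T_x=Z\times\R^n$ with $d_xF$ the projection onto the $\R^n$ factor. Taking $H_x=\R^n$ and $C_x'=Z$, this is exactly the assertion that $x$ is regular for $F$ in the sense of Definition \ref{dfn:regular_normalized}. (Comparing dimensions forces $Z$ to be a point, so in fact $T_xX\cong\R^n$; this stronger conclusion will not be needed here.)

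I do not expect a genuine obstacle: essentially all of the content is already packaged in Lemmas \ref{lem:line_splitting} and \ref{lem:splitting} together with Corollary \ref{cor:sequence_dense}. The points needing a moment of care are bookkeeping: one must check that the hypothesis $\infty>(f\circ\gamma)'=\|f\|>0$ of Lemma \ref{lem:line_splitting}, which underlies Lemma \ref{lem:splitting}, holds — it does, since $1$-Lipschitzness of $F$ gives $\|F^{e_i}\|\le1$ while the line $\gamma_i$ gives $\|F^{e_i}\|\ge1$ — and that the differentials $d_xF^{e_i}$, the gradients, and the quasigeodesic constructions are all legitimately available because $x$ is an interior point, where the regularity theory of Section \ref{sub:lipschitz_continuity} applies. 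The one conceptual input that is really used, but which we get to quote, is that the slope conditions along the $\gamma_i$ force the line directions to be mutually orthogonal so that the Euclidean factor is exactly $\R^n$; this is the heart of Lemma \ref{lem:splitting}.
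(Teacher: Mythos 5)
Your proposal is correct and follows essentially the same route as the paper: the paper's proof likewise reduces to $|\nabla_x f_i|=|\nabla_x(-f_i)|=1$ at interior points via Corollary \ref{cor:sequence_dense}, builds the lines in $T_x$ from the discussion preceding Lemma \ref{lem:isometric_embedding}, and concludes with Lemma \ref{lem:splitting}. Your version just spells out the details the paper leaves implicit.
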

\begin{proof} The proof is the same as in \cite{LS07}: Let $f_i$, $i=1\ldots,n$ be the coordinates of
 $F$. If a point $x\in X$ is regular, we must have $|\nabla_x f_i|=|\nabla_x (-f_i)|=1$ for all $i$. 
On the other hand, such a point $x$ is regular, due to Lemma \ref{lem:splitting} and the observations 
preceding Lemma \ref{lem:isometric_embedding}. The statement now follows from Corollary \ref{cor:sequence_dense}.
\end{proof}

\section{Proof of Theorem 1.3}

\subsection{The pseudometric}

The following statement is needed in the proof of Theorem \ref{thm:char_affine}.

\begin{proposition}\label{prp:pseudometric}
Let $X$ be an $\Alex^n(\kappa)$ space, $H$ a Hilbert space and $F:X \To H$ a normalized affine map.
 Then $\tilde d: X \times X \To [0,\infty )$ given by \[\tilde d(y,z)= \sqrt{d(y,z)^2-||F(y)-F(z)||^2}\] 
defines a pseudometric on $X$. \end{proposition}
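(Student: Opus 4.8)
The plan is to verify the one nontrivial axiom, the triangle inequality $\tilde d(x,z)\le\tilde d(x,y)+\tilde d(y,z)$; nonnegativity of $\tilde d$ is precisely the statement that $F$ is $1$-Lipschitz, and symmetry and $\tilde d(y,y)=0$ are clear. Since $\tilde d$ is continuous (it is built from $d$ and the Lipschitz map $F$) and the regular points, hence the interior, are dense, it suffices to treat $x,y,z$ in the interior of $X$. The elementary fact I would use throughout is \emph{additivity along geodesics}: if $w$ divides a geodesic $[yz]$ in the ratio $\lambda:(1-\lambda)$ then, $F$ being affine, $\|F(y)-F(w)\|=\lambda\|F(y)-F(z)\|$ and $\|F(w)-F(z)\|=(1-\lambda)\|F(y)-F(z)\|$, so $\tilde d(y,w)=\lambda\,\tilde d(y,z)$, $\tilde d(w,z)=(1-\lambda)\,\tilde d(y,z)$ and $\tilde d(y,w)+\tilde d(w,z)=\tilde d(y,z)$; in particular subdividing the geodesic pieces of a chain does not change its $\tilde d$-length.

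First I would reduce the triangle inequality to an infinitesimal statement at $y$. Put $a=d(x,y)$, $b=d(y,z)$, $c=d(x,z)$, fix geodesics $[yx],[yz]$ with initial directions $\xi,\eta\in\Sigma_y$, set $\theta=\angle_y(x,z)=d_{\Sigma_y}(\xi,\eta)$ and $P=\|d_yF(\xi)\|$, $Q=\|d_yF(\eta)\|$. Since $F$ is affine, $d_yF(\xi)=\tfrac1a\bigl(F(x)-F(y)\bigr)$ and $d_yF(\eta)=\tfrac1b\bigl(F(z)-F(y)\bigr)$; substituting these into $\|F(x)-F(z)\|^2$ and squaring, the inequality $\tilde d(x,z)\le\tilde d(x,y)+\tilde d(y,z)$ is seen to be equivalent to
\[
   c^2\ \le\ a^2+b^2-2ab\,B,\qquad B:=\langle d_yF(\xi),d_yF(\eta)\rangle-\sqrt{(1-P^2)(1-Q^2)}.
\]
Thus the whole problem comes down to estimating the tangent quantity $B$.

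The key claim is $B\le\cos\theta$, and this is a clean statement about $T_y$. Take a shortest geodesic of $\Sigma_y$ from $\xi$ to $\eta$, of length $\theta\le\pi$; the cone over it is isometric, by the cone metric formula, to a Euclidean sector $S_\theta\subseteq\R^2$ of angle $\theta$ sitting inside $T_y$. The map $d_yF\colon T_y\to H$ is affine, $1$-Lipschitz and sends the apex to $0$, so its restriction to this sector is the restriction of a linear map $L\colon\R^2\to H$; since $S_\theta-S_\theta=\R^2$, the $1$-Lipschitz property forces $\|L\|_{\mathrm{op}}\le1$, i.e.\ $M:=\id-L^{*}L\succeq0$. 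Writing $\xi,\eta$ now for the unit vectors of $S_\theta$ enclosing the angle $\theta$, the Cauchy--Schwarz inequality for the semidefinite form $M$ gives
\[
  \langle d_yF(\xi),d_yF(\eta)\rangle-\cos\theta\ =\ -\langle\xi,M\eta\rangle\ \le\ \sqrt{\langle\xi,M\xi\rangle\,\langle\eta,M\eta\rangle}\ =\ \sqrt{(1-P^2)(1-Q^2)},
\]
which is exactly $B\le\cos\theta$. For $\kappa\ge0$ this finishes the argument: when $\kappa>0$ the space has finite diameter and hence no lines, which forces $F$ to be constant (a non-constant coordinate would have unit gradient everywhere in the interior by Corollary~\ref{cor:sequence_dense}, and its gradient curves would assemble into a line), so $\tilde d=d$; when $\kappa=0$, Alexandrov comparison gives $c^2\le a^2+b^2-2ab\cos\theta\le a^2+b^2-2ab\,B$, the inequality above.

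The case $\kappa<0$ is where the real work lies, and I expect it to be the main obstacle. Now comparison yields the above inequality only up to a curvature error controlled by the constants $A(\kappa),r(\kappa)$ of Section~\ref{sub:spaces}: for triples of diameter $\le r(\kappa)$ one obtains $\tilde d(x,z)\le\tilde d(x,y)+\tilde d(y,z)+\Lambda(\kappa)\,\diam\{x,y,z\}^{2}$. Upgrading this \emph{local} inequality to the exact triangle inequality is the delicate point, because a naive iteration along a subdivided chain from $x$ to $z$ accumulates an error that stays proportional to the diameter and never closes up. The remedy I would use is a minimization argument: fix $p,q$ and study $\phi(y):=\tilde d(p,y)+\tilde d(y,q)$, which is continuous and, by additivity along geodesics, equals $\tilde d(p,q)$ identically on $[pq]$; using the first-variation formula for $\tilde d(p,\cdot)$ along quasigeodesics (which exist in every direction, and along which $F$ is affine), together with the inequality $B\le\cos\theta$ and the gradient-flow machinery for semiconcave functions on Alexandrov spaces \cite{QG95}, one shows that $\phi$ has no local minimum of value strictly larger than $\tilde d(p,q)$; hence $\inf\phi=\tilde d(p,q)$, which is the triangle inequality. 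This last step, where the theory of quasigeodesics is indispensable, is the crux; everything leading to $B\le\cos\theta$ is comparatively routine.
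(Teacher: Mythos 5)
Your reduction to the triangle inequality and your tangent--cone estimate are sound: the identity $\|F(x)-F(y)\|=d(x,y)\,\|d_yF(\xi)\|$ (valid because $F$ is affine), the identification of the cone over a shortest $\Sigma_y$-geodesic from $\xi$ to $\eta$ with a Euclidean sector on which $d_yF$ restricts to a linear map of operator norm $\le 1$, and Cauchy--Schwarz for $M=\id-L^{*}L\succeq0$ together give
\[
\bigl|\cos\theta-\langle d_yF(\xi),d_yF(\eta)\rangle\bigr|\ \le\ \sqrt{(1-P^2)(1-Q^2)},
\]
which is exactly the infinitesimal input the proof needs. Your $\kappa\ge0$ argument via the Euclidean hinge comparison is then complete (the separate discussion of $\kappa>0$ is unnecessary: curvature $\ge\kappa>0$ implies curvature $\ge0$, so the same law of cosines applies, and the claim that positive curvature forces $F$ to be constant via gradient curves assembling into a line is in any case only sketched).

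The genuine gap is the case $\kappa<0$, which you yourself identify as the crux and do not close, and the scheme you sketch for it cannot work as stated. The triangle inequality is the lower bound $\phi(y)=\tilde d(p,y)+\tilde d(y,q)\ge\tilde d(p,q)$ for \emph{every} $y$, so a potential counterexample is a point where $\phi<\tilde d(p,q)$. Showing that $\phi$ has no local minimum of value \emph{strictly larger} than $\tilde d(p,q)$ says nothing about such points: even granting that the infimum is attained (properness of $X$), you would only recover $\inf\phi\le\tilde d(p,q)$, which is automatic from $\phi(p)=\tilde d(p,q)$. The route the paper takes (it simply defers to \cite{LS07}, noting that the first variation formula holds in $\Alex^n(\kappa)$ spaces \cite[Sect.~3.6]{Plaut}) dispenses with the hinge comparison altogether and thereby avoids the curvature error: for a unit-speed geodesic $\gamma$ from $y$ to $z$ and $g(t)=\tilde d(x,\gamma(t))$, the \emph{exact} first variation formula $\tfrac{d}{dt}d(x,\gamma(t))|_{t=0^+}=-\cos\theta$, combined with your Cauchy--Schwarz estimate in the direction $\cos\theta-\langle d_yF(\xi),d_yF(\eta)\rangle\le\sqrt{(1-P^2)(1-Q^2)}$, yields $g'(0^+)\ge-\sqrt{1-Q^2}$, which is minus the $\tilde d$-speed of $\gamma$; applying this at every $t$ and integrating the one-sided bound gives $\tilde d(x,z)\ge\tilde d(x,y)-\tilde d(y,z)$, i.e.\ the triangle inequality after relabelling. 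This argument is uniform in $\kappa$, so the case division disappears; you already have every ingredient it needs, but they are assembled around the wrong comparison.
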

\begin{proof} 
The proof works in the same way as the proof of \cite{LS07} in the $\mathrm{CAT}(\kappa)$ case, 
since the first variation formula also holds in $\Alex^n(\kappa)$ spaces \cite[Sect.~3.6]{Plaut}.
\end{proof}

\subsection{Isometric embedding} Given an affine normalized map  $F: X \To H$ 
and the associated pseudometric $\tilde d$ on $X$  by Proposition \ref{prp:pseudometric}, let $Y=X/\tilde d$ be the induced metric space. A point in $Y$ is an equivalence class $[x]$ where 
$x \sim x'$ if and only if $\tilde d (x,x')=0$. The map $i: X \To Y \times H$, $x \MTo ([x],F(x))$ is an isometric embedding. 
For a point $x \in X$ we define $Z_x = F^{-1}(F(x))$. 
We record further properties:

\begin{lemma} \label{lem:prop1}Let $F: X \To H$, $\tilde d$ and $Y$ be given as above.
\begin{enumerate}
\item The space $Y$ is geodesic and the projection $\pi: X \To Y$ is affine.
\item For any $x \in X$ the subsets $[x],Z_x\subset X$ are convex. The restriction $F_{|[x]}:[x] \To H$ 
is an isometric embedding by definition of $\tilde d$.
\item For any $x\in X$ the subsets $[x],Z_x\subset X$ are Alexandrov 
spaces with curvature $\geq\kappa$.
\end{enumerate}
\end{lemma}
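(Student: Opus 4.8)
The plan is to reduce all three assertions to a single computation describing how $\tilde d$ behaves along a geodesic of $X$. Let $\gamma\colon[0,\ell]\To X$ be a unit-speed geodesic with $\ell>0$. Since $F$ is affine, $F\circ\gamma$ is a constant-speed geodesic in the Hilbert space $H$, hence a straight segment traversed with speed $c:=\|F(\gamma(0))-F(\gamma(\ell))\|/\ell$, so that $\|F(\gamma(s))-F(\gamma(t))\|=c\,|s-t|$ for all $s,t\in[0,\ell]$; as $F$ is $1$-Lipschitz, $c\in[0,1]$. Combining this with $d(\gamma(s),\gamma(t))=|s-t|$ gives
\[ \tilde d(\gamma(s),\gamma(t))=\sqrt{1-c^2}\,|s-t| \qquad\text{for all } s,t\in[0,\ell]. \]
Everything below follows from this identity.

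For the geodesicity of $Y$ and the affinity of $\pi$, pass to the quotient $Y=X/\tilde d$. The identity above shows that along $\pi\circ\gamma$ the pseudodistance $\tilde d$ is additive over adjacent subintervals and proportional to the parameter; therefore $\pi\circ\gamma$ realizes the $\tilde d$-distance between its endpoints and is a constant-speed geodesic in $Y$ (a constant path if $c=1$). Since any two points of $Y$ are the $\pi$-images of two points of $X$ joined by a geodesic of $X$, the space $Y$ is geodesic; and since $\pi$ carries unit-speed geodesics to constant-speed geodesics, $\pi$ is affine.

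For the convexity statements, note first that $[x]=\{y\in X:\tilde d(x,y)=0\}$ is closed, because $y\mapsto\tilde d(x,y)$ is continuous (it is assembled from the continuous functions $d(x,\cdot)$ and $\|F(x)-F(\cdot)\|$ and the square root), and $Z_x=F^{-1}(F(x))$ is closed since $F$ is continuous. Let $\gamma$ be a geodesic joining two points $y,z$ of $[x]$. By the triangle inequality $\tilde d(y,z)=0$, so $d(y,z)=\|F(y)-F(z)\|$, hence $c=1$ in the computation above, and then $\tilde d(\gamma(s),\gamma(t))=0$ for all $s,t$, so $\gamma\subset[x]$. If instead $y,z\in Z_x$, then $F\circ\gamma$ is a constant-speed minimizing segment from $F(x)$ to $F(x)$, hence constant, so $\gamma\subset Z_x$. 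Finally, $\tilde d(y,z)=0$ means precisely $d(y,z)=\|F(y)-F(z)\|$, which is exactly the assertion that $F|_{[x]}$ is distance preserving.

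For the curvature bound on $[x]$ and $Z_x$, recall from the previous paragraph that these are closed convex subsets of $X$. A closed subset of the complete space $X$ is complete; on a convex subset $C$ the restricted metric coincides with its intrinsic length metric, since a geodesic of $X$ between two points of $C$ stays in $C$, and in particular $C$ is geodesic; and every geodesic triangle in $C$ is a geodesic triangle in $X$, so the $\Alex(\kappa)$ comparison is inherited. Hence $[x]$ and $Z_x$ are Alexandrov spaces with curvature $\geq\kappa$, of dimension $\leq n$ because Hausdorff dimension is monotone under inclusion (cf.\ \cite{MR1835418}). I do not anticipate a serious obstacle here: the whole argument rests on the affinity of $F$, which produces the displayed identity, together with the standard fact that closed convex subsets of $\Alex^n(\kappa)$ spaces inherit the lower curvature bound; the only point deserving a word of care is that in the first step additivity of $\tilde d$ along $\pi\circ\gamma$ has to be combined with the description of the quotient metric to conclude that $\pi\circ\gamma$ is minimizing in $Y$.
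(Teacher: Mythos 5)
Your proposal is correct and follows essentially the same route as the paper: the identity $\tilde d(\gamma(s),\gamma(t))=\sqrt{1-c^2}\,|s-t|$ along geodesics (the paper's $A\,|s-t|$ with $A=\sqrt{1-\bar A^2}$) yields geodesicity of $Y$ and affinity of $\pi$, convexity of $[x]$ and $Z_x$ follows from affinity of $F$ and $\pi$, and the curvature bound comes from the standard fact that closed convex subsets of an $\Alex^n(\kappa)$ space inherit the bound. You merely spell out details the paper leaves implicit, so no further comparison is needed.
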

\begin{proof} $(i)$ For a unit-speed geodesic $\gamma$ in $X$ set $\bar{A}=(F\circ \gamma)' \in H$ and
 $A= \sqrt{1-\bar{A}^2}$. Then for all $s,t$ we have $\tilde d([\gamma(s)],[\gamma(t)])=A |s-t|$, i.e. $\gamma$ 
is a geodesic with velocity $A$ with respect to $\tilde d$ proving the claim.

$(ii)$ Convexity follows, since $F$ and $\pi$ are affine. The second claim follows
immediately from the definition of $\tilde d$.

$(iii)$ By continuity of $F$ both subsets $[x],Z_x \subset X$ are closed. As closed, convex subsets of the Alexandrov space $X$ they are themselves Alexandrov spaces with the same curvature bound as $X$.
 
\end{proof}

Let $o \in X$ be a point. Due to Lemma \ref{lem:evaluation} the evaluation map $E_o$ is affine and normalized. 
We consider the special case of $H=\Aff^*$ and $F=E_o: X \To \Aff^*$. Since the value of $||E_o(x)-E_o(y)||^2$ 
does not depend on the point $o$, neither does the pseudometric $\tilde d$ given by Proposition \ref{prp:pseudometric}. 
Above we have seen that the space $Y$ is geodesic and that the embedding $i:X \To Y \times \Aff^*$, $x\MTo ([x],E_o(x))$ is isometric.
We are going to show that the data $(Y,i)$ satisfy all properties listed in Theorem \ref{thm:char_affine}.

Property $(a)$, $(b)$ and the existence part of property $(c)$ follow as in \cite[p.~12]{LS07}. For convenience we briefly recall the argument. Property $(a)$ holds by definition of $Y$. Let $f \in \tilde \Aff (X)$ be a Lipschitz continuous affine function on $X$. Define $\hat f: \Aff^* \To \R$ by 
$\hat f(\xi)=\xi([f])+f(o)$, where $[f]$ is the class of $f$ in $\Aff(X)$. Then $\hat f$ is an affine function 
on $\Aff^*$ and
\[
		\hat f(E_o(x))=E_o(x)([f])+f(o)=f(x)-f(o)+f(o)=f(x)
\]
and hence $\hat f \circ p_{\Aff^*} \circ i=f$. Applying this to a lift of a Lipschitz continuous affine function on $Y$ to $X$ shows that every Lipschitz continuous affine on $Y$ is constant. Hence, property $(b)$ holds. 
Each isometry $g$ of $X$ sends affine functions to affine functions and preserves the Lipschitz constant. Hence it induces an isometry of $\Aff^*$ and of $Y$. By construction the induced isometry of $Y \times \Aff^*$ is an extension of $g$, that is the existence part of property $(c)$ holds, too.

In view of proving Theorem \ref{thm:char_affine} it remains to establish the statement on the dimension of $\Aff^*$, the openness statement in $(d)$ and the uniqueness statements. This will be achieved in the subsequent sections.

\subsection{Local product structure} 
The tangent cones of $[x],Z_x \subset X$ at a point $x\in X$ can be regarded as subsets of $T_x X$. Recall from Lemma \ref{lem:normalized_dense} and Definition \ref{dfn:regular_normalized} that for a point $x$ in the interior of $X$ there is a splitting $T_x = C_x' \times H_x$ with $H_x$ being isometric to $\Aff^*$ and the differential $d_x E_o$ 
being given by the projection to the second factor composed with an isometry.

\begin{lemma} \label{lem:alex_dim} For a point $x$ in the interior of $X$ we have
\begin{enumerate}
\item $T_x Z_x =C_x'$. In particular, $\dim Z_x = \dim X - \dim \Aff$.
\item $T_x [x] = H_x$. In particular, $\dim [x] = \dim \Aff$.
\end{enumerate}
\end{lemma}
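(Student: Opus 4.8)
The plan is to work entirely inside the tangent cone $T_x = C_x' \times H_x$ and to identify the tangent cones of the convex subsets $[x]$ and $Z_x$ with the two factors. First I would recall that for a closed convex subset $C \subset X$ containing $x$, the tangent cone $T_x C$ embeds canonically and isometrically into $T_x X$ as the subset of directions $[xy]$ with $y \in C$, together with its completion; this is standard for Alexandrov spaces. So the content is to show $T_x Z_x = C_x' \times \{o\}$ and $T_x[x] = \{o\} \times H_x$, where we use the splitting coordinates from Lemma \ref{lem:normalized_dense} in which $d_x E_o$ is the projection onto $H_x$ followed by an isometry.

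For part (ii): since $F_{|[x]}:[x] \To \Aff^*$ is an isometric embedding (Lemma \ref{lem:prop1}(ii)), its differential $d_x(F_{|[x]}) = (d_x E_o)_{|T_x[x]}$ is an isometric embedding of $T_x[x]$ into $\Aff^*$; hence $T_x[x]$ maps isometrically into $H_x$ under the projection to the second factor, which forces $T_x[x] \subseteq \{o\} \times H_x$. For the reverse inclusion I would use that $[x]$, being a convex subset on which $F$ restricts isometrically, is itself an Alexandrov space with curvature $\geq \kappa$ (Lemma \ref{lem:prop1}(iii)), and argue that $[x]$ is geodesically complete enough in the relevant directions: more precisely, given any unit vector $h \in H_x \cong \Aff^*$, the direction $h$ is realized as $\gamma^+(0)$ of a quasigeodesic $\gamma$ in the interior of $X$, and along this quasigeodesic $E_o$ moves with unit speed (since the whole of $T_x$-length is being used up by the $H_x$ component), so $\tilde d(x,\gamma(t)) = 0$, i.e. $\gamma$ stays in $[x]$. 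This gives $\{o\} \times H_x \subseteq T_x[x]$. The dimension statement $\dim[x] = \dim \Aff$ then follows because $\dim T_x[x] = \dim H_x = \dim \Aff^* = \dim \Aff$.

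For part (i): one inclusion is again immediate, since $Z_x = F^{-1}(F(x))$ means $F$ is constant on $Z_x$, so $d_x(F_{|Z_x}) = 0$, forcing $T_x Z_x \subseteq C_x' \times \{o\}$. For the reverse inclusion $C_x' \times \{o\} \subseteq T_x Z_x$, I would again realize any direction $v \in C_x'$ by a quasigeodesic $\gamma$ in the interior of $X$ with $\gamma^+(0) = v$; since the $H_x$-component of $v$ is zero and $d_x E_o$ is the projection to $H_x$, we get $(E_o \circ \gamma)'(0) = 0$, and because $E_o \circ \gamma$ is affine it is constant, so $\gamma(t) \in Z_x$ for all $t$. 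Hence $v \in T_x Z_x$. The dimension count $\dim Z_x = \dim X - \dim \Aff$ then follows from $\dim T_x X = \dim C_x' + \dim H_x$ together with parts already established.

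The main obstacle I anticipate is the reverse inclusions, specifically the step where one wants to conclude that a quasigeodesic issuing from $x$ in a prescribed tangent direction actually lands in $[x]$ (resp. $Z_x$) for small times, rather than merely being tangent to it. This requires care because quasigeodesics need not be geodesics, so one cannot directly invoke convexity of $[x]$ to keep $\gamma$ inside it; the resolution is that the relevant functions ($E_o$, resp. each coordinate of $E_o$, resp. $\tilde d(x,\cdot)$) are controlled \emph{along quasigeodesics} — the affine functions restrict to affine functions on quasigeodesics in the interior (Section \ref{sub:qg}), and $\tilde d(x,\gamma(t))^2 = t^2 - \|E_o(\gamma(t)) - E_o(x)\|^2$, whose derivative at $0$ vanishes precisely when the full speed is carried by the $C_x'$ component. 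One also has to make sure the quasigeodesics can be chosen to stay in the interior of $X$, which is fine for short times since the interior is open and convex; and to handle the completion, noting that $T_x[x]$ and $T_x Z_x$ are closed and the homogeneous directions coming from quasigeodesics are dense in the respective factors.
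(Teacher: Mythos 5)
Your proposal is correct and follows essentially the same route as the paper: both directions of each inclusion are obtained by representing tangent directions via quasigeodesics issuing from $x$ into the interior, using that $E_o$ restricted to such quasigeodesics is affine (so a direction in $C_x'$ gives a quasigeodesic on which $E_o$ is constant, hence lying in $Z_x$, while a direction in $H_x$ gives a curve along which $E_o$ has unit speed, forcing $\tilde d(x,\gamma(t))=0$ and hence $\gamma\subset[x]$), and the forward inclusions follow from the form of $d_xE_o$ exactly as you describe. The only cosmetic difference is that the paper phrases the $H_x$ case by comparing $\|E_o(\gamma(s))-E_o(\gamma(t))\|=|s-t|$ with the $1$-Lipschitz bound to conclude $\gamma$ is in fact a geodesic in $[x]$, which is the precise version of your ``full speed is carried by the $H_x$ component'' step.
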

\begin{proof} Note that if $\gamma$ is a curve through $x$ in one of the convex subsets $Z_x$ or $[x]$, 
then it has a well-defined tangent vector $\gamma^+$ at $x$ as a curve in $X$ if and only if it has a 
well-defined tangent vector $\gamma^+$ at $x$ as a curve in $Z_x$ or $[x]$ and that in this case the 
two tangent vectors coincide (cf. \cite[2.1]{Pe07}). We represent directions at $x$ by quasigeodesics 
$\gamma$ starting at $x$. 

$(i)$ Suppose $\gamma$ is a quasigeodesic in $Z_x$. Then its tangent vector at $x$ is mapped to $0$ 
by $d_x E_o$ and hence lies in $C'_x$. If $\gamma$ is a quasigeodesic in $X$ representing a direction 
in $C'_x$, then it is mapped to a point by $E_o$, since $E_o$ is affine. This means that $\gamma$ lies 
in $Z_x$ and is thus tangent to $Z_x$.

$(ii)$ Since $E_o$ embeds the class $[x]$ isometrically into $\Aff^*$, any vector tangent to $[x]$ at $x$ belongs to $H_x$. Conversely, suppose that 
$\gamma: [0, \epsilon) \To \interior X$ is a quasigeodesic with $\gamma(0)=x$ and $\gamma^+(0) \in H_x$. 
For small $s,t$ it follows that 
\[
	|| E_o(\gamma(s))-E_o(\gamma(t))|| = || d_x E_0 (\gamma^+(0))|| \cdot |s-t|= |s-t| \geq d(\gamma(s),\gamma(t))
\]	
and hence $|| E_o(\gamma(s))-E_o(\gamma(t))||= d(\gamma(s),\gamma(t))$, since $E_o$ is $1$-Lipschitz.
 We see that $\gamma$ is a geodesic that stays in $[x]$ and that is thus tangent to $[x]$ at $x$.
\end{proof}

We obtain the following corollaries.

\begin{corollary}\label{cor:open_iso_emb} Let $x \in X$ and $r>0$ be such that $B_r(x)$ is contained
 in the interior of $X$. Then ${E_o}_{|[x] \cap B_r(x)}:[x] \cap B_r(x) \To B_r(F(x))\subset H$ is an
 isometry. 
\end{corollary}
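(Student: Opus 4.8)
The plan is to show that the restriction of $E_o$ to $[x]\cap B_r(x)$ is a surjective isometry onto $B_r(F(x))\subset H$, where $H=\Aff^*$. By Lemma \ref{lem:prop1}$(ii)$ we already know that $E_o$ restricts to an isometric embedding of the convex set $[x]$ into $H$, so the only thing to establish is that the image of $[x]\cap B_r(x)$ is exactly the ball $B_r(F(x))$. One inclusion is immediate: since $E_o$ is an isometric embedding on $[x]$ and $E_o(x)=F(x)$, the image of $[x]\cap B_r(x)$ is contained in $B_r(F(x))$. The work is in the reverse inclusion, i.e. in showing that every point of $B_r(F(x))$ is hit.

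First I would use Lemma \ref{lem:alex_dim}$(ii)$, which identifies $T_x[x]$ with the Hilbert factor $H_x$ of the tangent cone $T_x=C_x'\times H_x$, together with the fact that $d_xE_o$ maps $H_x$ isometrically onto $H=\Aff^*$. So at the infinitesimal level the map $E_o\colon [x]\to H$ is already a surjective isometry of tangent cones: every direction in $H$ is realized by a direction of $[x]$ at $x$. Next I would promote this to a genuine local statement. Given a target point $h\in B_r(F(x))$, pick the unit direction $v\in H_x\subset T_x[x]$ with $d_xE_o(v)$ pointing from $F(x)$ towards $h$, and let $\gamma\colon[0,\epsilon)\to\interior X$ be a quasigeodesic with $\gamma(0)=x$ and $\gamma^+(0)=v$; by the argument in the proof of Lemma \ref{lem:alex_dim}$(ii)$ such a $\gamma$ automatically lies in $[x]$ and is a unit-speed geodesic there, and $E_o\circ\gamma$ is the unit-speed linear ray in $H$ from $F(x)$ in the direction of $h$. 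It remains to run $\gamma$ until it reaches the point mapping to $h$; since $\gamma$ stays in $[x]$ and is isometrically mapped, and since $B_r(x)$ lies in the interior of $X$, the point at arclength $\|h-F(x)\|<r$ lies in $[x]\cap B_r(x)$ and maps to $h$. Here one should note that $[x]$, being a closed convex subset of the complete space $X$, is itself complete and geodesic, so the geodesic from $x$ towards $h$ exists globally and has length at least $r$; combined with the fact that $E_o$ is an isometry on $[x]$, this geodesic is exactly the preimage of the segment $[F(x),h]$.

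The main obstacle I anticipate is the subtlety that the quasigeodesic provided by the theory in Section \ref{sub:qg} need not be a priori globally defined or stay in the interior once it has travelled a definite distance; one must argue that as long as it remains in $B_r(x)$ it stays in the interior (which holds since $B_r(x)\subset\interior X$) and hence the construction of Lemma \ref{lem:alex_dim}$(ii)$ keeps applying, forcing the curve into $[x]$ with $E_o$-image the linear ray. The cleanest packaging is probably to bypass quasigeodesics for the surjectivity and instead argue directly inside the Alexandrov space $[x]$: since $E_o|_{[x]}$ is an isometric embedding into $H$ sending geodesics of $[x]$ to linear segments in $H$, and since by Lemma \ref{lem:alex_dim}$(ii)$ the space of directions $\Sigma_x[x]$ maps onto the full unit sphere of $H$, a standard first-variation/completeness argument shows the image is convex and open in $H$, hence contains $B_r(F(x))$ as soon as $[x]\cap B_r(x)$ is the full $r$-ball of $[x]$ about $x$, which holds because $B_r(x)\cap[x]=B_r^{[x]}(x)$ by convexity of $[x]$. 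Either way the result follows.
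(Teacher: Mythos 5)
Your main argument is correct and is essentially the paper's own proof: surjectivity onto $B_r(F(x))$ is obtained by running, for each target point $z$, a quasigeodesic from $x$ tangent to $[x]$ with $d_xE_o(\gamma^+(0))$ parallel to $z-F(x)$, observing that it stays in the interior of $X$ (hence, by the proof of Lemma \ref{lem:alex_dim}, $(ii)$, in $[x]\cap B_r(x)$) up to arclength $\|z-F(x)\|<r$, and noting that $E_o\circ\gamma$ is the linear segment ending at $z$. The alternative ``packaging'' you sketch at the end is weaker, since openness of the image of $E_o|_{[x]}$ and extendability of geodesics inside $[x]$ are not automatic in an Alexandrov space and are exactly what the quasigeodesic-plus-interior argument supplies, so your first version is the one to keep.
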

\begin{proof} 
We already know that the map in question is an isometric embedding. Due to Lemma \ref{lem:alex_dim}, $(ii)$, for a point $z\in B_r(F(x))-\{F(x)\}$ there exists a quasigeodesic $\gamma$ starting at $x$ tangent to $[x]$ such that $d_x E_o (\gamma^+(0))$ is parallel to $z-F(x)$. By assumption the restriction of $\gamma$ to $[0,T]$, $T=||z-F(x)||$, is contained in the interior of $X$. Hence, the proof of Lemma \ref{lem:alex_dim}, $(ii)$, moreover shows that the restriction of $\gamma$ to $[0,T]$ is contained in $[x] \cap B_r(x)$. Since  ${E_o}_{|[x] \cap B_r(x)} \circ \gamma$ is a line, we have $E_o(\gamma(T))=z$ and the claim follows.
\end{proof}

\begin{corollary}\label{cor:boundary_inclusion} Let $x$ be a point in the interior of $X$. Then
 the boundary of $[x]$ is contained in the boundary of $X$.
\end{corollary}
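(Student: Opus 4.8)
The plan is to prove the equivalent statement that every point of $[x]$ which lies in the interior of $X$ is already an interior point of $[x]$. Since $[x]\subset X$ is closed and convex, it is itself an Alexandrov space with curvature $\geq\kappa$ (Lemma \ref{lem:prop1}), and since the boundary of an Alexandrov space is defined inductively through its spaces of directions (Section \ref{sub:tangent_cones}), it suffices to show that for such a point $y$ the tangent cone $T_y[x]$ has empty boundary.

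First I would observe that $y\in[x]$ forces $[y]=[x]$: by Proposition \ref{prp:pseudometric} the function $\tilde d$ is a pseudometric, so $\tilde d(x,y)=0$ together with the triangle inequality gives $\tilde d(y,\cdot)\equiv\tilde d(x,\cdot)$, and hence the two equivalence classes coincide. Now I would appeal to the local structure at the interior point $y$. By Lemma \ref{lem:alex_dim}$(ii)$, applied with $y$ in place of $x$, the tangent cone $T_y[y]$ equals the Hilbert factor $H_y$ in the splitting $T_yX=C_y'\times H_y$, and $H_y$ is isometric to $\Aff^*$. Therefore $T_y[x]=T_y[y]=H_y$ is a Euclidean space of dimension $m=\dim\Aff$. (Equivalently, Corollary \ref{cor:open_iso_emb} shows that a small ball around $y$ in $[x]$ is isometric to a Euclidean ball, so $y$ is a regular point of $[x]$.)

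Finally, a Euclidean space has empty boundary: its space of directions at any point is a round sphere $S^{m-1}$, which by an immediate induction has empty boundary (and is empty when $m=0$). Hence $y\notin\partial[x]$. Since $y$ was an arbitrary point of $[x]$ lying in the interior of $X$, this shows $\partial[x]\subset\partial X$, as claimed. I do not expect a genuine obstacle here, as all of the substantive content is already contained in Lemma \ref{lem:alex_dim} and Corollary \ref{cor:open_iso_emb}; the only step deserving a little care is the reduction, namely the observation that it is enough to control tangent cones, which rests on the inductive definition of the boundary recalled in Section \ref{sub:tangent_cones}.
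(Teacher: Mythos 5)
Your proof is correct and follows the route the paper intends: the corollary is stated without proof immediately after Corollary \ref{cor:open_iso_emb}, precisely because any point $y\in[x]$ lying in the interior of $X$ satisfies $[y]=[x]$ and has a neighbourhood in $[x]$ isometric to a Euclidean $m$-ball (equivalently $T_y[x]=H_y\cong\R^m$ by Lemma \ref{lem:alex_dim}), hence is not a boundary point of $[x]$. Your reduction via the inductive definition of the boundary and the observation that round spheres have empty boundary is exactly the implicit argument.
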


In the sequel we show that a small neighborhood $U$ of a regular point $x$ in $X$ has a 
product structure with factors $Z_x \cap U$ and $[y] \cap U$, $y \in Z_x \cap U$. To 
this end, we need the first inclusion of the following continuity statement. The other inclusion is needed later in form of Corollary \ref{cor:openextension} (cf. Lemma \ref{lem:pi_open}).

\begin{lemma}\label{lem:Hausdorffcont}
Let $x_i\in X$ be a sequence of points converging to a point $x$ in the interior of $X$. 
Then for every radius $r>0$ and $\epsilon>0$
there exists $M\in\N$ such that 
$$
|[x]\cap \overline{B_r(x)},[x_i]\cap \overline{B_r(x)}|_H<\epsilon\text{ for all } i \geq M.
$$
Here $|\cdot|_H$ denotes the Hausdorff distance, cf. \cite{MR1835418}.
\end{lemma}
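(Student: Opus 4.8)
The statement concerns one-sided Hausdorff continuity of the fibers $[x_i]$ as $x_i \to x$, restricted to a fixed closed ball around $x$ sitting in the interior of $X$. The plan is to fix $r$ and $\epsilon$, assume for contradiction that the claimed bound fails for infinitely many $i$, and then produce a point that should lie in $[x]$ but stays a definite distance away, contradicting Corollary \ref{cor:open_iso_emb} (which identifies $[x]\cap B_r(x)$ isometrically with a ball in $H$ via $E_o$). The key structural inputs are: the isometric embedding $E_o\colon [y]\hookrightarrow H$ for every $y$; the local identification $T_x[x]=H_x$ from Lemma \ref{lem:alex_dim}(ii); and the fact that quasigeodesics in the direction of $H_x$ stay inside the class $[x]$ and realize distances via $E_o$.

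First I would reduce to the direction where the failure is quantitatively visible. Suppose there is $\delta>0$ and a subsequence (not relabelled) with $|[x]\cap \overline{B_r(x)},[x_i]\cap \overline{B_r(x)}|_H \geq \delta$ for all $i$. Since $E_o\colon [x]\cap B_r(x)\to B_r(F(x))$ is onto (Corollary \ref{cor:open_iso_emb}), the set $[x]\cap\overline{B_r(x)}$ is the isometric image of a full ball in $H_x\cong\Aff^*$, hence is compact; similarly each $[x_i]\cap\overline{B_r(x)}$ is compact. So one of the two one-sided Hausdorff bounds must fail along the subsequence. The bound ``$[x_i]\cap\overline{B_r(x)}$ is $\delta$-covered by $[x]\cap\overline{B_r(x)}$'' can fail: there are points $w_i\in [x_i]\cap\overline{B_r(x)}$ with $d(w_i,[x])\geq\delta$. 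The other bound can also fail: there is a point of $[x]\cap\overline{B_r(x)}$ missed by the $\delta$-neighborhood of $[x_i]$. I will handle the first case, and explain the second by the same mechanism run in the reverse direction; both are needed since $|\cdot|_H$ is symmetric. For the first case, after passing to a further subsequence $w_i\to w\in\overline{B_r(x)}$, and $x_i\to x$, I want to show $w\in[x]$, i.e. $\tilde d(w,x)=0$, equivalently $d(w,x)=\|F(w)-F(x)\|$. Because $w_i\in[x_i]$ we have $d(w_i,x_i)=\|F(w_i)-F(x_i)\|$ exactly; passing to the limit using continuity of $F=E_o$ and of $d$ gives $d(w,x)=\|F(w)-F(x)\|$, so indeed $w\in[x]$, contradicting $d(w_i,[x])\geq\delta$ for large $i$ (since $d(w,[x])=0$ but $w_i\to w$ forces $d(w_i,[x])\to 0$).

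The reverse inclusion is where the real work lies. Here I would take a point $u\in[x]\cap\overline{B_r(x)}$ and must produce, for large $i$, a point $u_i\in[x_i]\cap\overline{B_r(x)}$ with $d(u_i,u)\to 0$. The natural candidate: since $E_o$ identifies $[x_i]\cap B_{r'}(x_i)$ (for a slightly smaller radius $r'$, once $x_i$ is close enough to $x$ that this ball is still interior) isometrically onto $B_{r'}(F(x_i))$, and $F(x_i)=E_o(x_i)\to E_o(x)=F(x)$, the point $v:=E_o(u)\in B_r(F(x))$ lies in $B_{r'}(F(x_i))$ for all large $i$, so there is a unique $u_i\in[x_i]\cap B_{r'}(x_i)$ with $E_o(u_i)=v$. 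It remains to check $u_i\to u$ and that $u_i\in\overline{B_r(x)}$ for $i$ large. The latter follows once $d(u_i,x)$ is controlled: $d(u_i,x_i)=\|v-F(x_i)\|\to\|v-F(x)\|=d(u,x)\leq r$, and then $d(u_i,x)\leq d(u_i,x_i)+d(x_i,x)$. For $u_i\to u$: after passing to a subsequence, $u_i\to u'$ for some $u'\in\overline{B_r(x)}$; then $d(u',x)=\lim d(u_i,x_i)=d(u,x)$ and $F(u')=\lim F(u_i)=v=F(u)$, and moreover $d(u',x)=d(u,x)=\|F(u)-F(x)\|=\|F(u')-F(x)\|$, so $u'\in[x]$; but $[x]\cap B_r(x)$ is mapped \emph{injectively} by $E_o$ and $F(u')=F(u)$, hence $u'=u$. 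Since every subsequential limit equals $u$, the whole sequence converges to $u$. A compactness/covering argument then upgrades pointwise closeness to Hausdorff closeness: cover the compact set $[x]\cap\overline{B_r(x)}$ by finitely many $\epsilon/2$-balls centered at points $u^{(1)},\dots,u^{(N)}$, choose $M$ so that for $i\geq M$ each $u^{(j)}$ has an $\epsilon/2$-close companion $u_i^{(j)}\in[x_i]\cap\overline{B_r(x)}$, and conclude.

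\textbf{Main obstacle.} The delicate point is the reverse inclusion near the ``edge'' of the ball: a point $u\in[x]\cap\partial B_r(x)$ has $E_o(u)$ on the boundary sphere of $B_r(F(x))$, and this need not lie in the ball $B_{r'}(F(x_i))$ for the shrunken radius $r'<r$ that is forced by keeping $\overline{B_{r'}(x_i)}$ inside the interior of $X$. One fixes this by the standard trick of proving the statement first for all $r''<r$ (where the argument above applies verbatim with room to spare) and then taking a limit $r''\nearrow r$, using that $[x]\cap\overline{B_r(x)}=\overline{\bigcup_{r''<r}[x]\cap\overline{B_{r''}(x)}}$ by compactness; alternatively one applies the whole lemma with $r$ replaced by a slightly larger radius, legitimate since $x$ is in the open interior. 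I would adopt whichever phrasing is cleanest, but I expect the boundary adjustment — rather than the limiting arguments, which are routine given Corollaries \ref{cor:open_iso_emb} and \ref{cor:boundary_inclusion} — to be the one genuinely fiddly ingredient.
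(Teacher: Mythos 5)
Your first half — the inclusion of $[x_i]\cap\overline{B_r(x)}$ into an $\epsilon$-neighborhood of $[x]\cap\overline{B_r(x)}$ — is correct and is exactly the paper's argument: pass to a limit $w_i\to w$ and use $d(w_i,x_i)=\|F(w_i)-F(x_i)\|$ together with continuity of $F$ and of the distance to conclude $w\in[x]$. (A minor quibble: compactness of $[x]\cap\overline{B_r(x)}$ should be deduced from properness of $X$ and closedness of $[x]$, not from Corollary \ref{cor:open_iso_emb} applied at radius $r$, which is not available; see below.)

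The reverse inclusion contains a genuine gap. Your lifting construction applies Corollary \ref{cor:open_iso_emb} to balls $B_{r'}(x_i)$ with $r'$ close to $r$, which requires $B_{r'}(x_i)\subset\interior X$ for radii $r'$ comparable to $r$. But the lemma only assumes that the single point $x$ lies in the interior of $X$, while $r>0$ is arbitrary; the ball $\overline{B_r(x)}$ may well meet $\partial X$, and then no such $r'$ exists. Concretely, for $u\in[x]$ with $d(u,x)$ larger than the distance from $x$ to $\partial X$, the point $v=E_o(u)$ lies in no ball $B_{r'}(F(x_i))$ to which Corollary \ref{cor:open_iso_emb} applies, so your argument produces no lift $u_i$. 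Your ``main obstacle'' paragraph addresses a different, more superficial issue (points exactly on the metric sphere of radius $r$), and the proposed fixes — shrinking $r''\nearrow r$ or enlarging $r$ — do not help, since the problem already occurs for every fixed $r''$ exceeding $d(x,\partial X)$. The paper bridges precisely this gap: it first obtains Hausdorff convergence of $[x_i]\cap\overline{B_\rho(x_i)}$ to $[x]\cap\overline{B_\rho(x)}$ for a small $\rho$ with $B_{3\rho}(x)\subset\interior X$ (this part coincides with your lifting idea), and then propagates a lifted point $z_i'$ out to distance $d(x,z)$ by extending the geodesic $[x_iz_i']$ inside the convex Alexandrov space $[x_i]$. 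The extension cannot terminate early, because a premature endpoint would lie in $\partial[x_i]\subset\partial X$ by Corollary \ref{cor:boundary_inclusion}, and its limit would be a boundary point sitting in the interior of a geodesic issuing from the interior point $x$ — impossible. Non-branching then forces the extended endpoints to converge to $z$. This geodesic-extension step is the essential ingredient missing from your proposal.
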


\begin{proof}
Assume that there are points $y_i\in[x_i]\cap \overline{B_r(x)}$ such that $d(y_i,[x]\cap \overline{B_r(x)})\geq \epsilon$ 
for all $i$. Without loss of generality $y_i\to y$ for some point $y\in \overline{B_r(x)}$. 
Then $d(y,[x]\cap \overline{B_r(x)})\geq \epsilon$. On the other hand $\|F(x)-F(y)\|=\lim_{i\to \infty}\|F(x_i)-F(y_i)\|=\lim_{i\to \infty}d(x_i,y_i)=d(x,y)$. 
Hence $y\in[x]$, a contradiction. Therefore, for $i$ large, $[x_i]\cap \overline{B_r(x)}$ is 
contained in an $\epsilon$-tubular neighborhood of $[x]\cap \overline{B_r(x)}$.

Now assume that for every $i$ there is a point $z_i\in[x]\cap\overline{B_r(x)}$ with
 $d(z_i,[x_i]\cap\overline{B_r(x)})\geq\epsilon$. Then $(z_i)$ subconverges to a point 
$z\in[x]\cap\overline{B_r(x)}$ with $d(z,[x_i]\cap\overline{B_r(x)})\geq\epsilon/2$ for $i$ large. In particular, we have $z\neq x$. In the following we show that $z$ is a limit of points in $[x_i]$ which is a contradiction and thus will prove the lemma.

Since the interior of $X$ 
is open in $X$ (cf. Section \ref{sub:tangent_cones}), there exists some $\rho>0$ such that $B_{3\rho}(x)$ lies in the interior of $X$. 
Thus for large $i$ also $B_{2\rho}(x_i)$ lies in the interior of $X$. By Corollary \ref{cor:open_iso_emb} 
this implies that $[x_i]\cap B_{2\rho}(x_i)$ is isometric to an $m$-dimensional Euclidean ball of 
radius $2\rho$, where $m=\dim \Aff$. By passing to a subsequence we may assume that $[x_i]\cap \overline{B_{\rho}(x_i)}$ 
converges to $[x]\cap \overline{B_{\rho}(x)}$ in Hausdorff topology. Let $[xz]\subset [x]$ be the minimizing geodesic between $x$ and $z$ (cf. Lemma \ref{lem:prop1}, $(ii)$). 
Choose a point $z'$ on $(xz)\cap [x]\cap \overline{B_{\rho}(x)}$ and lift it to points $z'_i\in [x_i]\cap \overline{B_{\rho}(x_i)}$. 
Assume that there exists $\tau>0$ such that the maximal geodesic extension $[x_iy_i]$ of $[x_iz'_i]$ has length
$\leq d(x,z)-\tau$. Then, by Lemma \ref{lem:prop1}, $(ii)$, the point $y_i$ has to lie in the boundary of $[x_i]$ which by Corollary \ref{cor:boundary_inclusion} 
is contained in the boundary of $X$. Since the sequence $y_i\in \partial X$ is bounded we can assume that it converges to a point $y\in \partial X$. Moreover, by the
triangle inequality and the fact that geodesics in $X$ do not branch, we have $y\in \partial X \cap(xz)$. But this contradicts the fact that geodesics starting in the interior of $X$ do not pass boundary points in their interior (cf. Section \ref{sub:tangent_cones}). Therefore there exist geodesic extensions $[x_iz_i]$ of $[x_iz'_i]$ whose lengths converge to $d(x,z)$. Again, since geodesics in $X$ do not branch, the sequence $z_i\in [x_i]$ converges to $z$ and thus the lemma is proven.
\end{proof}

\begin{corollary}\label{cor:openextension}
 Let $U\subset X$ be an open subset and set $\hat U :=\pi^{-1}(\pi(U))$. Then the subset $\hat U\cap \interior X$ is open.
\end{corollary}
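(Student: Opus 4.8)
The plan is to show that every point $z\in\hat U\cap\interior X$ has a metric ball neighbourhood contained in $\hat U$; since $\interior X$ is open, shrinking this ball produces a neighbourhood inside $\hat U\cap\interior X$, which gives the claim. First I would record the set-theoretic description $\hat U=\bigcup_{x\in U}[x]$ together with the elementary equivalence: for any $w\in X$ one has $w\in\hat U$ if and only if $[w]\cap U\neq\emptyset$, because $\pi(w)\in\pi(U)$ exactly when some point of the fiber $[w]=\pi^{-1}(\pi(w))$ lies in $U$. In particular, if $z\in\hat U$ we may pick $x\in U$ with $z\in[x]$, so that $[z]=[x]$ and $x\in[z]$.

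Next I would argue by contradiction. Suppose no ball around $z$ is contained in $\hat U$; then there is a sequence $z_i\to z$ with $z_i\notin\hat U$, hence $[z_i]\cap U=\emptyset$ for all $i$. Fix $\epsilon_0>0$ with $B_{\epsilon_0}(x)\subseteq U$ and a radius $r>d(x,z)$, so that $x\in [z]\cap\overline{B_r(z)}$. Since $z$ lies in the interior of $X$, Lemma \ref{lem:Hausdorffcont} applies to the sequence $z_i\to z$ and yields, for $\epsilon=\epsilon_0/2$, an index $M$ with $|[z]\cap\overline{B_r(z)},[z_i]\cap\overline{B_r(z)}|_H<\epsilon_0/2$ for all $i\geq M$. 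Applying the inclusion ``$[z]\cap\overline{B_r(z)}$ lies in an $\epsilon$-neighbourhood of $[z_i]\cap\overline{B_r(z)}$'' to the point $x$, I obtain $w_i\in [z_i]\cap\overline{B_r(z)}$ with $d(w_i,x)<\epsilon_0$, hence $w_i\in B_{\epsilon_0}(x)\subseteq U$. This contradicts $[z_i]\cap U=\emptyset$. Therefore some $B_\delta(z)\subseteq\hat U$, and after further shrinking $\delta$ so that $B_\delta(z)\subseteq\interior X$ we conclude $B_\delta(z)\subseteq\hat U\cap\interior X$, as desired.

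The argument is short once Lemma \ref{lem:Hausdorffcont} is in hand, and there is essentially one point to watch: we use precisely the second (and, in the proof of Lemma \ref{lem:Hausdorffcont}, the more delicate) Hausdorff inclusion, the one transporting a point of the \emph{fixed} fiber $[z]$ to nearby points of the \emph{moving} fibers $[z_i]$, all measured in balls centred at the fixed point $z$. The hypothesis ``$z$ in the interior of $X$'' needed to invoke that lemma is exactly why the conclusion is stated for $\hat U\cap\interior X$ and not for $\hat U$ itself: near a boundary point the fibers of $\pi$ can fail to vary continuously, and the transport step would break down.
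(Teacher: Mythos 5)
Your argument is correct and takes essentially the same route as the paper: both pick a point $x'\in[z]\cap U$ and use Lemma \ref{lem:Hausdorffcont} to transport it to nearby points of the fibers $[z_i]$ of an approximating sequence, forcing those fibers to meet $U$. The paper phrases this directly as sequential openness rather than by contradiction, but the content is identical.
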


\begin{proof}
 Let $x$ be a point in $\hat U\cap \interior X$ and $(x_i)$ a sequence with $x_i\to x$. Choose a point $x'\in[x]\cap U$. 
 By Lemma \ref{lem:Hausdorffcont},
 there is a sequence $(x'_i)$ converging to $x'$ and such that $x'_i\in[x_i]$ for large $i$. Therefore, for large $i$
 we have $x_i \in \hat U\cap \interior X$ and thus $\hat U\cap \interior X$ is open. 
\end{proof}

Next we establish the local product structure around regular points.

\begin{lemma} \label{lem:fiber_structure}
 Let $x$ be a regular point in $X$. Then there exists some $r>0$ such that for any $y\in B_r(x)$ there exists some $z \in Z_x$
 such that  $[y]\cap Z_x= \{z\}$.
\end{lemma}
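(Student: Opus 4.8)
The plan is to exploit the local product structure at the regular point $x$ given by the tangent-cone splitting $T_x = C_x' \times H_x$, where $C_x' = T_x Z_x$ and $H_x = T_x[x]$ by Lemma \ref{lem:alex_dim}. First I would choose $r>0$ small enough that $B_{3r}(x)$ lies in the interior of $X$ and that $B_{2r}(x)$ is pointed bi-Lipschitz (indeed, using Perelman's theorem, homeomorphic) to a neighbourhood of the apex of $T_x$; since $x$ is regular, $T_x$ splits as the stated metric product, and the convex subsets $Z_x$ and $[x]$ are, near $x$, the two factor slices. By Corollary \ref{cor:open_iso_emb}, $[y]\cap B_{2r}(y)$ is an $m$-dimensional Euclidean ball for every $y$ in the interior, so the fibers $[y]$ are genuine flat $m$-disks locally, and the fiber $Z_x$ is a convex Alexandrov subspace of dimension $n-m$ with $x$ a regular point of it.

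The core of the argument is to show that near $x$ each fiber $[y]$ meets $Z_x$ in exactly one point. For uniqueness, suppose $z_1,z_2\in [y]\cap Z_x$ with $z_1\neq z_2$. Then $z_1,z_2$ lie in the same fiber, so $\tilde d(z_1,z_2)=0$, i.e. $d(z_1,z_2)=\|F(z_1)-F(z_2)\|$; but both lie in $Z_x=F^{-1}(F(x))$, so $F(z_1)=F(z_2)=F(x)$, forcing $d(z_1,z_2)=0$, a contradiction. (This part needs no regularity and no smallness.) For existence, the natural move is to use that $F$ restricted to $[y]\cap B_{2r}(y)$ is an isometry onto a Euclidean ball $B_{2r}(F(y))\subset H$ (Corollary \ref{cor:open_iso_emb}), that $F(x)$ is within distance $d(x,y)<r$ of $F(y)$, and hence $F(x)$ lies in the interior of the image ball; pulling $F(x)$ back along the fiber gives a point $z\in [y]$ with $F(z)=F(x)$, i.e. $z\in[y]\cap Z_x$. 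One must check $z$ actually lies in the chosen small ball and that the pullback geodesic in $[y]$ stays in the interior of $X$ — this is exactly the kind of argument run in the proof of Lemma \ref{lem:alex_dim}, $(ii)$, and in Corollary \ref{cor:open_iso_emb}, so it should go through once $r$ is chosen suitably (say $r$ so small that $B_{3r}(x)\subset \interior X$ and one works with $[y]\cap B_{2r}(y)$, noting $d(F(y),F(x))<r<2r$).

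The main obstacle I anticipate is not uniqueness or the Euclidean pullback, but controlling \emph{that the candidate point $z$ lies in $B_r(x)$ and that the whole picture is consistent as $y$ varies} — in other words, making the local trivialization $B_r(x)\cap \interior X \cong (Z_x\cap B_?(x))\times (m\text{-ball})$ precise enough that the fiber through $y$ hits $Z_x$ and no more. I would handle this with Lemma \ref{lem:Hausdorffcont}: the fibers $[y]$ vary continuously in the Hausdorff metric on balls around $x$, so for $y$ close to $x$ the flat disk $[y]\cap \overline{B_r(x)}$ is a small perturbation of $[x]\cap \overline{B_r(x)}$, which meets $Z_x$ transversally in the single point $x$ (transversality being the tangent-cone splitting at the regular point $x$). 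Combining the continuity of fibers, the flatness of each $[y]$, and the convexity of $Z_x$, one concludes that for $r$ small every $[y]$ with $y\in B_r(x)$ meets $Z_x$, and by the uniqueness argument above it meets it in exactly one point $z$. A clean way to package existence is: the map $y\mapsto$ (endpoint of the geodesic in $[y]$ pointing in the $-\,d_yF$-preimage direction toward $F(x)$) is well-defined and lands in $Z_x$, with the smallness of $r$ guaranteeing everything stays in the interior of $X$.
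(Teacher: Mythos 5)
Your proof is correct, but the existence part runs along a genuinely different and more elementary route than the paper's. The paper assembles the local trivialization $\phi\colon D^{k}\times D^{m}\to U$, $(y,z)\mapsto \psi_y(z)$, proves it is continuous and injective, and invokes Brouwer's domain invariance to conclude that its image is an open neighbourhood of $x$; every point of a small ball then lies on some fiber $[y']$ with $y'\in D^k\subset Z_x$. You instead apply Corollary \ref{cor:open_iso_emb} directly \emph{at the point $y$}: choosing $r$ with $B_{3r}(x)\subset \interior X$, the restriction of $F$ to $[y]\cap B_{2r}(y)$ is a surjective isometry onto $B_{2r}(F(y))$, and since $\|F(x)-F(y)\|\leq d(x,y)<r$ the point $F(x)$ lies in that ball, so its unique preimage $z\in[y]\cap B_{2r}(y)$ satisfies $F(z)=F(x)$, i.e.\ $z\in [y]\cap Z_x$; uniqueness is the trivial computation $d(z_1,z_2)^2=\tilde d(z_1,z_2)^2+\|F(z_1)-F(z_2)\|^2=0$. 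This avoids domain invariance and Lemma \ref{lem:Hausdorffcont} entirely, and in fact only uses that $x$ is an interior point, so it weakens the hypothesis of the lemma; what it does not deliver is the local product homeomorphism $\phi$ itself, which is the thematic content of the subsection (though, as far as the later applications in Lemma \ref{lem:pi_open} and Corollary \ref{cor:i_open} go, only the statement of the lemma is used). Two small remarks: your worry in the last paragraph about ``$z$ lying in $B_r(x)$'' is moot, since the statement only asks for $z\in Z_x$ (and in any case $z\in B_{2r}(y)\subset B_{3r}(x)$ automatically), so the transversality/Hausdorff-continuity machinery you sketch there is not needed; and your second paragraph already constitutes a complete proof.
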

\begin{proof} Since $x$ is regular in $X$ a small neighborhood $U$ of $x$ in $X$ is homeomorphic to 
$\R^n$. By Lemma \ref{lem:alex_dim} the point $x$ is also regular in $Z_x\cap U$. Let $D^k$ be a small 
open disk in $Z_x$ around $x$ whose closure is a closed disk contained in $Z_x \cap U$. The inclusion $\varphi: \overline{D}^k \To U$ is a topological embedding. By Corollary \ref{cor:open_iso_emb}
 we can choose $U$ and $D^k$ in such a way that  $\bigcap_{y\in \overline{D}^k}F(U\cap[y])$
 contains a closed disk $\overline{D}^m$ around $F(x)$ in $H$ where $m=\dim H =n-k$. Set 
$\psi_y = ((F_{|[y]})^{-1})_{|\overline{D}^m} : \overline{D}^m \To U$. By Lemma \ref{lem:Hausdorffcont}
 $D^k \ni y\mapsto \mathrm{im}(\psi_y)$ is continuous with respect to the Hausdorff metric. Moreover, 
for all $y \in D^k$ we have $F \circ \psi_y=\mathrm{id}_{\overline{D}^m}$. Consider the map
\[
			\phi: D^k \times D^m \To U, \; (y,z) \mapsto \psi_y(z).
\]
By shrinking $D^k$ and $D^m$ if necessary, we can assume that the image of $\phi$ is contained in a compact neighborhood $K$ of
 $x$ with $K\subset U$. We claim that $\phi$ is continuous. Indeed, suppose $(y_i,z_i)$ is a sequence 
in $D^k \times D^m$ converging to $(y,z) \in D^k \times D^m$ such that $\phi(y_i,z_i)$ does not 
converge to $\phi(y,z)$. We can assume that $\phi(y_i,z_i)$ converges to a point $p \in U$ distinct
 from $\phi((y,z))$. By continuity of $D^k \ni y\mapsto \mathrm{im}(\psi_y)$ we have $p \in \mathrm{im}(\psi_y)$.
 By continuity of $F$ we have
\[
	F(p)=\lim_{i \rightarrow \infty} F(\phi(y_i,z_i))= \lim_{i \rightarrow \infty} F(\psi_{y_i}(z_i)) = \lim_{i \rightarrow \infty} z_i = z = F(\phi(y,z)).
\]
This contradicts the injectivity of $F$ on $[y]$. Since the map $\phi$ is also injective, its image 
is open by the domain invariance theorem. The claim follows.
\end{proof}

\begin{lemma}\label{lem:Zclosed}
 For any $x\in X$ the image of $Z_x$ under $\pi:X\to Y$ is closed.
\end{lemma}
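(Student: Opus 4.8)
The plan is to prove that $\pi(Z_x)$ is sequentially closed in the metric space $Y$ (recall $Y$ is geodesic, hence metric). So I would start with a sequence $(w_i)$ in $Z_x$ such that $\pi(w_i)$ converges in $Y$ to a point $q$, and produce a point $z\in Z_x$ with $\pi(z)=q$. Since $\pi:X\To Y$ is surjective by construction, pick $y\in X$ with $\pi(y)=q=[y]$; then $\tilde d(w_i,y)\to 0$ by definition of the quotient metric on $Y=X/\tilde d$.

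The key point is that $F=E_o$ takes the single value $F(x)$ on the fiber $Z_x$: we have $F(w_i)=F(x)$ for all $i$, so $\|F(w_i)-F(y)\|=\|F(x)-F(y)\|=:c$ is independent of $i$. From the defining formula $\tilde d(w_i,y)^2=d(w_i,y)^2-\|F(w_i)-F(y)\|^2=d(w_i,y)^2-c^2$ we get $d(w_i,y)^2\to c^2$; in particular the sequence $(w_i)$ is bounded in $X$. Since a finite-dimensional Alexandrov space with curvature bounded from below is proper (closed balls are compact), we may pass to a subsequence and assume $w_i\to z$ for some $z\in X$.

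It remains to identify $z$. Since $F=E_o$ is $1$-Lipschitz, hence continuous, $F(z)=\lim_i F(w_i)=F(x)$, so $z\in Z_x$. Since $\tilde d$ is continuous on $X\times X$ (being built from $d$ and $F$, both continuous), $\tilde d(z,y)=\lim_i\tilde d(w_i,y)=0$, i.e. $[z]=[y]=q$. Therefore $q=\pi(z)\in\pi(Z_x)$, which shows $\pi(Z_x)$ is closed.

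I do not expect a genuine obstacle here. The only step needing care is the boundedness of $(w_i)$, and this rests precisely on the fact that $F$ is constant equal to $F(x)$ along $Z_x$ — otherwise $d(w_i,y)$ would not be controlled by $\tilde d(w_i,y)$ and the argument would break down. Everything else is a routine use of properness of $X$ and continuity of $F$ and $\tilde d$.
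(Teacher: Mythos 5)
Your proof is correct, and it rests on the same key observation as the paper's: since $F=E_o$ is constant on the fiber $Z_x$, the term $\|F(\cdot)-F(\cdot)\|$ is controlled and $\tilde d$ recovers $d$ along $Z_x$. The paper packages this in two lines: $\pi|_{Z_x}$ is an isometric embedding (because $F$ is constant on $Z_x$, distances between points of $Z_x$ are preserved by $\pi$), and $Z_x$ is closed in the complete space $X$, hence complete, so its isometric image is complete and therefore closed in $Y$. Your version unwinds this into a sequential argument and invokes properness of $X$ (local compactness of finite-dimensional Alexandrov spaces plus Hopf--Rinow) to extract a convergent subsequence; the paper's route needs only completeness. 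You correctly flag the boundedness of $(w_i)$ as the one delicate step. Note that you could streamline it: applying your computation to pairs $w_i,w_j$ gives $d(w_i,w_j)=\tilde d(w_i,w_j)$, so $(w_i)$ is actually Cauchy in $X$ and converges by completeness alone, which removes the appeal to properness and essentially reproduces the paper's argument.
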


\begin{proof}
 The restriction $\pi|_{Z_x}$ is an isometric embedding. The claim follows since $Z_x\subset X$ is closed. 
\end{proof}

\begin{lemma} \label{lem:pi_open}
 Let $x$ be a regular point in $X$. Then there exists some $r>0$ such that the map $\pi$ restricts 
to an isometry $B_r(x)\cap Z_x\to B_r(\pi(x)) \subset Y$.
\end{lemma}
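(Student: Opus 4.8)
The plan is to reduce the statement to the assertion that every fiber close to $\pi(x)$ already meets $Z_x$ close to $x$, i.e.\ that $\pi$ is open at $x$. Since $E_o$ is constant on $Z_x$, the pseudometric of Proposition \ref{prp:pseudometric} restricts to $d$ on $Z_x$, so $\pi|_{Z_x}$ is an isometric embedding (as recorded in the proof of Lemma \ref{lem:Zclosed}); as $Z_x$ is convex, $B_r(x)\cap Z_x$ carries the metric induced from $X$ and is mapped by $\pi$ isometrically onto $\pi(Z_x)\cap B_r(\pi(x))$. Hence it suffices to find $r>0$ such that every fiber $[q]$ with $\tilde d(\pi(x),[q])<r$ contains a point of $Z_x$ at distance $<r$ from $x$; then $\pi(Z_x\cap B_r(x))=B_r(\pi(x))$ and we are done. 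Since $x$ is regular it lies in $\interior X$, so $T_x=C_x'\times H_x$ with $d_xE_o$ equal to the projection onto $H_x$ followed by an isometry, and $H_x=T_x[x]$ by Lemma \ref{lem:alex_dim}; fix $D>0$ with $B_D(x)\subseteq\interior X$ (any $D>0$ if $\partial X=\emptyset$, shrinking $D$ for comparison purposes) for which, by Corollary \ref{cor:open_iso_emb}, $[x]\cap B_D(x)$ is isometric to a round Euclidean ball centred at $E_o(x)$.

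The core step assumes a fiber $[q]$ with $d(x,[q])<D$ is given. The infimum is attained (since $[q]$ is closed and $X$ is proper) at some $q'\in[q]$, and then $q'\in B_D(x)\subseteq\interior X$. By Corollary \ref{cor:boundary_inclusion}, $q'$ lies in the interior of the convex subset $[q]$, which by Lemma \ref{lem:alex_dim} is flat of dimension $m=\dim\Aff$ with $T_{q'}[q]=H_{q'}$; thus $\Sigma_{q'}[q]$ is the full unit sphere of $H_{q'}$. Minimality of $q'$ together with the first variation formula \cite[Sect.~3.6]{Plaut} gives $\langle\xi,\eta\rangle\le 0$ in $T_{q'}$ for the direction $\xi$ of any geodesic $[q'x]$ and every $\eta\in\Sigma_{q'}[q]$. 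In the splitting $T_{q'}=C_{q'}'\times H_{q'}$ the $H_{q'}$-component of $\xi$ is, up to the fixed isometry, $d_{q'}E_o(\xi)$; since every $\eta\in\Sigma_{q'}[q]$ is a pure $H_{q'}$-direction, choosing $\eta$ parallel to that component forces $d_{q'}E_o(\xi)=0$. As $E_o\circ[q'x]$ is affine, this yields $E_o(x)=E_o(q')$, so $q'\in Z_x$ and $\tilde d(\pi(x),[q])=\tilde d(x,q')=d(x,q')<D$, with $\pi(q')=[q]$.

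The main obstacle is therefore to rule out that a fiber $[q]$ with $\tilde d(\pi(x),[q])$ arbitrarily small nonetheless has $d(x,[q])\ge D$: if so its closest point to $x$ could lie on $\partial X$, where the tangent cone of the fiber is only a half-space and the argument of the previous paragraph breaks. To exclude this, suppose such fibers exist; take $q\in[q]$, set $\delta:=d(x,q)\ge D$, and let $\gamma$ be a unit-speed geodesic from $x$ to $q$. Since $E_o$ is affine and $1$-Lipschitz, $\|d_xE_o(\gamma^+(0))\|=\|E_o(x)-E_o(q)\|/\delta=\sqrt{1-\tilde d(\pi(x),[q])^2/\delta^2}\to 1$, so, $d_xE_o$ being the projection onto $H_x$ up to an isometry, the angle $\theta$ between $\gamma^+(0)$ and $H_x$ tends to $0$. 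Let $\mu\colon[0,D/2]\to[x]$ be the unit-speed geodesic (minimizing in $X$, since $[x]$ is convex and locally a Euclidean ball around $x$) with $\mu^+(0)$ the unit vector of $H_x$ closest to $\gamma^+(0)$. Then $\gamma|_{[0,D/2]}$ and $\mu$ are minimizing geodesics issuing from $x$ with $\angle_x(\gamma,\mu)=\theta\to 0$, so Toponogov's hinge comparison forces $d(\gamma(D/2),\mu(D/2))\to 0$. But $\mu(D/2)\in[x]$ gives $E_o(\mu(D/2))=E_o(x)$, while $\|E_o(x)-E_o(\gamma(D/2))\|=(D/2)\,\|E_o(x)-E_o(q)\|/\delta\to D/2$, contradicting that $E_o$ is $1$-Lipschitz. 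Hence there is $\epsilon\in(0,D)$ such that $\tilde d(\pi(x),[q])<\epsilon$ implies $d(x,[q])<D$; combining with the core step, $\pi(Z_x\cap B_\epsilon(x))=B_\epsilon(\pi(x))$ and $\pi$ restricts to an isometry $B_\epsilon(x)\cap Z_x\to B_\epsilon(\pi(x))$, so $r=\epsilon$ works. (The only use of regularity is to put $x$ in $\interior X$; one could alternatively feed the product structure of Lemma \ref{lem:fiber_structure} around the regular point $x$ into the final step, but some such input is needed to see that nearby fibers return to $Z_x$.)
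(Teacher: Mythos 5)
Your reduction and your ``core step'' are sound: $\pi|_{Z_x}$ is an isometric embedding, and the first-variation argument at the closest point $q'$ of a fiber $[q]$ with $d(x,[q])<D$ does show $d_{q'}E_o(\xi)=0$ and hence $q'\in Z_x$ (a nice alternative way of seeing that fibers passing \emph{$X$-close} to $x$ meet $Z_x$; the paper gets this instead from the local product structure of Lemma \ref{lem:fiber_structure}). The gap is in your third paragraph, which is precisely the crux of the lemma: ruling out fibers that are close to $\pi(x)$ in $Y$ but far from $x$ in $X$. Your contradiction rests on the assertion that $\mu(D/2)\in[x]$ forces $E_o(\mu(D/2))=E_o(x)$. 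That is false: $E_o$ is constant on $Z_x$, but on the class $[x]$ it is an \emph{isometric embedding} (Lemma \ref{lem:prop1}, $(ii)$), so in fact $\|E_o(\mu(D/2))-E_o(x)\|=D/2$. You have swapped the two kinds of fibers. With the correct value there is no contradiction at all: both $\gamma(D/2)$ and $\mu(D/2)$ have $E_o$-image at distance roughly $D/2$ from $E_o(x)$, and the configuration you are trying to exclude --- a geodesic from $x$ to a $Y$-close but $X$-far fiber hugging the flat fiber $[x]$ --- is perfectly consistent with everything established up to that point. (Running $\mu$ inside $Z_x$ instead does not rescue the argument: by Lemma \ref{lem:alex_dim}, $(i)$, its direction would lie in $C_x'$, so the hinge angle with $\gamma^+(0)$ would be close to $\pi/2$ rather than to $0$.)

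Thus the implication ``$\tilde d(\pi(x),[q])$ small $\Rightarrow$ $d(x,[q])$ small'' remains unproved, and it is the hard part of the lemma. The paper obtains it by an open--closed argument along the geodesic $[xp]$ to a hypothetical bad point $p$: the set of parameters where the fiber still meets $Z_x$ is nonempty (Lemma \ref{lem:fiber_structure}, via invariance of domain) and relatively closed (Lemma \ref{lem:Zclosed}), while Corollary \ref{cor:openextension} --- which rests on the Hausdorff continuity of fibers, Lemma \ref{lem:Hausdorffcont} --- makes it relatively open inside the interior of $X$; the only way the connectedness argument could fail is through a boundary point in the interior of $[xp]$, which is impossible for a geodesic issuing from the interior point $x$. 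Some input of this kind is genuinely needed, as you yourself suspect in your closing parenthesis; the comparison-geometry shortcut you propose does not supply it.
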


\begin{proof}
 Clearly the restriction of $\pi$ is isometric. By Lemma \ref{lem:fiber_structure} there is a radius $r>0$ such that for any $y\in B_r(x)$ we have 
$[y]\cap Z_x\neq\emptyset$. Set $B:=B_r(x)$ and define $\hat B$ as in Corollary \ref{cor:openextension}.
 Assume that there exists a point $p$ in $X-\hat B$
 such that $d(\pi(x),\pi(p))<r$. Choose $p'\in[xp]$ maximal such that $\pi([xp'))\subset\pi(Z_x)$. 
By Lemma \ref{lem:fiber_structure}, $p'\neq x$. By Lemma \ref{lem:Zclosed}, $\pi(p')$ is contained in
 $\pi(Z_x)$. Set $q':=\pi|_{Z_x}^{-1}(\pi(p'))$. Then $[q']=[p']$ and $d(x,q')=d(\pi(x),\pi(p'))\leq d(\pi(x),\pi(p))<r$, 
since $\pi$ is affine. It follows that $q'\in B$ and $p' \in \hat B$. In particular, $p' \neq p$. 
From the maximality, we conclude that $p'\in\partial X$, since $\hat B\cap\interior X$ is open by Corollary \ref{cor:openextension}.
This is a contradiction, since $x$ lies in the interior of $X$ and $p'$ lies in the interior of $[xp]$ (cf. Section \ref{sub:tangent_cones}).
Therefore, $\pi$ restricts to an isometry $B_r(x)\cap Z_x\to B_r(\pi(x))$. 
\end{proof}

The following corollary proves property $(d)$ in Theorem \ref{thm:char_affine}.

\begin{corollary} \label{cor:i_open}
 Let $x$ be a regular point in $X$. Then there exists some $r>0$ such that the embedding 
$i: X \To Y \times \R^m$ restricts to an isometry $B_r(x)\to B_r(i(x)) \subset Y \times \R^m$.
\end{corollary}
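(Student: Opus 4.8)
The corollary should follow by combining the two isometry statements we have already established for the two factors of the product $Y\times\R^m$, namely Corollary \ref{cor:open_iso_emb} (for the $[x]$-direction, which maps isometrically to a Euclidean ball in $H=\R^m$) and Lemma \ref{lem:pi_open} (for the $Z_x$-direction, which maps isometrically to a ball in $Y$), together with the local product structure of $X$ around a regular point from Lemma \ref{lem:fiber_structure}. Concretely, I would first choose $r>0$ small enough that $B_r(x)$ lies in the interior of $X$, that the conclusions of Lemma \ref{lem:fiber_structure} and Lemma \ref{lem:pi_open} both hold, and that $\pi$ restricts to an isometry $B_r(x)\cap Z_x\to B_r(\pi(x))\subset Y$ while $E_o$ (i.e. $p_H\circ i$) restricts to an isometry from each slice $[y]\cap B_r(x)$ onto a Euclidean ball. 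Shrinking $r$ further if necessary (using the Hausdorff-continuity of $y\mapsto [y]$ from Lemma \ref{lem:Hausdorffcont}), we can arrange that $B_r(x)$ is exactly the union of the slices $[y]\cap B_r(x)$ over $y\in B_r(x)\cap Z_x$, and that $i$ maps $B_r(x)$ into $B_r(i(x))$.

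**Surjectivity onto the product ball.** The nontrivial point is that $i(B_r(x))=B_r(i(x))$. For injectivity of $i$ there is nothing to do — $i$ is globally an isometric embedding by construction. For surjectivity, take $(\bar y,v)\in B_r(i(x))\subset Y\times\R^m$, so $d_Y(\bar y,\pi(x))^2+\|v-E_o(x)\|^2<r^2$. By Lemma \ref{lem:pi_open} there is a unique $z\in B_r(x)\cap Z_x$ with $\pi(z)=\bar y$, and one checks $d(x,z)=d_Y(\pi(x),\bar y)$ since $\pi|_{Z_x}$ is isometric and $z\in Z_x=Z_x$ forces $F(z)=F(x)$. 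Now apply Corollary \ref{cor:open_iso_emb} at the point $z$: since $B_r(x)$ is in the interior and (after possibly shrinking $r$) $B_{r'}(z)\subset\interior X$ for a suitable $r'$, the restriction $E_o|_{[z]\cap B_{r'}(z)}$ is an isometry onto a Euclidean ball around $F(z)=F(x)$, and $\|v-E_o(x)\|<r'$ places $v$ in its image; let $w\in[z]$ be the preimage. Then $d(z,w)=\|v-E_o(x)\|$, and using that $z\in Z_x$, $w\in[z]$ and the Pythagorean identity built into $\tilde d$, one gets $d(x,w)^2=d(x,z)^2+d(z,w)^2=d_Y(\pi(x),\bar y)^2+\|v-E_o(x)\|^2<r^2$, so $w\in B_r(x)$, while $\pi(w)=\pi(z)=\bar y$ (as $w\sim z$) and $F(w)=v$. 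Hence $i(w)=(\bar y,v)$.

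**Bookkeeping the radii, and the isometry property.** The main obstacle is purely the radius juggling: Corollary \ref{cor:open_iso_emb} and Lemma \ref{lem:pi_open} each produce \emph{some} radius, and the slice isometries are only guaranteed inside balls centered at the varying point $z$, not at $x$. I would handle this by first fixing $\rho>0$ with $\overline{B_{2\rho}(x)}\subset\interior X$, applying Lemma \ref{lem:fiber_structure} and Lemma \ref{lem:pi_open} at $x$ to get a radius $r_0\le\rho$, and then setting $r:=r_0/2$, say, so that for every $z\in B_r(x)\cap Z_x$ the ball $B_{r}(z)$ is contained in $B_{2\rho}(x)\subset\interior X$ and Corollary \ref{cor:open_iso_emb} applies at $z$ with radius $r$ — giving a Euclidean slice ball of radius $r$, which is all that is used above since $\|v-E_o(x)\|<r$. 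Finally, that the resulting bijection $i\colon B_r(x)\to B_r(i(x))$ is an \emph{isometry} (not merely a bijective isometric embedding onto a ball) is immediate: $i$ is a global isometric embedding, and its image is $B_r(i(x))$, which is an open ball in the product metric on $Y\times\R^m$; hence $i|_{B_r(x)}$ is a surjective distance-preserving map onto $B_r(i(x))$, i.e. an isometry. This completes the proof, and with it the proof of property $(d)$ in Theorem \ref{thm:char_affine}.
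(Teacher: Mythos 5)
Your argument is correct, but it follows a genuinely different route from the paper's. The paper's proof is purely topological: it chooses $r$ so that $B_r(x)\cong\R^n$ and $B_r(x)\cap Z_x\cong\R^{n-m}$ (using that $x$ is regular), notes via Lemma \ref{lem:pi_open} that $B_r(\pi(x))\subset Y$ is then also homeomorphic to $\R^{n-m}$, and concludes from the invariance of domain theorem that $i(B_r(x))$ is open, whence the ball statement follows after shrinking $r$. You instead give a direct metric construction of the inverse: given $(\bar y,v)\in B_r(i(x))$ you produce $z\in Z_x$ via the isometry of Lemma \ref{lem:pi_open}, then $w\in[z]$ via Corollary \ref{cor:open_iso_emb} applied at $z$, and verify $d(x,w)^2=d(x,z)^2+d(z,w)^2$ from the definition of $\tilde d$ (using $\tilde d(x,w)=\tilde d(x,z)=d(x,z)$ since $F(z)=F(x)$ and $\tilde d(z,w)=0$, and $\|F(w)-F(z)\|=d(z,w)$ since $F$ is isometric on $[z]$). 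This checks out, including the radius bookkeeping, and it buys something the paper's proof does not: it avoids invoking invariance of domain at this step (though it still enters indirectly through Lemma \ref{lem:fiber_structure} and Lemma \ref{lem:pi_open}), it exhibits the inverse map explicitly, and it does not actually use that $x$ is a regular point beyond what is needed for Lemma \ref{lem:pi_open}. The paper's proof is shorter but less explicit; yours is longer but more elementary and constructive. Your final step --- that a surjective distance-preserving map onto the ball is an isometry --- is correct as stated.
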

\begin{proof}
Choose $r>0$ small enough such that the conclusion of Lemma \ref{lem:pi_open} holds, such that
$B_r(x)$ is homeomorphic to $\R^n$ and such that $B_r(x)\cap Z_x$ is homeomorphic to $\R^{n-m}$. Then, from Lemma \ref{lem:pi_open}
we know that $\pi(B_r(x)\cap Z_x)$ is also homeomorphic to $\R^{n-m}$. Perhaps after decreasing $r$, the claim now follows from the invariance of domain theorem.
\end{proof}

\subsection{Curvature bound}

For $\kappa \geq 0$ the curvature bound for $Y$ can be easily established similarly as in
 \cite[5.3]{LS07} in the nonpositive curvature case. For general $\kappa$ we argue as follows.
 The set of regular points in $X$ is convex \cite[Corollary~1.10]{Pe98} and dense in 
$X$ \cite[Corollary~10.9.13]{MR1835418}. Let $Y_0\subset Y$ be its image under $\pi$. By continuity of $\pi$ this subset $Y_0$ is dense in $Y$. Moreover, by Lemma \ref{lem:pi_open} 
and the existence of small convex neighborhoods in $Z_x$ \cite[3.6]{Pe93} (cf. \cite[7.1.2]{Pe07}),
 each point in $Y_0$ has a neighborhood $U$ in $Y$ that is isometric to a convex subset of some $Z_x$. 
According to \cite{Pe15} it follows that the completion of $Y_0$, and hence the completion 
of $Y$, is an $\Alex^{k}(\kappa)$ space with $k= \dim X - \dim \Aff$ by Lemma \ref{lem:pi_open} and Lemma \ref{lem:alex_dim}.

\subsection{Uniqueness statements} It follows from property $(d)$ and the fact that geodesics in the Alexandrov space $Y \times \R^m$ do not branch that the extension in Theorem \ref{thm:char_affine}, $(c)$ is unique.

Suppose an embedding $i:X \To Y \times \R^m$ with the properties described 
in Theorem \ref{thm:char_affine} is given. We identify $\R^m$ with its dual via the metric $(-,-)$ of $\R^n$. This 
yields a homomorphism $\theta: \R^m\To \Aff(X)$ which is surjective by property $(b)$ and 
injective by property $(d)$. In fact, it is an isometry by property $(d)$ and the definition of the norm of $\Aff(X)$ as the optimal Lipschitz constant. In particular, the dimension $m$ is an isometry invariant of $X$. Set
 $F=p_H \circ i : X \To \R^m$. After composition with a translation of $\R^m$ we can assume that any point, say $o \in X$, is mapped to $0 \in \R^m$ by $F$. We claim that $F=\theta^{*}\circ E_o$. Indeed, 
let $x\in X$, let $f: X \To \R$ be a Lipschitz continuous affine function and let $\hat f: \R^m \To \R$ 
be given as in property $(b)$. Then
\[
		\left( F(x), \theta^{-1} ([f])\right)  = \hat f (F(x)) -\hat f(0)= f(x)-f(o)=E_o(x)([f])=\left(\left(\theta^* \circ E_o\right)(x),\theta^{-1} ([f]) \right)
\]
and so the claim follows since our choices were arbitrary. For $\pi=p_Y \circ i : X \To Y$ the metric on $Y$ must satisfy
\[d(\pi(y),\pi(z))= \sqrt{d(y,z)^2-||E_o(y)-E_o(z)||^2}.\]
By property $(a)$ the metric on $Y$ is completely determined by this identity. More precisely, by Proposition \ref{prp:pseudometric}
$\sqrt{d(y,z)^2-||E_o(y)-E_o(z)||^2} $ defines a pseudometric on $X$ with $Y$ being the induced metric space. Since this pseudometric on $X$ does not depend on the choice of $o$, we see that the space $Y$ is determined by $X$ up to isometry.

If $\tilde i: X \To \tilde Y \times \R^{\tilde m}$ is another embedding with the properties stated in Theorem \ref{thm:char_affine}, then we have seen that $m=\tilde m$ and $Y=\tilde Y$ up to isometry. Moreover, the maps $F=p_H \circ i$ and $\tilde F=p_H \circ \tilde i$ coincide after composition with a translation of $\R^m$. Therefore, also the maps $\pi=p_Y \circ i$ and $\pi=p_Y \circ \tilde i$ coincide as they are defined by the corresponding pseudometrics which in turn only depend on $F$. But this means that the embedding $i:X \To Y \times \R^m$ is uniquely determined up to composition with isometries as claimed.
\newline

\emph{Acknowledgements.} We would like to thank Vitali Kapovitch, Alexander Lytchak and Anton Petrunin for comments and discussions on different aspects of Alexandrov geometry. We would also like to thank the anonymous referee for his remarks that helped to improve the exposition. The first named author was partly supported by a `Kurzzeitstipendium für Doktoranden' by the German Academic Exchange Service (DAAD).

\end{document}